\newcommand{\arxiv}[1]{\href{http://arxiv.org/abs/#1}{\tt arXiv:\nolinkurl{#1}}}
\newcommand{\arXiv}[1]{\href{http://arxiv.org/abs/#1}{\tt arXiv:\nolinkurl{#1}}}
\newcommand{\googlebooks}[1]{(preview at \href{http://books.google.com/books?id=#1}{google books})}
\definecolor{dark-red}{rgb}{0.7,0.25,0.25}
\definecolor{dark-blue}{rgb}{0.15,0.15,0.55}
\definecolor{medium-blue}{rgb}{0,0,.8}
\definecolor{DarkGreen}{RGB}{0,150,0}
\definecolor{rho}{named}{red}
\theoremstyle{plain}
\newtheorem{thm}{Theorem}[section]
\newtheorem*{thm*}{Theorem}
\newtheorem{cor}[thm]{Corollary}
\newtheorem*{cor*}{Corollary}
\newtheorem{conj}[thm]{Conjecture}
\newtheorem*{conj*}{Conjecture}
\newtheorem*{lem*}{Lemma}
\newtheorem{lem}[thm]{Lemma}
\newtheorem{prop}[thm]{Proposition}
\newtheorem*{quest*}{Question}
\newtheorem*{claim*}{Claim}
\theoremstyle{definition}
\newtheorem{defn}[thm]{Definition}
\theoremstyle{remark}
\newtheorem{remark}[thm]{Remark}
\DeclareMathOperator{\tDiff}
{\mathrm{D}\!\widetilde{\,i\hspace{1.5mm}}\hspace{-1.5mm}\mathrm{ff}} 
\DeclareMathOperator{\Diff}{Diff}
\DeclareMathOperator{\Ann}{Ann}
\DeclareMathOperator{\tAnn}{A\widetilde{\!\!\phantom{i}n\phantom{i}\!\!}n}
\DeclareMathOperator{\Hom}{Hom}
\DeclareMathOperator{\Hol}{Hol}
\DeclareMathOperator{\id}{id}
\DeclareMathOperator{\Univ}{Univ}
\DeclareMathOperator{\Exp}{Exp}
\DeclareMathOperator{\Vir}{Vir}
\DeclareMathOperator{\Witt}{Witt}
\DeclareMathOperator{\supp}{supp}
\DeclareMathOperator{\Span}{Span}
\newcommand{\comment}[1]{}
\newcommand{\be}{\begin{enumerate}[label=(\arabic*)]}
\newcommand{\ee}{\end{enumerate}}
\newcommand{\DD}{\mathbb{D}}
\newcommand{\HH}{\mathbb{H}}
\newcommand{\CC}{\mathbb{C}}
\newcommand{\cC}{\mathcal{C}}
\renewcommand{\P}{\mathbb{P}}
\newcommand{\abs}[1]{\left| #1 \right|}
\def\semicolon{;}
\def\applytolist#1{
    \expandafter\def\csname multi#1\endcsname##1{
        \def\multiack{##1}\ifx\multiack\semicolon
            \def\next{\relax}
        \else
            \csname #1\endcsname{##1}
            \def\next{\csname multi#1\endcsname}
        \fi
        \next}
    \csname multi#1\endcsname}
\def\calc#1{\expandafter\def\csname c#1\endcsname{{\mathcal #1}}}
\def\bbc#1{\expandafter\def\csname bb#1\endcsname{{\mathbb #1}}}
\def\bfc#1{\expandafter\def\csname bf#1\endcsname{{\mathbf #1}}}
\def\sfc#1{\expandafter\def\csname s#1\endcsname{{\sf #1}}}
\newcommand{\noshow}[1]{}
\newcommand{\MR}[1]{}
\tikzset{
	super thick/.style={line width=3pt}
}
\tikzstyle{shaded}=[fill=red!10!blue!20!gray!30!white]
\tikzstyle{unshaded}=[fill=white]
\tikzstyle{empty box}=[circle, draw, thick, fill=white, opaque, inner sep=2mm]
\tikzstyle{annular}=[scale=.7, inner sep=1mm, baseline]
\tikzstyle{rectangular}=[scale=.75, inner sep=1mm, baseline=-.1cm]
\tikzstyle{mid>}=[decoration={markings, mark=at position 0.5 with {\arrow{>}}}, postaction={decorate}]
\tikzstyle{mid<}=[decoration={markings, mark=at position 0.5 with {\arrow{<}}}, postaction={decorate}]
\tikzstyle{over}=[double, draw=white, super thick, double=]
\title{The Segal-Neretin semigroup of annuli}
\author{Andr\'e G. Henriques, James E. Tener}
\date{}
\begin{document}

\maketitle

\abstract{
The Lie algebra of vector fields on $S^1$ integrates to the Lie group of diffeomorphisms of $S^1$.
But it is well known since the work of Segal and Neretin that there is no Lie group whose Lie algebra is the complexification of vector fields on $S^1$.
A substitute for that non-existent group is provided by the complex semigroup whose elements are \emph{annuli}: genus zero Riemann surfaces with two boundary circles parametrised by $S^1$.
The group $\mathrm{Diff}(S^1)$ sits at the boundary of that semigroup, and can be thought of as annuli which are completely ``thin'', i.e. with empty interior.

In this paper, we consider an enlargement of the semigroup of annuli, denoted $\Ann$, where the annuli are allowed to be ``partially thin'': their two boundary circles are allowed to touch each other along an arbitrary closed subset.
We prove that every (partially thin) annulus $A\in \Ann$ is the time-ordered exponential of a path with values in the cone of inward pointing complexified vector fields on $S^1$, and use that fact to construct a central extension
\[
0\to \bbC \times \bbZ \to \tAnn \to \Ann \to 0
\]
that integrates the universal (Virasoro) central extension of $\cX(S^1)$.

In later work, we will prove that every unitary positive energy representations of the Virasoro algebra integrates to a holomorphic representation of $\tAnn$ by bounded operators on a Hilbert space.
}

\setcounter{tocdepth}{1}
\tableofcontents

\section{Introduction}

The Virasoro Fr\'echet Lie algebra
\begin{align*}
&\Vir := \textstyle \bigg\{\sum_{n\in \bbZ} a_n L_n+ kC \,\bigg|\, 
\begin{aligned}
&a_n, k \in \bbC\\[-1mm]
&\text{$a_n$ is rapidly decreasing as $|n|\to\infty$}
\end{aligned}
\bigg\}
\intertext{with Lie bracket}
&[L_m,L_n] = (m-n)L_{n+m} + \frac{C}{12} (m^3-m) \delta_{n+m,0}.
\end{align*}
is famously known not to admit a Lie group that exponentiates it, even though
its real subalgebra $\Vir_\bbR := \Span(\{L_n - L_{-n},iL_n + iL_{-n}\}, iC)$ does exponentiate to a Lie group \cite{SegalDef,Neretin90}.
Specifically, $\Vir_\bbR$ exponentiates to a central extension $\tDiff(S^1)$ of $\Diff(S^1)$ known as the Bott-Virasoro group \cite{Bott77,FrenkelKim21ax,DebrayLiuWeis23}.
The Witt algebra
\[
\Witt := \Span(\{L_n\}_{n\in \mathbb Z}) = \Vir / \Span\{C\}
\]
is the complexification of the Lie algebra of $\Diff(S^1)$.
It exhibits the same feature of not admitting a Lie group that exponentiates it.

Remarkably, there does exist a semigroup that contains $\Diff(S^1)$ as its subset of invertible elements, and that behaves in many ways like the non-existent Lie group that exponentiates the Witt algebra.
Equivalently, it is the non-existent complexification of $\Diff(S^1)$.

That semigroup is known as the \emph{Segal-Neretin semigroup of annuli}.
It was independently discovered by Graeme Segal \cite{SegalDef} and Yuri Neretin \cite{Neretin90} in the eighties, while investigating two-dimensional conformal field theory (CFT).
Elements of the semigroup are isomorphism classes of complex annuli equipped with parametrisations of the boundary components by the unit circle $S^1 \subset \bbC$.
Composition is given by gluing (``conformal welding'') along the parametrisations.

In Segal's approach to CFT, the two-dimensional complex cobordism category $\mathrm{Cob}_2^{\mathit{conf}}$ is a central object of study, and CFTs are defined as functors out of that cobordism category.
The semigroup of annuli then naturally appears as the topologically trivial part of the complex cobordism category.
More precisely, the manifold $S^1$ is an object of the complex cobordism category.
The space $\Hom_{\mathrm{Cob}_2^{\mathit{conf}}}(S^1,S^1)$ splits as a disjoint union according to genus and number of connected components of the cobordism, and
the semigroup of annuli is the trivial connected component:
\[
\Ann^\circ \subset 
\Hom_{\mathrm{Cob}_2^{\mathit{conf}}}(S^1,S^1).
\]
In Segal's original formulation, the complex cobordism category $\mathrm{Cob}_2^{\mathit{conf}}$ did not have identity morphisms.
Identity morphisms can be formally added, and geometrically interpreted as annuli with thickness zero.

In this paper, we construct a version of the semigroup of annuli which contains annuli that are allowed to be `partially thin', meaning that the two boundary circles are allowed to intersect along an arbitrary closed subset.
Given a Jordan curve $\gamma$ (a parametrised closed simple curve $\gamma:S^1\to \bbC$ with winding number $1$), let us write $\mathrm{Int}(\gamma)\subset \bbC$ for the bounded component of the complement of the curve.

\begin{defn}
The semigroup of annuli $\Ann$ is the set of equivalence classes of pairs of smooth Jordan curves $\gamma_{in},\gamma_{out}:S^1\to \bbC$ satisfying $\mathrm{Int}(\gamma_{in})\subseteq \mathrm{Int}(\gamma_{out})$, where $(\gamma_{in},\gamma_{out})$ is deemed equivalent to $(\gamma'_{in},\gamma'_{out})$ if there exists a diffeomorphism $f:\bbC\to \bbC$ such that $f\circ \gamma_{in/out}=\gamma'_{in/out}$, and the restriction of $f$ to $\mathrm{Int}(\gamma_{out}) \setminus \overline{\mathrm{Int}(\gamma_{in})}$ is holomorphic.
\end{defn}\vspace{-.3cm}

The following is an example of an annulus:\hspace{1.3cm}
$
\tikz[baseline=0, yscale=.7]{
\draw[thick, blue, fill=gray!25] (0,0) circle (1.3);
\draw[thick, red, fill=white] (-50:1.273) arc (-50:50:1.273) to[in=-140, out=140, looseness=1.5] (-50:1.273);
\node at (-.65,-.2) {$A$};
\node[red, scale=1.05] at (.65,-.35) {$\scriptstyle \partial_{in}A$};
\node[blue, scale=1.05] at (1.4,-1.25) {$\scriptstyle \partial_{out}A$};
}
$\vspace{-.3cm}

\noindent
We also construct a central extension
\[
0\to \bbC \times \bbZ \to \tAnn \to \Ann \to 0
\]
of the semigroup of annuli which integrates the Virasoro central extension of the Witt algebra, and which we conjecture to be universal.
The following chart summarises the various Lie groups and semigroups mentioned above (on the left), along with their corresponding Lie algebras (on the right):

\[\begin{tikzcd}
&\tAnn \arrow[rr, -, dotted] \arrow[dd, ->>]  & & \Vir \arrow[dd, ->>] \\
\tDiff(S^1) \arrow[rr, -, dotted]\arrow[dd, ->>]\arrow[ur, hook] & & \Vir_\bbR \arrow[dd, ->>]\arrow[ur, hook]&\\
&\Ann \arrow[rr, -, dotted]  & & \Witt \\
\Diff(S^1) \arrow[rr, -, dotted]\arrow[ur, hook] & & \Witt_\bbR \arrow[ur, hook]&
\end{tikzcd}
\]
Both the semigroup of annuli $\Ann$ and its central extension $\tAnn$ are complex semigroups, meaning their tangent spaces are complex vector spaces, and the underlying spaces holomorphically embed into complex Fr\'echet vector spaces. But since neither $\Ann$ nor its central extension are manifolds (they are some kind of infinite dimensional manifolds with corners), formulating this requires care. In this paper, we rely on the technology of complex diffeological spaces, in which a `complex structure' is determined by the set of all holomorphic maps from finite-dimensional complex manifolds.

A drawback of this approach is that the complex diffeology `cannot see' the tangent spaces to $\Ann$ at non-interior points.
So we complement this with another diffeology, now based on smooth maps from finite-dimensional real manifolds with corners into $\Ann$. We then prove that both $\Ann$ and its central extension are diffeological semigroups for these two flavours of diffeology. Specifically, we prove that the semigroup law, provided by conformal welding, is compatible with the diffeologies.

The relationship between the semigroup of annuli and its Lie algebra requires some care,
as the exponential map is somewhat poorly behaved.
Even in the context of finite-dimensional Lie groups $G$, it is well-known that the exponential map $\mathfrak{g} \to G$ sometimes fails to be surjective. For infinite-dimensional Lie groups, the exponential map may even fail to be locally surjective\footnote{An example of a diffeomorphism not in the image of $\mathrm{exp}:\Witt \to \Diff(S^1)$ is a fixed-point free diffeomorphism which admits both finite orbits and infinite orbits. Those can be arranged to be as close to the identity as one wishes.}.
A fix for this lack of surjectivity is to consider the time-dependent exponential, which is a map
\begin{equation}
\label{eq: path in G}
\begin{split}
C^\infty(&[0,1],\mathfrak{g}) \longrightarrow G\\
&X  \mapsto   \prod_{1\ge \tau\ge 0} \Exp\big(X(\tau)d\tau\big)
\end{split}
\end{equation}
specified by
\begin{align*}
&\prod_{0 \ge \tau\ge 0} \Exp\big(X(\tau)d\tau\big) = 1 \\
&\frac{d}{dt} \bigg(\prod_{t \ge \tau\ge 0} \Exp\big(X(\tau)d\tau\big)\bigg) = X(t).\bigg(\prod_{t \ge \tau\ge 0} \Exp\big(X(\tau)d\tau\big)\bigg)
\end{align*}
where $X.g$ denotes the translation of $X \in \mathfrak{g}$ by the map which performs right multiplication by $g \in G$.

In $\Ann$, the tangent space at the identity is the cone $\Witt^{in} \subset \Witt$ of inwards pointing vector fields (possibly tangential to $S^1$):
\[
\Witt^{in} = \big\{ f(\theta) \tfrac{\partial}{\partial\theta} \in \Witt \,\big|\, \mathrm{Im}(f(\theta))\ge 0\big\}.
\]
Letting $\cP$ denote the set of smooth maps $[0,1]\to \Witt^{in}$, an annulus $A \in \Ann$ is the time-dependent exponential of a path $X \in \cP$ if there exists a smooth surjective map $h:S^1 \times [0,1] \to A$ such that $X = - \frac{\partial_t h(\theta,t)}{\partial_\theta h(\theta,t)}$.
We say that a path $X \in \cP$ is \emph{geometrically exponentiable} if such an annulus $A$ and map $h$ exist, in which case we write $A=\prod_{1\ge \tau\ge 0} \Exp\big(X(\tau)d\tau\big)$.
One of our main results is that the time-dependent exponential map is surjective:
\begin{thm}
For every annulus $A \in \Ann$ there exists a geometrically exponentiable path $X \in \cP$ such that
\[
A=\prod_{1\ge \tau\ge 0} \Exp\big(X(\tau)d\tau\big).
\]
\end{thm}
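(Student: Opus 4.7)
The plan is to produce $X$ geometrically, by constructing a smooth surjective sweeping map $h\colon S^1\times[0,1]\to A$ with $h(\cdot,0)=\gamma_{in}$ and $h(\cdot,1)=\gamma_{out}$, whose intermediate slices $\gamma_t:=h(\cdot,t)$ foliate the interior of $A$ and degenerate onto $\partial A$ on the closed touching set $K:=\{\theta\in S^1:\gamma_{in}(\theta)=\gamma_{out}(\theta)\}$. Setting $X(t)(\theta):=-\partial_t h(\theta,t)/\partial_\theta h(\theta,t)$, the inward-pointing condition becomes a short geometric check: since the inward normal to the counterclockwise-oriented curve $\gamma_t$ is $i\partial_\theta h$ and the sweep is outward (i.e.\ $\mathrm{Int}(\gamma_t)$ is increasing in $t$), the component of $\partial_t h$ in the direction $i\partial_\theta h$ is non-positive, so $\partial_t h/\partial_\theta h$ has non-positive imaginary part, and hence $X\in\Witt^{in}$.

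For the non-degenerate case, in which $\gamma_{in}$ and $\gamma_{out}$ are disjoint, $A$ is biholomorphic to a round annulus $\{1\le|z|\le R\}$, and smoothness of the boundary curves (via Kellogg-type boundary regularity for conformal maps) extends the biholomorphism to a diffeomorphism of manifolds with boundary. Pulling back the standard foliation $\{|z|=r\}$ produces a smooth foliation of $A$; rescaling the radial parameter to $[0,1]$ and post-composing each leaf by a smoothly varying $S^1$-diffeomorphism matching the given parametrisations of $\gamma_{in}$ and $\gamma_{out}$ at the two endpoints yields the desired $h$.

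The partially thin case is the principal obstacle. Here the interior of $A$ decomposes as a disjoint union of simply connected ``lens-shaped'' open regions, one for each connected component $I$ of $S^1\setminus K$, each bounded by subarcs $\gamma_{in}|_I$ and $\gamma_{out}|_I$ meeting at the two endpoints of $I$. On each lens I would use the Riemann mapping theorem to obtain a conformal equivalence with a standard bigon (e.g.\ $\DD$ with two marked boundary points), transport a natural foliation of the model by arcs connecting those marked points (e.g.\ horizontal chords in a half-plane model), and then extend by $h(\theta,t):=\gamma_{in}(\theta)$ for $\theta\in K$. The hard step is to choose the foliations on the lenses carefully enough that the resulting $h$ is globally smooth on $S^1\times[0,1]$, especially across the cusps in $\partial K$; this requires an asymptotic analysis of the conformal uniformisation near each cusp (where the contact order of $\gamma_{in}$ and $\gamma_{out}$ enters) and a coordinated choice of parametrisation on adjacent lenses so that the foliating curves $\gamma_t$ approach $\gamma_{in}=\gamma_{out}$ smoothly as $\theta\to K$.

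Once $h$ is smooth, verifying that $X$ is a smooth path into $\Witt^{in}$ is automatic: off $K\times[0,1]$ this follows from non-vanishing of $\partial_\theta h$, while on $K\times[0,1]$ the numerator $\partial_t h$ vanishes (since $h$ is constant in $t$ there) and $\partial_\theta h$ remains non-zero (equal to the common tangent of $\gamma_{in}$ and $\gamma_{out}$), so $X$ extends smoothly across $K$ by zero.
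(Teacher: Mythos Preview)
Your proposal identifies the right object to construct (a framing $h$) and correctly handles the thick case, but contains a genuine gap in the partially thin case: you yourself flag ``the hard step'' of ensuring global smoothness of $h$ across the cusps of the lens-shaped regions, and then do not carry it out. This step is not routine. The thin locus $Z=\partial_{in}A\cap\partial_{out}A$ can be an arbitrary closed subset of a circle (e.g.\ a Cantor set), so there may be infinitely many lenses whose cusps have arbitrarily high and varying orders of contact; coordinating the asymptotics of infinitely many Riemann maps so that the glued foliation is $C^\infty$ across all of $\partial K$ simultaneously is exactly the difficulty, and nothing in your outline addresses it. There is also a preliminary error: your set $K=\{\theta:\gamma_{in}(\theta)=\gamma_{out}(\theta)\}$ records where the \emph{parametrisations} agree, not where the \emph{curves} touch; in general $\varphi_{in}^{-1}(Z)\neq\varphi_{out}^{-1}(Z)$, so the components of $S^1\setminus K$ need not index the components of $\mathring A$, and your formula $h(\theta,t)=\gamma_{in}(\theta)$ on $K$ will not in general produce a map with $h(\cdot,1)=\gamma_{out}$.

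The paper sidesteps the cusp analysis entirely by a different construction. Rather than foliating each lens separately via its own uniformisation, it builds a single global distribution on $A$ as a partition-of-unity blend of two pieces: away from the thin part, the distribution orthogonal to the level sets of the harmonic function on $A\setminus Z$ with Dirichlet data $0$ on $\partial_{in}A$ and $1$ on $\partial_{out}A$; near the thin part, the distribution tangent to radial lines through the origin after welding a disc to $\partial_{in}A$ and uniformising $A\cup\bbD$ to $\bbD$. The second piece is automatically smooth across $Z$ with no cusp asymptotics required. The integral curves of the blended distribution give a \emph{semi}-framing (matching only $\varphi_{in}$), and the mismatch with $\varphi_{out}$ is then corrected by post-composing with a smooth path in $\Diff(S^1)$; this last step is also what absorbs the discrepancy between $\varphi_{in}^{-1}(Z)$ and $\varphi_{out}^{-1}(Z)$ that your definition of $K$ conflates away.
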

\noindent
We conjecture that all paths $[0,1]\to \Witt^{in}$ are geometrically exponentiable.

In future work, we will use this theorem to show that every unitary positive energy representation of the Virasoro algebra integrates to a holomorphic representation of the semigroup $\tAnn$.\footnote{Neretin establishes a similar result for the subsemigroup $\Ann^\circ$ of `thick annuli,' although the holomorphicity of the exponentiated representation is asserted without proof.}

We finish this introduction by listing a couple of things that we believe should be true, but that we were not able to establish:

\begin{conj}
The central extension 
\[
0\to \bbC \times \bbZ \to \tAnn \to \Ann \to 0
\]
is a universal central extension.
\end{conj}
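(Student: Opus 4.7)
The plan is as follows. Given an arbitrary central extension $0 \to A \to E \to \Ann \to 0$ of diffeological semigroups by an abelian diffeological group $A$, I aim to construct a unique semigroup morphism $\tAnn \to E$ over $\Ann$. Writing $A \cong A_0 \times \pi_0(A)$ as the product of its identity component and its group of components, the strategy is to handle these two factors separately and match them against the $\bbC$ and $\bbZ$ factors of the kernel of $\tAnn \to \Ann$, respectively.

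For the continuous factor, the Theorem above---every annulus is the time-ordered exponential of a path in $\Witt^{in}$---lets me differentiate $E$ at the identity to obtain a continuous 2-cocycle on $\Witt$ valued in $\mathrm{Lie}(A_0)$. The classical fact that $H^2_{\mathrm{Lie}}(\Witt;\bbC) = \bbC$ is generated by the Virasoro cocycle (equivalently, $\Vir$ is the universal central extension of $\Witt$) means this cocycle is pulled back from a unique linear map $\bbC \to \mathrm{Lie}(A_0)$. Integrating this infinitesimal data along geometrically exponentiable paths, and using the conjugation-invariance forced by centrality, yields a candidate lift $\tAnn \to E$ modulo $\pi_0(A)$; the ambiguity of expressing a given $A \in \Ann$ as a time-ordered exponential is absorbed by the universality at the Lie algebra level.

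The discrete factor is handled topologically. The inclusion $\Diff(S^1) \hookrightarrow \Ann$ (as the subset of thin annuli) should be a homotopy equivalence: the one-parameter family of semigroup elements obtained by shrinking an annulus towards its inner boundary provides a deformation retraction. Consequently, central extensions of $\Ann$ by a discrete group $D$ are classified by those of $\Diff(S^1)$, i.e.\ by $\Hom(\pi_1(\Diff(S^1)), D) = \Hom(\bbZ, D)$. This identifies the $\bbZ$ factor of the kernel, and combined with the continuous construction it produces a unique semigroup morphism $\tAnn \to E$, establishing universality.

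The hardest step will be a Van~Est-type correspondence for the diffeological semigroup $\Ann$: because $\Ann$ is not a manifold at its partially-thin boundary and the ``exponential map'' is really a map from a path space, one cannot appeal directly to the standard correspondence between group cohomology and Lie algebra cohomology. Producing a semigroup 2-cocycle by integrating infinitesimal data requires controlling smoothness across the stratification by thinness, and the associativity of the resulting lift requires comparing the time-ordered exponential of a concatenated path with the conformal welding of the two annuli, modulo the 2-cocycle. A secondary obstacle is to establish the deformation retraction $\Ann \simeq \Diff(S^1)$ in a sufficiently strong diffeological sense to classify diffeological central extensions by discrete groups.
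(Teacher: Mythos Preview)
The statement you are attempting to prove is a \emph{conjecture} in the paper, not a theorem: the authors do not provide a proof and explicitly leave it open. So there is no ``paper's own proof'' to compare against. What the paper does offer, immediately after restating the conjecture in Section~5, is a comment on the obstruction:
\begin{quote}
The main difficulty that we encountered while trying to prove this conjecture had to do with assigning a Fr\'echet Lie algebra to an arbitrary central extension $Z \to G \to \Ann$. We suspect that the formalism of diffeological semigroups over manifolds with corners might not be the most suitable one for addressing this kind of question.
\end{quote}
This is precisely the step your proposal treats as routine. You write that you will ``differentiate $E$ at the identity to obtain a continuous 2-cocycle on $\Witt$ valued in $\mathrm{Lie}(A_0)$,'' but the authors are telling you that they do not know how to make sense of $\mathrm{Lie}(A_0)$ (or more generally the Lie algebra of $E$) in this generality. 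The diffeological tangent space of Definition~\ref{defn: tangent spaces with corners} need not be a Fr\'echet space, need not carry a continuous Lie bracket, and the cocycle you want may not be continuous or even well-defined. Your acknowledgment that a ``Van~Est-type correspondence'' is the hardest step is apt, but you have not supplied any mechanism to overcome it; you have only named the problem.

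A secondary issue: your splitting $A \cong A_0 \times \pi_0(A)$ is not automatic for an arbitrary abelian diffeological group, and even when it holds the two factors of your argument need to be glued coherently. The topological part (matching $\bbZ$ against $\pi_1(\Ann)$) is on firmer ground---the paper does establish that $\Ann$ is homotopy equivalent to $S^1$---but the continuous part is exactly where the authors got stuck.
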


\begin{conj}
\label{conj: Beltrami equation}
Every path $[0,1]\to \Witt^{in}$ is geometrically exponentiable
\end{conj}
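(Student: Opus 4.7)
The plan is to recognize the equation $X = -\partial_t h/\partial_\theta h$
(equivalently $\partial_t h + f\,\partial_\theta h = 0$ when $X(t) = f(\theta,t)\partial_\theta$)
as a Beltrami equation on the cylinder $S^1 \times [0,1]$. Introducing the complex coordinate
$z = \theta - it$, a direct computation converts this PDE into
$\bar\partial h = \mu\, \partial h$ with Beltrami coefficient $\mu = (i-f)/(i+f)$.
The inward-pointing hypothesis $\mathrm{Im}\,f \geq 0$ translates precisely into
$|\mu| \leq 1$, with strict inequality wherever $f$ is strictly inward and equality
wherever $f$ is tangential to $S^1$.

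First I would handle the strictly inward case: assume $\mathrm{Im}\,f > 0$ pointwise.
Then $\mu$ is smooth with $\|\mu\|_\infty < 1$, so the smooth version of the
Ahlfors--Bers measurable Riemann mapping theorem equips $S^1 \times [0,1]$ with an
integrable smooth complex structure of annular type. Uniformization identifies the
resulting Riemann surface with a standard annulus, which embeds conformally into
$\bbC$; composing and normalising so that $h(\cdot,0) = \mathrm{id}_{S^1}$ produces
a smooth diffeomorphism $h : S^1 \times [0,1] \to A$ onto a thick annulus $A$,
solving the Beltrami equation and hence the original PDE. The inward hypothesis
forces the curves $h(\cdot,t)$ to nest monotonically inward, so $A$ is genuinely
an element of $\Ann$.

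For a general path, I would approximate by setting $f_\epsilon := f + i\epsilon$ with
$\epsilon > 0$, each of which is strictly inward; the first step then yields smooth
diffeomorphisms $h_\epsilon : S^1 \times [0,1] \to A_\epsilon$, and the plan is to
pass to the limit $h_\epsilon \to h$ as $\epsilon \downarrow 0$. Geometrically, one
expects that on the open set $\{\mathrm{Im}\,f > 0\}$ the limit $h$ remains a local
diffeomorphism (the thick region of $A$), while on $\{\mathrm{Im}\,f = 0\}$ it collapses
$S^1 \times [0,1]$ along the integral curves of the now-real vector field
$f\partial_\theta$ (the thin region of $A$).

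The hard part is justifying this degenerate limit: where $|\mu_\epsilon| \to 1$, the
family $\{h_\epsilon\}$ fails to be uniformly quasiconformal and the standard
compactness theorems for quasiconformal maps do not apply. A promising alternative is
a hybrid construction that sidesteps the limit: solve the characteristic ODE
$d\theta/dt = f(\theta,t)$ on the thin region (where it is just the flow of a smooth
real vector field on $S^1$), solve the Beltrami equation on the thick region as above,
and verify that the two constructions match smoothly across the interface
$\partial\{\mathrm{Im}\,f > 0\}$. The diffeological framework developed earlier in the
paper, together with the conformal welding techniques used in the proof of the main
existence theorem, should provide both the appropriate topology in which to take the
limit and the tools to identify the resulting object as a bona fide element of $\Ann$.
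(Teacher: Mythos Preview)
This statement is a \emph{conjecture} in the paper, not a theorem; the paper offers no proof and explicitly flags the underlying degenerate Beltrami equation as an open problem (see Remark~\ref{rem: degenerate Beltrami}). So there is no ``paper's own proof'' to compare against. Your reduction to the Beltrami equation is correct and is exactly the observation the authors make in that remark: with $\mu=(X-i)/(X+i)$ (in their coordinate $z=t+i\theta$), the inward condition $\mathrm{Im}\,X\ge 0$ is equivalent to $|\mu|\le 1$ with $\mu\neq 1$.

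Your proposal is an outline, not a proof, and the gap is precisely where you yourself locate it. The strictly inward case $|\mu|<1$ is classical and not in dispute. For the general case you offer two routes, neither of which is carried through. The limiting argument $f_\varepsilon=f+i\varepsilon$ fails for exactly the reason you state: the $h_\varepsilon$ are not uniformly quasiconformal on the set where $|\mu|\to 1$, and you supply no substitute compactness. The ``hybrid'' construction is more seriously incomplete. The set $\{\mathrm{Im}\,f=0\}\subset S^1\times[0,1]$ is an arbitrary closed subset, not in general a union of integral curves of the real vector field $f\partial_\theta+\partial_t$, so ``solving the characteristic ODE on the thin region'' is not a well-posed step: a characteristic through a point of $\{\mathrm{Im}\,f=0\}$ may immediately enter $\{\mathrm{Im}\,f>0\}$. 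Moreover the interface $\partial\{\mathrm{Im}\,f>0\}$ need not be smooth (or even rectifiable), so ``matching smoothly across the interface'' and invoking conformal welding there is not justified. Nothing in the paper's welding or diffeological machinery addresses degenerate Beltrami equations; the authors prove surjectivity of the exponential (Proposition~\ref{prop: Exp is surjective and admits local sections}) by constructing framings geometrically, not by solving PDEs, and they leave the converse direction open.
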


Note that Conjecture~\ref{conj: Beltrami equation} is equivalent to a certain degenerate Beltrami equation always admitting solutions -- see Remark~\ref{rem: degenerate Beltrami}.

\subsection*{Acknowledgements}

The second author was supported by ARC Discovery Project DP200100067.

\section{Annuli with thin parts}
\label{sec: Annuli with thin parts}

Let us call a closed subset $D\subset \bbC$ bound by a smooth Jordan curve an \emph{embedded disc}.
An \emph{embedded annulus} is a subset of $\bbC$ of the form
\[
A=D_{out}\setminus \mathring D_{in}\subset \bbC,
\]
where $D_{in}\subset D_{out}\subset \bbC$ are embedded discs.
The inner and outer boundaries $\partial_{in}A=\partial D_{in}$ and $\partial_{out}A=\partial D_{out}$ of an embedded annulus are equipped with the orientations inherited from $D_{in/out}$.
Note that $\partial_{in}A\cap\partial_{out}A$ is allowed to be non-empty, and that the interior $\mathring A$ is allowed to be disconnected, or even empty.
We equip $A$ with the sheaf $\cO_A$ of functions that are continuous on $A$, holomorphic on $\mathring A$, and smooth on $\partial_{in}A$ and on $\partial_{out}A$. Namely, for an open $U\subset A$, we set
\begin{equation}\label{eq: def of O_A}
\cO_A(U) := \left\{
\parbox{9cm}
{continuous functions $U\to \bbC$ that are holomorphic on $\mathring A\cap U$, smooth on $\partial_{in}A\cap U$, and smooth on $\partial_{out}A\cap U$}
\right\}
\end{equation}

\begin{defn}\label{def: annulus}
An \emph{annulus} is a locally ringed space $A=(A,\cO_A)$ which is isomorphic to an embedded annulus.
We also record the orientations of $\partial_{in}A$ and $\partial_{out}A$ as part of the data of an annulus.\footnote{When the interior of $A$ is not empty, the orientations of $\partial_{in}A$ and $\partial_{out}A$ can be deduced from that of $\mathring A$.}
An annulus is called \emph{thick} if $\partial_{in}A\cap\partial_{out}A=\emptyset$ (in which case it is diffeomorphic to $S^1\times [0,1]$).
\end{defn}

An \emph{annulus with parametrised boundary} is an annulus equipped with orientation-preserving diffeomorphisms $\varphi_{out}:S^1\to \partial_{out}A$ and $\varphi_{in}:S^1\to \partial_{in}A$.

\begin{defn}
The \emph{semigroup of annuli} $\Ann$ is the set of isomorphisms classes of annuli with parametrised boundary.
\end{defn}

The set $\Ann^{\!\!\;\mathit{emb}}$ of embedded annuli with parametrised boundary is the subset of $C^\infty(S^1,\bbC)\times  C^\infty(S^1,\bbC)$
consisting of pairs of smooth Jordan curves such that one is `inside' the other.
By definition, any annulus is isomorphic to an embedded annulus, so there is an obvious surjective map $\Ann^{\!\!\;\mathit{emb}}\twoheadrightarrow\Ann$.
We thus get:
\begin{equation}
\label{eq: Ann as subquotient of C^inf x C^inf}
C^\infty(S^1,\bbC)\times  C^\infty(S^1,\bbC)
\,\supset\,
\Ann^{\!\!\;\mathit{emb}}
\,\twoheadrightarrow\,
\Ann.
\end{equation}

The semigroup operation on $\Ann$ is \emph{conformal welding} (explained in Section~\ref{sec: conformal welding}), which
is the gluing of annuli along their boundaries\footnote{This operation is just a pushout in the category of (locally) ringed spaces.}:
\begin{equation}\label{eq: 1st mention of conformal welding}
\begin{split}
\cup:\Ann &\times \Ann \,\longrightarrow\, \Ann\\
A_1 &\cup A_2\, :=\, \mathrm{pushout}\big(A_1 \xleftarrow{\,\varphi_{in}^{A_1}\,} S_1 \xrightarrow{\,\varphi_{out}^{A_2}\,} A_2\big)\\
&\phantom{\cup A_2\,\, :}=\, A_1 \sqcup A_2 \big/ \varphi_{in}^{A_1}(x)\sim \varphi_{out}^{A_2}(x).
\end{split}
\end{equation}
The annulus $A_1 \cup A_2$ is equipped with the sheaf $\cO_{A_{1} \cup A_{2}}$ specified by
$f\in \cO_{A_1 \cup A_2}(U) \Leftrightarrow f|_{U\cap A_i}\in \cO_{A_i}(U\cap A_i)$ for $i=1,2$.
We will prove later, in Proposition~\ref{prop: conformal welding of annuli}, that $(A_1 \cup A_2,\cO_{A_1\cup A_2})$ is again an element of $\Ann$.

The operation of conformal welding can also be performed between a disc (Definition~\ref{def: univ Teich space}) and an annulus, to form a new disc.
One can also weld two discs to the two boundaries of an annulus to for a closed genus zero surface.

Performing the latter operation and then applying the Riemann uniformization theorem to identify the resulting surface with $\bbC P^1$,
we can get an alternative description of the semigroup of annuli, as follows.
Let $\bbD_-:=\{z\in\bbC\cup\{\infty\}\,:\,|z|\ge1\}$ and $\bbD_+:=\bbD=\{z\in\bbC\,:\,|z|\le1\}$.
Given an annulus with parametrised boundary $A$, there is a unique biholomorphic map
\[
\psi\,:\,\bbD_-\cup A\cup \bbD_+\stackrel{\scriptscriptstyle\cong}\longrightarrow \bbC P^1
\]
sending $\infty\in\bbD_-$ to $\infty \in\bbC P^1$, with $\psi'(\infty)=1$, and $\psi''(\infty)=0$.
Letting $\psi_\pm:=\psi|_{\bbD_\pm}$, this identifies $\Ann$ with the space of pairs of embeddings 
\begin{equation} \label{eq: def: two discs mapping to CP^1}
\Ann \,\cong\, \left\{\,\,
\begin{matrix}
\psi_-:\bbD_-\hookrightarrow \bbC P^1\\[1mm]
\psi_+:\bbD_+\hookrightarrow \bbC P^1\\
\end{matrix}\,\left|\,\,
\begin{matrix}
\psi_+(z)=a_0+a_1z+a_2z^2+\ldots\,\,\,\,\,\\[.5mm]
\psi_-(z)=z+b_1z^{-1}+b_2z^{-2}+\ldots\\[.5mm]
\,\psi_-(\mathring\bbD_-)\cap \psi_+(\mathring\bbD_+)=\emptyset\quad
\end{matrix}\right.\right\}.
\end{equation}
The map which sends an annulus with parametrised boundary $A$ to the sequences $((a_i)_{i\ge 0},(b_i)_{i\ge 1})$ in the right hand side of \eqref{eq: def: two discs mapping to CP^1}
identifies $\Ann$ with a subset of the space $s\oplus s$ of pairs of rapidly decreasing sequences (those with the property that the maps $\psi_-$ and $\psi_+$ are embeddings, and $\psi_-(\mathring\DD_-)$ and $\psi_+(\mathring\DD_+)$ are disjoint).

Summarising the above discussion, we have three equivalent descriptions of the semigroup of annuli:
\begin{enumerate}
\item\label{enum: def 1 on Ann}
$\Ann$ is a quotient of $\Ann^{\!\!\;\mathit{emb}}$ which is itself a subset of $ C^\infty(S^1,\bbC)\times  C^\infty(S^1,\bbC)$. This is explained in \eqref{eq: Ann as subquotient of C^inf x C^inf}.
\item\label{enum: def 2 on Ann} 
$\Ann$ is a subset of the space $s\oplus s$ of pairs of rapidly decreasing sequences (representing maps $\bbD_-\to \bbC P^1$ and $\bbD_+\to \bbC P^1$). See \eqref{eq: def: two discs mapping to CP^1}.
\item\label{enum: def 3 on Ann}
The `official' definition via abstract annuli (Definition~\ref{def: annulus}).
\end{enumerate}
The semigroup of annuli is equipped with an involution
\begin{equation}
\label{eq: dag}
\dagger: \Ann \to \Ann
\end{equation}
which sends an annulus $A$ to the same annulus equipped with the opposite complex structure: $f\in \cO_A(U)\Leftrightarrow \bar f\in \cO_{A^\dagger}(U)$. The incoming and outgoing circles are exchanged, but their parametrisations remain the same.

There is an embedding
\begin{equation}
\label{eq: Diff ---> Ann}
\Diff(S^1)\hookrightarrow \Ann
\end{equation}
which sends $\psi\in \Diff(S^1)$ to the completely thin annulus  $A_\psi:=(A_\psi=S^1,\varphi_{in}=\psi,$ $\varphi_{out}=\id)$.
This embedding exhibits the semigroup of annuli as a complexification of sorts of $\Diff(S^1)$.
If we replace $\psi$ above by a univalent map $\bbD\to\bbD$, then we get another important sub-semigroup:

\begin{defn}
The \emph{semigroup of univalent maps} $\Univ$ is the set of holomorphic embeddings $f:\bbD\to\bbD$ (the derivative of $f$ is non-zero everywhere, including on $\partial\bbD$).
The inclusion $\Univ\hookrightarrow \Ann$ sends a univalent map $f\in \Univ$ to the annulus
\[
A_f := \bbD \setminus f(\mathring \bbD),
\]
with boundary parametrisations $\varphi_{in}=f$ and $\varphi_{out}=\id$.

We also define $\Univ^\dagger\subset \Ann$ to be the image of $\Univ\subset \Ann$ under the involution~\eqref{eq: dag}.
\end{defn}

Much of the above discussion involving annuli admits a straightforward (and easier) analog for discs with parametrised boundaries:

\begin{defn}\label{def: univ Teich space}
A \emph{disc} is a complex manifold with boundary which is holomorphically equivalent to the closed unit disc.
A \emph{disc with parametrised boundary} is a disc $D$ equipped with an orientation preserving diffeomorphism $S^1\to\partial D$.

The \emph{universal Teichm\"uller space} $\cT$ is the set of isomorphism classes of discs with parametrised boundary.
\end{defn}

$\cT$ admits three descriptions that are analogous to our three descriptions of $\Ann$:
(1) as a quotient of the subset $\cT^{\!\!\;\mathit{emb}}\subset C^\infty(S^1,\bbC)$ consisting of Jordan curves with positive winding,
(2) as a subspace of the space of rapidly decreasing sequences (those sequences $(a_i)$ such that $F(z) = z^{-1} + a_1z + a_2z^2 + \ldots$ defines a univalent map),
and (3) the `official' definition via abstract discs (Definition~\ref{def: univ Teich space}).

The semigroup of annuli acts on the universal Teichm\"uller space by conformal welding, and the map
\begin{equation}\label{eq: action of Ann on T}
\Ann\to\cT:\,A\mapsto A\cup\bbD
\end{equation}
identifies $\cT$ with the quotient $\Ann/\Univ$ (the quotient of $\Ann$ by the equivalence relation generated by $A\cup B\sim A$, for all $B\in \Univ$).

Both the semigroup of annuli $\Ann$ and the universal Teichm\"uller space $\cT$ have natural complex structures
(\cite{SegalDef, Neretin90},\cite{Nag88}).
But since $\Ann$ is not a manifold (it is some kind of infinite dimensional manifold with corners), formulating this requires care.
In this paper, it shall be sufficient for us to treat $\Ann$ and $\cT$ as a complex diffeological spaces:

\begin{defn}
A \emph{complex diffeological space} is a set $X$ equipped with, for every finite dimensional complex manifold $M$, a subset $\mathrm{Hol}(M,X)\subset \Hom_{\mathsf{Set}}(M,X)$ of `holomorphic maps', such that the assignment $M\mapsto \mathrm{Hol}(M,X)$ is a sheaf.

A holomorphic map $f:X \to Y$ between complex diffeological spaces is one that respects the diffeologies, i.e., 
one such that $f \circ -$ maps $\mathrm{Hol}(M,X)$ to $\mathrm{Hol}(M,Y)$.
\end{defn}

To equip the semigroup of annuli with a complex diffeology, we need to state which maps $M\to\Ann$ we declare to be holomorphic.

Recall that, by definition, $\Ann$ is a quotient of the space of embedded annuli, which is itself a subset of $ C^\infty(S^1,\bbC)\times  C^\infty(S^1,\bbC)$.
The obvious complex diffeology on $ C^\infty(S^1,\bbC)\times  C^\infty(S^1,\bbC)$ restricts to a diffeology on the space of embedded annuli, and
we equip $\Ann$ with the quotient diffeology.
(For the reader unfamiliar with $ C^\infty$-valued holomorphic functions, we recall that
for $M$ a finite-dimensional complex manifold, and $N$ a compact finite-dimensional smooth manifold,
a map $f: M \to C^\infty(N)$ is holomorphic if and only if the corresponding map
$\tilde f:M \times N \to \bbC$ is smooth, and fiberwise holomorphic in the direction of $M$.)

The alternative description of $\Ann$ as a subset of the vector space $s\oplus s$ of pairs of rapidly decreasing sequences, provides another way of equipping $\Ann$ with a complex diffeology, by declaring a map $M\to\Ann$ to be holomorphic if the corresponding map $M\to s\oplus s$ is.
We will show later, in Lemma~\ref{lem: three diffeologies on Ann agree}, that these two diffeologies agree.

There is also a third way of talking about families of annuli, which does not make explicit reference to embeddings.
Given a finite dimensional complex manifold $M$, and a holomorphic map $f:M\to C^\infty(S^1,\bbC)\times  C^\infty(S^1,\bbC)$ that lands in $\Ann^{\!\!\;\mathit{emb}}$, we can form the ``total space''
\begin{equation}
\label{eq: underline A}
\underline A := \big\{
(z,m)\in \bbC\times M\,\big|\,
\parbox{6cm}
{$z$ is in the annulus defined by $f(m)$}
\big\}
\end{equation}
of the family. It is a closed subset of $M\times \bbC$, equipped with the sheaf of functions that are continuous, smooth on the inner boundary, smooth on the outer boundary, and holomorphic in the interior.
And it comes with boundary parametrisations $\varphi_{in/out}:S^1\times M\hookrightarrow \underline A$.

\begin{defn}\label{def: family of abstract annuli}
Let $M$ be a finite dimensional complex manifold.

A \emph{holomorphic family of annuli} is a locally ringed space $\underline A$ with a map $\pi:\underline A\to M$ and boundary parametrisations $\varphi_{in/out}:S^1\times M \hookrightarrow \underline{A}$ such that $(\underline A,\pi,\varphi_{in/out})$ is locally isomorphic to a family of the form \eqref{eq: underline A}.
\end{defn}

The same works for discs:

\begin{defn}\label{def: family of abstract discs}
A \emph{holomorphic family of discs} is a map of locally ringed spaces $\underline D\to M$ with boundary parametrisation $S^1\times M \hookrightarrow \underline{D}$, which is locally equivalent to one coming from a holomorphic map $M\to \cT^{\!\!\;\mathit{emb}}\subset C^\infty(S^1,\bbC)$ (recall that $\cT^{\!\!\;\mathit{emb}}$ is the space of Jordan curves with positive winding, which is an open subset of $C^\infty(S^1,\bbC)$).
\end{defn}

Note that if $\underline{A}\to M$ is a holomorphic family of annuli, then the boundary parametrisation $\varphi_{in/out}:S^1\times M\to \underline{A}$ are `holomorphic' in the sense that for each point $\theta\in S^1$, the corresponding map $\varphi_{in/out}(\theta,-):M\to \underline{A}$ is holomorphic.
The same statement also holds for discs.

So far, we have encountered three potentially distinct notions of holomorphic fami\-lies of annuli:
\begin{enumerate}
\item\label{enum: diffeology 1 on Ann} using the diffeology coming from $\Ann$ as a quotient of $\Ann^{\!\!\;\mathit{emb}}$, which is itself a subset of $ C^\infty(S^1,\bbC)\times  C^\infty(S^1,\bbC)$
\item\label{enum: diffeology 2 on Ann} using the diffeology coming from $\Ann$ as a subset of the space $s\oplus s$ of pairs of rapidly decreasing sequences (representing maps $\bbD_-\to \bbC P^1$ and $\bbD_+\to \bbC P^1$).
\item\label{enum: diffeology 3 on Ann} via families of abstract annuli, as described in Definition~\ref{def: family of abstract annuli} 
\end{enumerate}
Our next goal is it show that these three notions of holomorphic families agree. The proof relies on the holomorphicity of conformal welding, a result whose proof we defer until Sections~\ref{sec: conformal welding discs} and~\ref{sec: conformal welding annuli}.

\begin{lem}\label{lem: three diffeologies on Ann agree}
The above three notions of holomorphic families of annuli all agree.
\end{lem}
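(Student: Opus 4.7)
The plan is to establish the equivalences $(1) \Leftrightarrow (3)$ and $(1) \Leftrightarrow (2)$ separately. The first is essentially tautological once one unpacks Definition~\ref{def: family of abstract annuli}, while the second is substantive, and its difficult direction reduces to the holomorphicity of conformal welding (whose proof is deferred to Sections~\ref{sec: conformal welding discs} and~\ref{sec: conformal welding annuli}).

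For $(1) \Leftrightarrow (3)$, by Definition~\ref{def: family of abstract annuli} a holomorphic family of abstract annuli over $M$ is, locally on $M$, exactly one obtained from a holomorphic map $M \to \Ann^{\!\!\;\mathit{emb}}$ via the total-space construction \eqref{eq: underline A}. A map $M \to \Ann$ holomorphic in sense $(1)$ therefore admits, locally, lifts to $\Ann^{\!\!\;\mathit{emb}}$ producing such a family; any two such lifts differ by a diffeomorphism of $\bbC$ holomorphic on the interior of the annulus, which is exactly the equivalence relation defining $\Ann$, so the resulting family is canonically well-defined up to isomorphism. Conversely, a holomorphic family of abstract annuli supplies, by definition, local holomorphic classifying maps into $\Ann^{\!\!\;\mathit{emb}}$ whose composition with $\Ann^{\!\!\;\mathit{emb}} \twoheadrightarrow \Ann$ agrees on overlaps.

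For $(2) \Rightarrow (1)$, suppose $f: M \to s \oplus s$ is holomorphic and lands in $\Ann$ via \eqref{eq: def: two discs mapping to CP^1}. The sequences $(a_n(m))_{n \ge 0}$ and $(b_n(m))_{n \ge 1}$ are rapidly decreasing in $n$ and holomorphic in $m$, so the boundary values of the univalent maps $\psi_\pm(m)$ admit the Fourier expansions
\[
\psi_+(m)|_{S^1}(e^{i\theta}) = \sum_{n \ge 0} a_n(m)\, e^{in\theta}, \qquad \psi_-(m)|_{S^1}(e^{i\theta}) = e^{i\theta} + \sum_{n \ge 1} b_n(m)\, e^{-in\theta},
\]
which converge smoothly in $\theta$ and depend holomorphically on $m$ in the $C^\infty$-topology, by the standard criterion for $C^\infty$-valued holomorphic functions recalled in the paper. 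The disjointness condition in \eqref{eq: def: two discs mapping to CP^1} guarantees that the resulting pair of Jordan curves lies in $\Ann^{\!\!\;\mathit{emb}}$, and its image in $\Ann$ reproduces $f$, witnessing holomorphicity in sense $(1)$.

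For $(1) \Rightarrow (2)$ -- the main obstacle -- one must show that if $\tilde f: M \to \Ann^{\!\!\;\mathit{emb}}$ is holomorphic, then the sequences $(a_n(m)), (b_n(m))$ obtained by welding $\bbD_\pm$ to the outer and inner boundaries of $A(m)$ and applying the normalised Riemann uniformisation $\psi(m):\bbD_-\cup A(m)\cup \bbD_+\stackrel{\cong}{\to}\bbC P^1$ depend holomorphically on $m$. This is precisely the content of the holomorphicity of conformal welding: welding the constant disc families $M \times \bbD_\pm$ to a holomorphic annulus family yields a holomorphic family of copies of $\bbC P^1$, which we will prove in Sections~\ref{sec: conformal welding discs} and~\ref{sec: conformal welding annuli}. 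Once this is granted, the normalisation $\psi(\infty)=\infty$, $\psi'(\infty)=1$, $\psi''(\infty)=0$ determines the uniformising embeddings $\psi_\pm(m): \bbD_\pm \to \bbC P^1$ fiberwise, and extracting their Taylor coefficients at $0$ and at $\infty$ is a continuous linear projection into $s$, producing the required holomorphic map $M \to s \oplus s$. The entire lemma therefore hinges on the holomorphicity of welding, which is the central technical content of the later sections.
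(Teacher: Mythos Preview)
Your proof is correct and follows essentially the same approach as the paper: the substantive step in both is reducing the hard implication to the holomorphicity of conformal welding (Propositions~\ref{prop: conformal welding of annuli with disc --- families} and~\ref{prop: welding discs in holomorphic families}). The only difference is organizational: the paper argues the cycle $(2)\Rightarrow(1)\Rightarrow(3)\Rightarrow(2)$, whereas you prove $(1)\Leftrightarrow(3)$ directly from Definition~\ref{def: family of abstract annuli} and then $(1)\Leftrightarrow(2)$; your observation that $(3)\Rightarrow(1)$ is tautological is correct and slightly streamlines the logic.
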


\begin{proof}
A family that is holomorphic in the sense of \ref{enum: diffeology 2 on Ann} is visibly holomorphic in the sense of \ref{enum: diffeology 1 on Ann}, by restricting the embeddings of $\bbD_{\pm}$ to their boundary.

We next show that a family $M \to \Ann$ that is holomorphic in the sense of \ref{enum: diffeology 1 on Ann} is holomorphic in the sense of \ref{enum: diffeology 3 on Ann}.
Being holomorphic for the quotient diffeology, there exists an open cover $\{U_i\}$ of $M$ along with lifts
$U_i \to \Ann^{\!\!\;\mathit{emb}}$.
Each such family has an associated total space, which lives in $U_i\times \bbC$, and the total space of our initial family $M \to \Ann$ is obtained by gluing those together.
This yields a holomorphic family of abstract annuli as in \ref{enum: diffeology 3 on Ann}.

Finally, we show that a holomorphic family of abstract annuli as in \ref{enum: diffeology 3 on Ann} induces a holomorphic map in the sense of \ref{enum: diffeology 2 on Ann}.
Given such a family $\underline{A} \to M$ (along with its boundary parametrisations), we perform the same construction as \eqref{eq: def: two discs mapping to CP^1}, fiberwise in the family.
That is, we weld $\bbD_{\pm}$ to the incoming and outgoing boundaries of each fibre to produce a family of complex manifolds isomorphic to $\bbC P^1$, then uniformize the resulting family while controlling the first and second derivatives at the point $\infty$:
\[
(\bbD_- \times M) \cup \underline A \cup (\bbD_+ \times M) \,\,\cong\,\, \bbC P^1 \times M.
\]
By Propositions~\ref{prop: conformal welding of annuli with disc --- families} and~\ref{prop: welding discs in holomorphic families}, this produces a holomorphic map of $M$ into the appropriate subspace of $\Hol(\bbD_+, \bbC P^1) \times \Hol(\bbD_-, \bbC P^1)$.
\end{proof}

Similarly to the case of annuli,
the three descriptions of $\cT$ encountered above yield three notions of holomorphic families of discs.
And, as in Lemma~\ref{lem: three diffeologies on Ann agree},
these three notions of holomorphic families of discs all agree.

We record the following definition for later usage:

\begin{defn}
A \emph{holomorphic family of points} inside a holomorphic family of annuli $\pi:\underline A\to M$ is a map of locally ringed spaces $M\to \underline A$ which is a section of $\pi$.

(A holomorphic family of points inside a holomorphic family of discs $\pi:\underline D\to M$ is a map of locally ringed spaces $M\to \underline D$ which is a section of $\pi$.)
\end{defn}

A drawback of the complex diffeology on the semigroup of annuli is that it doesn't see that $\Ann$ is connected.
In order to illustrate this difficulty, consider the closed upper half-plane $\bbH \subset \bbC$ as a complex diffeological space (with the complex diffeology inherited from $\bbC$).
If $M$ is a connected complex manifold, then, by the open mapping theorem, any holomorphic map $f:M \to \bbH$ whose image intersects the boundary $\partial\bbH = \bbR$ must be constant.
So, as a complex diffeological space $\bbH$ decomposes as the disjoint union of its interior $\mathring{\bbH}$ and an uncountable discrete family of points on the real line.
The same phenomenon happens with $\Ann$: as a complex diffeological space, it is disconnected, with every element of $\Diff(S^1)$ being its own connected component (see Corollary~\ref{cor: hol family of thin is constant} below for a proof).
In particular, the complex diffeology `cannot see' the tangent spaces to $\Ann$ at non-interior points.

Because of the above shortcoming of the complex diffeology on $\Ann$,
we complement it with another diffeology, based on maps from finite-dimensional real manifolds with corners.
This `diffeology with corners'
works in much the same way as the complex diffeology.
Once again, there are three equivalent approaches to defining that diffeology.
The first approach comes from viewing $\Ann$ as a quotient of $\Ann^{\!\!\;\mathit{emb}}$. The second approach comes from viewing it as a subset $s\oplus s$.
Finally, we can make sense of smooth families of abstract annuli (parametrised by a manifold with corners). For that, we proceed as above except that, in the definition of the sheaf of functions on the total space, we replace the condition of being holomorphic in the interior by the condition of being fiberwise holomorphic in the interior.
The composition, or welding, of smooth families of abstract annuli is once again defined as a pushout of ringed spaces (see Proposition~\ref{prop: conformal welding of annuli --- families}). 
And the proof of equivalence of the above three notions of smooth families of annuli proceeds along the same steps as for Lemma~\ref{lem: three diffeologies on Ann agree}.

As is the case for $\Ann$, the universal Teichm\"uller space $\cT$ admits both a complex diffeology and a real diffeology (based on manifold with corners if one wishes), each of which admits three equivalent descriptions:
(1) the quotient diffeology on the subset of $ C^\infty(S^1,\bbC)$ consisting of Jordan curves with positive winding,
(2) the subspace diffeology of the space of rapidly decreasing sequences corresponding to Laurent series that define univalent maps,
and (3) the via families of abstract discs (defined via ringed spaces).

The `diffeology with corners' on $\Ann$ allows for a definition of the tangent bundle of the semigroup of annuli:

\begin{defn}\label{defn: tangent spaces with corners}
(Adaptation of \cite[Def. 3.1]{ChristensenWu16}, \cite[\S3.3]{Hector95}
to the case of manifolds with corners.)
Let $X$ be a diffeological space on the site of manifolds with corners, let $x\in X$ be a point, and let $P_xX$ be the set of all smooth maps $[0,\infty)\to X$ that send $0$ to~$x$.
Then the tangent space $T_xX$ is the quotient of the free vector space $\bbR[P_xX]$ by the relation which declares $a\gamma_1+b\gamma_2=\gamma_3$
if there exists a smooth map $\Gamma:[0,\infty)^2\to X$ such that
$\gamma_1(t)=\Gamma(t,0)$,
$\gamma_2(t)=\Gamma(0,t)$, and
$\gamma_3(t)=\Gamma(at,bt)$.
\end{defn}

We will show later, in Proposition~\ref{prop: tangent space of Ann}, that, as stated in \cite[Prop 2.4]{SegalDef},
\begin{equation}
T_A\Ann
\,=\,\frac{\cX(\partial_{out}A)\oplus \cX(\partial_{in}A)}{\cX_{\mathrm{hol}}(A)}
\,=\,\frac{\cX(S^1)\oplus \cX(S^1)}{\cX_{\mathrm{hol}}(A)},
\end{equation}
where $\cX(S^1)$ denotes the space of complexified vector fields on the circle, and $\cX_{\mathrm{hol}}(A)$ denotes the space of holomorphic vector fields on $A$ (Definition~\ref{def: tangent bundle of annulus}).

We now supply the promised proof that if a holomorphic family of annuli contains an element of $\Diff(S^1)\subset \Ann$, then the family is locally constant around that point. More generally, if $M$ is a connected complex manifold, 
and $\underline A\to M$ be a holomorphic family of annuli, with fiber over $m\in M$ denoted $A_m$, then
we have:

\begin{lem}\label{lem: hol family of thin is constant}
Let $\underline A\to M$ be as above. 
If $Z_m:=\partial_{in}A_m\cap\partial_{out}A_m$ denotes the thin part of $A_m$,
then the subsets $\varphi_{in}^{-1}(Z_m)$ and $\varphi_{out}^{-1}(Z_m)$ are independent of $m$,
and so is the map $\varphi_{out}^{-1}\circ \varphi_{in}:\varphi_{in}^{-1}(Z_m)\to\varphi_{out}^{-1}(Z_m)$.
\end{lem}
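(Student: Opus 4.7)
The plan is to work with the description of annuli as pairs of maps $\psi_\pm^m:\bbD_\pm\hookrightarrow\bbC P^1$ from \eqref{eq: def: two discs mapping to CP^1}, which depend holomorphically on $m$ by Lemma~\ref{lem: three diffeologies on Ann agree}. Under this description, $\partial_{in}A_m = \psi_+^m(S^1)$ with parametrisation $\varphi_{in} = \psi_+^m|_{S^1}$, and similarly for $\partial_{out}A_m$. I would introduce the ``contact graph''
$$E_m := \{(z_+, z_-) \in S^1 \times S^1 : \psi_+^m(z_+) = \psi_-^m(z_-)\}.$$
Then $\varphi_{in}^{-1}(Z_m)$ and $\varphi_{out}^{-1}(Z_m)$ are the two projections of $E_m$, and injectivity of $\psi_-^m|_{S^1}$ identifies $E_m$ with the graph of $\varphi_{out}^{-1}\circ\varphi_{in}$. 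The entire lemma therefore reduces to showing that $E_m$ is independent of $m$.

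To this end, fix a pair $(z_+, z_-) \in E_{m_0}$ for some $m_0 \in M$, and let $S := \{m \in M : \psi_+^m(z_+) = \psi_-^m(z_-)\}$. This set is closed by continuity and contains $m_0$, so by connectedness of $M$ it suffices to show $S$ is open at $m_0$. Choose a chart $U \cong \bbC$ of $\bbC P^1$ around the common value $p_0 = \psi_+^{m_0}(z_+) = \psi_-^{m_0}(z_-)$ and a connected neighbourhood $V$ of $m_0$ on which the relevant values of $\psi_\pm^m$ remain in $U$. For $r \in (0,1)$ close to $1$, the points $rz_+ \in \mathring{\bbD}_+$ and $r^{-1}z_- \in \mathring{\bbD}_-$ lie in the respective interiors, and the holomorphic functions
$$g_r(m) := \psi_+^m(rz_+) - \psi_-^m(r^{-1}z_-), \qquad m \in V,$$
(computed in the chart $U$) are nowhere zero on $V$ by the non-intersection condition $\psi_+^m(\mathring{\bbD}_+) \cap \psi_-^m(\mathring{\bbD}_-) = \emptyset$. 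As $r \to 1^-$, the $g_r$ converge uniformly on compact subsets of $V$ to $g_1(m) := \psi_+^m(z_+) - \psi_-^m(z_-)$, so Hurwitz's theorem (applied along any holomorphic disc in $V$ through $m_0$) forces $g_1$ to be either identically zero or nowhere zero on $V$. Since $g_1(m_0) = 0$, it must vanish identically on $V$, giving $V \subseteq S$.

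Connectedness of $M$ then upgrades this to $S = M$; since $(z_+, z_-) \in E_{m_0}$ was arbitrary this shows $E_{m_0} \subseteq E_m$ for every $m$, and reversing the roles gives the opposite inclusion. I expect the only delicate step to be the joint continuity of $\psi_\pm^m$ in $(z, m)$ needed to legitimately invoke Hurwitz; this follows from the local description of a holomorphic family via rapidly decreasing coefficient sequences and uniform convergence of the resulting power series on compacts of $\bbD_\pm$. The Hurwitz-plus-non-intersection step is the essential content, reflecting the same phenomenon that forces a holomorphic map into the closed upper half-plane to be constant once it touches the real axis.
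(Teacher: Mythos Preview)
Your argument is correct and takes a genuinely different route from the paper's. Both proofs begin the same way: reduce to embedded annuli via the $\psi_\pm$ description, and fix a contact pair $(\theta_{in},\theta_{out})$ (your $(z_+,z_-)$) at some $m_0$. The paper then translates so that $\varphi_{in}^m(\theta_{in})\equiv 0$, and argues that $f(m)=\varphi_{out}^m(\theta_{out})$ cannot be non-constant: if it were, the open mapping theorem would force $f(U)$ to be an open neighbourhood of $0$, which is impossible since $f(U)\subset\bigcup_{m\in U}A_m$ and the latter misses a sector near $0$ (all inner boundary curves pass through $0$). Your approach instead pushes the two contact points slightly into the respective open discs $\mathring\bbD_\pm$ and exploits the disjointness condition $\psi_+^m(\mathring\bbD_+)\cap\psi_-^m(\mathring\bbD_-)=\emptyset$ to produce nowhere-vanishing holomorphic functions $g_r$ converging to $g_1$, then invokes Hurwitz. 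This is arguably cleaner: it uses the disjointness hypothesis directly and avoids the (unstated in the paper) geometric observation that $\bigcup_{m\in U}A_m$ fails to be a neighbourhood of the contact point. The paper's argument is slightly more elementary in that it only needs the open mapping theorem, but it leans on a geometric picture that your Hurwitz argument bypasses entirely.
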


\begin{proof}
By gluing two discs onto $A_m$ and uniformizing, we may assume that the annuli $A_m$ are embedded.

Pick a point $m_0\in M$ and two points $\theta_{in},\theta_{out}\in S^1$ such that $\varphi^{m_0}_{in}(\theta_{in})=\varphi^{m_0}_{out}(\theta_{out})$.
We need to show that $\varphi^{m}_{in}(\theta_{in})=\varphi^{m}_{out}(\theta_{out})$ for all $m\in M$.
By performing an appropriate translation, we may assume without loss of generality that $\varphi^{m}_{in}(\theta_{in})=0$ for all $m\in M$.
We need to show that $\varphi^{m}_{out}(\theta_{out})$ is also zero for all $m\in M$.

Since $0\in \partial_{in} A_m$ for every $m$, there exists a neighbourhood $U\subset M$ of $m_0$ such that 
$\cup_{m\in U} A_m$ is not an open neighbourhood of $0$.
If $f:m\mapsto \varphi^{m}_{out}(\theta_{out})$ was not constant then, by the open mapping theorem, $f(U)$ would be open.
But $0\in f(U)$, and $f(U)\subset \cup_{m\in U} A_m$. So $f(U)$ cannot be open, and $f$ must be constant.
\end{proof}

\begin{cor}\label{cor: hol family of thin is constant}
Let $M$ be a connected complex manifold. If $\underline A\to M$ is a holomorphic family of annuli with one fiber in $\Diff(S^1)\subset \Ann$, then the family is constant.
\end{cor}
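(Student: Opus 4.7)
The plan is to apply Lemma~\ref{lem: hol family of thin is constant} and then observe that a completely thin annulus is, by definition, an element of $\Diff(S^1)\subset \Ann$, so the data captured by that lemma already determines the fiber as an element of $\Ann$.

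More precisely, let $m_0\in M$ be the point at which the fiber $A_{m_0}$ lies in $\Diff(S^1)$, corresponding to some $\psi\in \Diff(S^1)$. By the embedding~\eqref{eq: Diff ---> Ann}, the underlying annulus of $A_{m_0}$ is a completely thin annulus with $\partial_{in}A_{m_0}=\partial_{out}A_{m_0}$, so the thin part satisfies $Z_{m_0}=\partial_{in}A_{m_0}=\partial_{out}A_{m_0}$, and the two parametrisations satisfy $\varphi_{in}^{-1}(Z_{m_0})=\varphi_{out}^{-1}(Z_{m_0})=S^1$ with $\varphi_{out}^{-1}\circ \varphi_{in}=\psi$ as maps $S^1\to S^1$.

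By Lemma~\ref{lem: hol family of thin is constant} (applied first on a connected neighborhood of $m_0$, then propagated across all of $M$ via connectedness and the fact that the locus where the fiber is thin is closed under the conclusion of the lemma), for every $m\in M$ we have $\varphi_{in}^{-1}(Z_m)=S^1$ and $\varphi_{out}^{-1}(Z_m)=S^1$. This forces $\partial_{in}A_m\subseteq \partial_{out}A_m$ and $\partial_{out}A_m\subseteq \partial_{in}A_m$, so $\partial_{in}A_m=\partial_{out}A_m$ for every $m$; equivalently, every fiber $A_m$ is a completely thin annulus. The lemma further guarantees that the map $\varphi_{out}^{-1}\circ \varphi_{in}:S^1\to S^1$ is independent of $m$, hence equals $\psi$ on the nose.

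Finally, I would invoke the description of the embedding~\eqref{eq: Diff ---> Ann}: a completely thin annulus with parametrisations $\varphi_{in},\varphi_{out}$ represents, as an element of $\Ann$, the class of $A_{\psi}$ with $\psi=\varphi_{out}^{-1}\circ\varphi_{in}$. Since this class is independent of $m$, the composition $M\to \underline A\to \Ann$ sending $m\mapsto [A_m]$ is constant, as required. The only subtle step is the global propagation in the second paragraph; but it is essentially automatic because once the lemma gives $\varphi_{in}^{-1}(Z_m)=S^1$ on a neighborhood, the same hypothesis (a thin fiber) holds at every nearby point, and a standard open-and-closed argument using connectedness of $M$ finishes the job.
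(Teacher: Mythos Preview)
Your argument is correct and follows the same route as the paper: apply Lemma~\ref{lem: hol family of thin is constant}, observe that a fiber in $\Diff(S^1)$ has $\varphi_{in}^{-1}(Z_{m_0})=\varphi_{out}^{-1}(Z_{m_0})=S^1$, conclude that every fiber is completely thin with the same gluing diffeomorphism $\varphi_{out}^{-1}\circ\varphi_{in}$, and then identify the fiber via~\eqref{eq: Diff ---> Ann}.

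One remark: the ``global propagation'' you flag as the only subtle step is not actually needed. Lemma~\ref{lem: hol family of thin is constant} is already stated and proved for the whole connected manifold $M$, not just for a neighbourhood of $m_0$ (its proof uses the open mapping theorem together with the identity principle on connected $M$). So once you know $\varphi_{in}^{-1}(Z_{m_0})=S^1$, the lemma immediately gives $\varphi_{in}^{-1}(Z_m)=S^1$ for every $m\in M$, and no open-and-closed argument is required. Your parenthetical about applying the lemma ``first on a connected neighborhood'' and then propagating can simply be deleted.
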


In the same spirit as the above lemma and corollary, we have the following result.
Let $\underline D \to M$ be a complex family of discs
indexed by a connected manifold, and
boundary parametrisation $\varphi:S^1\times M\to \underline D$.

\begin{prop}\label{prop holomorphic families of points}
Let $M$, $\underline D$, $\varphi$ be as above, and
let $a:M\to \underline D$ be a holomorphic family of points.
Then either $a(m)\in\mathring{D}_m$ $\forall m\in M$, or $a(m)\in \partial D_m$ $\forall m\in M$.
Moreover, in the latter case, $\varphi_m^{-1}(a(m))$ is constant.

The same result holds for holomorphic family of points inside holomorphic families of annuli.
\end{prop}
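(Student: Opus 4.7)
The plan is to establish the dichotomy by partitioning $M$ into the subsets
\[
M_\circ := \{m \in M : a(m) \in \mathring D_m\} \qquad \text{and} \qquad M_\partial := \{m \in M : a(m) \in \partial D_m\},
\]
and showing that both are open; connectedness of $M$ then forces one of them to be empty. The set $M_\circ$ is open by continuity of $a$ together with openness of the fiberwise interior of $\underline D$ inside $M \times \bbC$. The real content is showing $M_\partial$ is open and that $\varphi_m^{-1}(a(m))$ is locally constant there.

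To handle $m_0 \in M_\partial$, I would work in a neighborhood $V$ of $m_0$ in which the family embeds into $V \times \bbC$, writing $a(m) = (m, f(m))$ for a holomorphic $f : V \to \bbC$. Let $\theta_0 := \varphi_{m_0}^{-1}(a(m_0))$ and set
\[
\gamma(m) := \varphi_m(\theta_0), \qquad \beta(m) := \partial_\theta \varphi_m(\theta_0);
\]
both are holomorphic in $m$, and $\beta$ is nowhere vanishing (since $\varphi_m$ is an immersion). After a complex affine change of coordinates on $\bbC$, I may assume $f(m_0)=0$, $\beta(m_0)=1$, and $D_{m_0}$ lies locally on the upper side of the smooth curve $\partial D_{m_0}$ whose tangent at $0$ is the real axis. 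Consider the holomorphic function
\[
h(m) := \frac{f(m)-\gamma(m)}{\beta(m)}, \qquad h(m_0) = 0.
\]
Writing $\partial D_m$ near $\gamma(m)$ as a graph in the coordinates rotated by $\beta(m)$ produces a smooth defining function with vanishing value and first derivative at the origin and uniformly bounded curvature for $m$ close to $m_0$; the condition $f(m)\in D_m$ then translates into the inequality
\[
\mathrm{Im}(h(m)) \ge -C\,|h(m)|^2
\]
for some $C>0$ and all $m$ in a small neighborhood of $m_0$.

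I would then rule out $h\not\equiv 0$ near $m_0$ by a Taylor argument. If the leading homogeneous term of $h$ at $m_0$ were a non-zero polynomial $p_k$ of degree $k\ge 1$, plugging $m-m_0 = tw$ with $t\to 0^+$ and $w$ a unit vector into the inequality and dividing by $t^k$ would yield $\mathrm{Im}(p_k(w)) \ge 0$ for every unit $w$. Replacing $w$ by $e^{i\alpha}w$ for $\alpha\in\bbR$ would give $\mathrm{Im}(e^{ik\alpha}p_k(w)) \ge 0$ for all $\alpha$, forcing $p_k(w)=0$ for each $w$, contradicting $p_k\not\equiv 0$. Hence $h\equiv 0$ on a neighborhood of $m_0$, so $f(m)=\varphi_m(\theta_0)$ locally; this simultaneously proves openness of $M_\partial$ and local constancy of $\varphi_m^{-1}(a(m)) = \theta_0$.

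Connectedness of $M$ delivers the dichotomy, and local constancy upgrades to global constancy. The annulus case is handled identically by running the argument separately at $\partial_{in}A_m$ and $\partial_{out}A_m$: the set $M_\circ$ is still disjoint from the union of the two boundary sets and hence clopen, yielding the interior-versus-boundary dichotomy, while the same Taylor argument at either boundary produces local constancy of $\varphi_{in/out}^{-1}(a(m))$. The main technical point requiring care is the uniform curvature bound underlying the inequality on $\mathrm{Im}(h)$, but this follows from joint smoothness of $\varphi_m$ in $(m,\theta)$ together with compactness of a small closed neighborhood of $(m_0,\theta_0)$.
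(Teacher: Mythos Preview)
Your proof is correct but follows a different route from the paper's. The paper first translates the family so that the section $a$ becomes identically zero, i.e.\ replaces $f(m,\theta)$ by $f(m,\theta)-a(m)$; then, assuming $0\in\partial D_{m_0}$ with $\theta_0:=f_{m_0}^{-1}(0)$, it applies the open mapping theorem directly to the single holomorphic function $m\mapsto f_m(\theta_0)$. The values of this function lie in $\widehat D:=\bigcup_{m\in U}D_m$, which (by smoothness of the boundaries) fails to contain a neighbourhood of $0$, contradicting openness of the image unless the function is constant. This yields $a(m)=\varphi_m(\theta_0)$ in one stroke.

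Your argument instead builds the normalised displacement $h(m)=(a(m)-\varphi_m(\theta_0))/\partial_\theta\varphi_m(\theta_0)$, extracts a one-sided quadratic constraint $\mathrm{Im}(h)\ge -C|h|^2$ from the bounded curvature of the boundary graphs, and then kills each homogeneous Taylor term of $h$ via the rotation trick $w\mapsto e^{i\alpha}w$. This is essentially a by-hand proof of (a strengthening of) the open mapping theorem tailored to the geometry at hand. It is longer and requires the uniform curvature bound you flag, but it is self-contained and makes explicit exactly which second-order information about $\partial D_m$ is being used; the paper's version hides this inside the (easy but unstated) claim that $\widehat D$ misses a neighbourhood of $0$. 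Both approaches transfer to the annulus case in the same way.
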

\begin{proof}
By passing to an open cover of $M$, we may assume without loss of generality that $\underline D$ comes from a holomorphic family $f:M \times S^1 \to \bbC$ of smooth Jordan curves.
Writing $f_m:=f(m,-)$
and letting $D_m$ be the closed disc bound by $f_m$, we then have
\[
\underline D\,=\,\{(m,z)\in M\times \bbC:z\in D_m\}.
\]
Replacing $f(m,\theta)$ by $f(m,\theta)-a(m)$, we may further assume without loss of generality that $a$ is identically zero.

If $0\in \mathring D_m$ $\forall m\in M$, then we are done.
So let us assume $\exists m_0\in M$ s.t. $0\in \partial D_{m_0}$.
Let $\theta:= f_{m_0}^{-1}(0) \in S^1$. 
We claim that we then have $f_m(\theta)=0$ for all $m\in M$.
Indeed, suppose by contradiction that $f_m(\theta)$ is not identically zero as a function of $m$.
Pick a neighbourhood $U$ of $m_0$ such that $\widehat D:=\bigcup_{m\in U} D_{m}$ does not contain a neighbourhood of $0$.
By the open mapping theorem, the image of $U$ under $m \mapsto f_m(\theta)$ is open.
We conclude that there exists $m\in U$ such that $f_m(\theta)\not \in \widehat D$. Contradiction.

The analogous result for annuli is proven in an entirely parallel fashion.
\end{proof}

\section{Conformal welding}
\label{sec: conformal welding}
Recall from \eqref{eq: 1st mention of conformal welding} that the composition of annuli, or \emph{conformal welding}, can be defined as a pushout in the category of ringed spaces. In this section, we will prove that the result of welding two annuli is again an annulus. We will also show that the the welding of an annulus with a disc is a disc, and that the welding of two discs is a Riemann sphere.
We establish families versions of the above results, both for families indexed by complex manifolds, and for families indexed by real manifolds with corners.
We begin with conformal welding of discs.

\subsection{Conformal welding of discs}
 \label{sec: conformal welding discs}

The following theorem is well-known (see e.g. \cite[\S III.1.4]{Lehto87}
for the more general case of welding via quasi-symmetric maps). We prove it here in the case of smooth welding maps:

\begin{thm} \label{thm: conformal welding of discs}
(Conformal welding)\, \rm
Let $D_1$ and $D_2$ be discs, and let ${\phi:\partial D_1\to \partial D_2}$ be an orientation reversing diffeomorphism.
Then 
the ringed space $(D_1\cup_\phi D_2,\cO_{D_1\cup_\phi D_2})$, with structure sheaf
\begin{equation}\label{eq: glued sheaf discs}
\cO_{D_1\cup_\phi D_2}(U):=\big\{f:U\to \CC\,\big|\,f|_{U\cap A_i}\in\cO_{D_i}(U\cap D_i),\,\text{\small for}\;i=1,2\big\}
\end{equation}
is isomorphic to $(\bbC P^1, \cO_{\bbC P^1})$.
Moreover, the image of $\partial D_1$ (equivalently $\partial D_2$) inside $D_1\cup_\phi D_2$ is a smooth curve.
\end{thm}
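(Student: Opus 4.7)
The plan is to produce an explicit homeomorphism $W := D_1 \cup_\phi D_2 \to \bbC P^1$ that is holomorphic on each $\mathring{D}_i$, by first constructing a smooth model and then correcting it via the measurable Riemann mapping theorem.

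First I would reduce to the case $D_1=D_2=\bbD$. Each $D_i$ is biholomorphic to $\bbD$, and since $\partial D_i$ is smooth, Carath\'eodory's extension (combined with Kellogg's theorem on boundary smoothness of conformal maps onto smooth Jordan domains) promotes these biholomorphisms to maps smooth up to the boundary. Transporting $\phi$ along them reduces the theorem to the case $D_1=D_2=\bbD$, with $\phi$ a smooth orientation-reversing diffeomorphism of $S^1$.

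Next I would build a smooth model. Choose any smooth orientation-reversing diffeomorphism $\tilde\phi:\bbD\to\bbD$ extending $\phi$ (e.g.\ $\tilde\phi(re^{i\theta})=r\phi(e^{i\theta})$, smoothed near the origin), and let $\iota(z)=1/\bar z$, viewed as an orientation-reversing diffeomorphism $\bbD\to\bbD_-$ that restricts to the identity on $S^1$. Define
\[
h:W\to\bbC P^1,\qquad h|_{D_1}=\id_{\bbD_+},\qquad h|_{D_2}=\iota\circ\tilde\phi:\bbD\to\bbD_-.
\]
This is well defined on the seam because $\iota|_{S^1}=\id$ and $\tilde\phi|_{S^1}=\phi$, matching the welding identification; it is a homeomorphism, smooth on each half, and globally orientation-preserving.

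The pushforward $h_*\cO_W$ is then encoded by a Beltrami coefficient $\mu$ on $\bbC P^1$ which vanishes on $\bbD_+$ and, on $\bbD_-$, equals the Beltrami coefficient of the smooth diffeomorphism $(\iota\circ\tilde\phi)^{-1}$; in particular $\|\mu\|_\infty<1$. The measurable Riemann mapping theorem produces a quasiconformal homeomorphism $F:\bbC P^1\to\bbC P^1$ with $\bar\partial F=\mu\,\partial F$, and elliptic regularity for the Beltrami equation with smooth coefficients, applied on each closed half-disc separately, shows that $F$ is smooth on $\bbD_+$ and $\bbD_-$. The composite $F\circ h:W\to\bbC P^1$ is then a homeomorphism which is biholomorphic on each $\mathring D_i$ and smooth up to the seam from each side, giving the required isomorphism $(W,\cO_W)\cong(\bbC P^1,\cO_{\bbC P^1})$; the image $F(S^1)$ is the claimed smooth welded curve.

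The main obstacle is the cross-seam regularity. Because $\mu$ typically jumps across $S^1$, the global solution $F$ is only $C^{1,\alpha}$. Obtaining smoothness of $F$ up to the seam from each side -- and hence smoothness of the welded curve $F(S^1)$ -- is exactly where one needs Schauder-type estimates for the Beltrami equation with one-sided smooth coefficients, beyond the mere existence statement of the measurable Riemann mapping theorem.
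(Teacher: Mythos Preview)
Your approach via the measurable Riemann mapping theorem is genuinely different from the paper's, and it is the classical route to conformal welding (indeed the paper cites Lehto for the quasisymmetric case). The paper instead proceeds by singular integral operators: after the same reduction to $\bbD_\pm$, it rewrites the welding problem as a linear equation for the boundary values $f_-$ using the Hilbert transform $T$, and observes that the commutator $T-V_\phi T V_\phi^{-1}$ has a \emph{smooth} kernel (the singularities of $\tfrac{1}{w-z}$ and $\tfrac{\phi'(w)}{\phi(w)-\phi(z)}$ cancel). For $\phi$ close to the identity this makes $1-(T-V_\phi T V_\phi^{-1})$ invertible on $C^\infty(S^1)$ and yields $f_-$ directly as a smooth function; the general case is bootstrapped by factoring $\phi$ through an analytic diffeomorphism. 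The payoff is that smoothness up to the seam comes for free---the solution is produced in $C^\infty(S^1)$ from the outset---and the argument is entirely self-contained, which also makes the later families versions (Propositions~\ref{prop: welding discs in smooth families} and~\ref{prop: welding discs in holomorphic families}) straightforward.

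Your route, by contrast, front-loads the existence step (Ahlfors--Bers gives $F$ immediately) but pushes all the difficulty into the regularity step, which you correctly flag as the obstacle. The issue is slightly circular as written: on the holomorphic side $\bbD_+$, smoothness of $F$ up to $S^1$ via Kellogg/Painlev\'e requires already knowing that $F(S^1)$ is smooth; so the real work is one-sided Schauder regularity for $\bar\partial F=\mu\,\partial F$ on $\bbD_-$ with $\mu$ smooth up to $\partial\bbD_-$. This is true but not elementary, and you would need to supply or cite it precisely to close the argument. The paper's Hilbert-transform approach sidesteps this entirely.
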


\begin{proof}
By the Riemann mapping theorem,
we may assume without loss of generality that $D_1=\DD_-=\{z\in\bbC \cup\{\infty\}\,:\,|z|\ge1\}$,
$D_2=\DD_+=\{z\in\bbC \,:\,|z|\le1\}$, and
\begin{align*}
&\phi:S^1=\partial\DD_-\stackrel{\scriptscriptstyle\cong}\longrightarrow\partial\DD_+=S^1.
\end{align*}
We will construct a homeomorphism $f:\bbD_-\cup_\phi\DD_+\to \bbC P^1$ that is holomorphic on the interiors $\mathring\DD_-$ and $\mathring\DD_+$,
and smooth on $\DD_-$ and $\DD_+$ all the way to the boundary.
In order to force $f$ to be unique,
we will also insist that $f$ maps $\infty\in\DD_-$ to $\infty\in\CC P^1$, with first derivative $f'(\infty)=1$ and second derivative $f''(\infty)=0$.

Letting $f_\pm:=f|_{\DD_\pm}$, we may rephrase the problem as that of finding a pair of functions $f_+,f_-\in\cC^\infty(S^1)$
of the form
\[
\qquad f_+(z)=a_0+a_1z+a_2z^2+\ldots\qquad f_-(z)=z+b_1z^{-1}+b_2z^{-2}+\ldots
\]
that make the following diagram commute:
\begin{equation}\label{eq: conformal welding}
\tikz[scale=.55, baseline=-40]{
\fill[fill=gray!25, rounded corners=7.5, 
thick, scale=4.8]
(.1,0) +(180:.45) -- +(200:.5) -- +(220:.45) -- +(240:.5) -- +(260:.45) -- +(280:.5) -- +(300:.5) -- +(320:.45) -- +(340:.5) -- +(360:.45)
 -- +(20:.5) -- +(40:.5) -- +(60:.45) -- +(80:.5) -- +(100:.45) -- +(120:.5) -- +(140:.45) -- +(160:.5) -- cycle;
\draw[red, thick, fill=white] (.48,0) circle (1.3);
\node[scale=.9] at (2,1.25) {$\DD_-$};
\draw[thick] (6,0) circle (1.3);
\draw[thick] (11,0) circle (1.3);
\draw[blue, thick, fill=gray!25] (16,0) circle (1.3);
\node[scale=.9] at (16.4,.6) {$\DD_+$};
\node at (3.6,0) {$\supseteq$};
\node at (13.5,0) {$\subseteq$};
\node[scale=1.1] at (8.5,0) {$\longrightarrow$};
\node at (8.5,.6) {$\phi$};
\fill[fill=gray!25, rounded corners=7.5, 
thick, scale=4.8]
(.75,-1.05) +(180:.45) -- +(200:.5) -- +(220:.45) -- +(240:.5) -- +(260:.45) -- +(280:.5) -- +(300:.5) -- +(320:.45) -- +(340:.5) -- +(360:.45)
 -- +(20:.5) -- +(40:.5) -- +(60:.45) -- +(80:.5) -- +(100:.45) -- +(120:.5) -- +(140:.45) -- +(160:.5) -- cycle;
\begin{scope}[cm={.3,0,0,.3,(1.7,-5.95)}]
\draw[red, thick, fill=white, rounded corners=8, xshift=8]
(3,0)--(0,1)--(3,5)--(7,8)--(8,4)--(12,5)--(11,1)--(8,0)--(6,3)--cycle;
\draw[thick, rounded corners=8, xshift=500]
(3,0)--(0,1)--(3,5)--(7,8)--(8,4)--(12,5)--(11,1)--(8,0)--(6,3)--cycle;
\draw[blue, fill=gray!25,  thick, rounded corners=8, xshift=950]
(3,0)--(0,1)--(3,5)--(7,8)--(8,4)--(12,5)--(11,1)--(8,0)--(6,3)--cycle;
\end{scope}
\node at (6.6,-5) {$\supseteq$};
\node at (11.3,-5) {$\subseteq$};
\node[rotate=-60,scale=1.1] at (7.1,-2.7) {$\longrightarrow$};
\node[rotate=-60,scale=1.1] at (.3+1.55,-2.7) {$\longrightarrow$};
\node[rotate=-120,scale=1.1] at (10-.1,-2.7) {$\longrightarrow$};
\node[rotate=-120,scale=1.1] at (14.8-.1,-2.7) {$\longrightarrow$};
\node[scale=.9] at (7.1-.5,-2.7) {$f_-$};
\node[scale=.9] at (10+.65-.1,-2.7) {$f_+$};
\node[scale=.6, draw] (A) at (17,-3.7) {holomorphic};
\node[scale=.6, draw] (B) at (-1+.4,-3.8) {holomorphic};
\draw (A.north west) -- +(-.6,.42);
\draw (B.north east) -- +(.75,.5);
}
\end{equation}
Let $\cH=\big\{\sum_{n\ge 0} a_nz^n\big\}\subset \cC^\infty(S^1)$ be the Hardy space,
let $\cH^\bot=\big\{\sum_{n< 0} b_nz^n\big\}$ be its orthogonal complement\bigskip, and let
$T:\cC^\infty(S^1)\to \cC^\infty(S^1)$ be the Hilbert transform, defined by
\[
Tf \,:= \tfrac1{2\pi i}\;\text{\sc p.v.}\!\int_{w\in S^1}\frac{f(w)}{w-z}dw\,=\tikz[baseline = -3]{\draw(0,.1) -- (.5,.5)(0,-.1) -- (.5,-.5);}\,\,\begin{matrix} \tfrac12 f & \text{\rm if}\,\,\,f\in\cH^{\phantom{\bot}} \\[5mm] -\tfrac12f  & \text{\rm if}\,\,\,f \in\cH^\bot\end{matrix}
\]
where the principal value is defined as $\text{\sc p.v.}\!\int_{w\in S^1} := \underset{\varepsilon\to 0}\lim\int_{w\in S^1\!\!\;,\!\;|w-z|\ge \varepsilon}$.

Letting $V_\phi:\cC^\infty(S^1)\to \cC^\infty(S^1)$ be the operator of precomposition by $\phi$,
we may rewrite the conditions in
\eqref{eq: conformal welding} as\smallskip
\begin{equation}\label{eq2: conformal welding}
V_\phi f_+=f_-\quad\,\,\, T f_+ = \tfrac12 f_+\quad\,\,\, T f_- = z-\tfrac12 f_-\,.
\smallskip
\end{equation}
Any solution of these equations must satisfy $V_\phi TV_\phi^{-1}f_-=V_\phi Tf_+=\tfrac12 V_\phi f_+ =\tfrac12f_-$.
In particular, $(T-V_\phi TV_\phi^{-1})f_-=z-f_-$.
Hence
\begin{equation}\label{eq: (1-(T-V_phi TV_phi^-1))f_-=z}
\Big[1-(T-V_\phi TV_\phi^{-1})\Big](f_-)=z.
\end{equation}
The singular integral can be rewritten as
\begin{align*}
V_\phi TV_\phi^{-1}f(z)
&=TV_\phi^{-1}f(\phi(z))\\
&=\tfrac1{2\pi i}\;\text{\sc p.v.}\!\int \frac{V_\phi^{-1}f(w)}{w-\phi(z)}dw\\
&=\tfrac1{2\pi i}\;\text{\sc p.v.}\!\int \frac{f(\phi^{-1}(w))}{w-\phi(z)}dw
=\tfrac1{2\pi i}\;\text{\sc p.v.}\!\int \frac{f(u)}{\phi(u)-\phi(z)}\phi'(u)du,
\end{align*}
so the integral kernel of $T-V_\phi TV_\phi^{-1}$ is given by 
\[
K(z,w)=\frac1{w-z}-\frac{\phi'(w)}{\phi(w)-\phi(z)}.
\]
The singularities of $\frac1{w-z}$ and $\frac{\phi'(w)}{\phi(w)-\phi(z)}$ exactly cancel out, so $K(z,w)$ is a smooth function, and
$T-V_\phi TV_\phi^{-1}$ a smoothing operator.
For $\phi$ close enough to the identity, the operator $1-(T-V_\phi TV_\phi^{-1})$ is therefore invertible, and we may solve Equation \eqref{eq: (1-(T-V_phi TV_phi^-1))f_-=z}
by simply writing
\[
f_-:=\Big[1-(T-V_\phi TV_\phi^{-1})\Big]^{-1}(z).
\]
It remains to check that $f_-$ as above together with $f_+:=V_\phi^{-1}f_-$ form a solution of the conformal welding problem \eqref{eq2: conformal welding}.
This is obvious when $\phi$ is analytic (because then \eqref{eq: conformal welding} admits a unique solution, by the Riemann uniformization theorem), and follows by continuity for arbitrary smooth $\phi$.
This finishes the proof of the theorem for $\phi$ close to the identity.

When $\phi$ is not necessarily close to the identity, write it as a composite $\phi=\phi''\circ\phi'$ of an analytic diffeomorphism $\phi'$ followed by a diffeomorphism $\phi''$ that is close to the identity.
The surface $\DD_-\cup_\phi\DD_+$ can be obtained from $\DD_-\cup_{\phi'}\DD_+\cong\CC P^1$ by cutting it along an analytically embedded circle $S\subset \bbC P^1$ (the image of $\partial\DD_-$) and regluing the two halves using $\phi''$.
Locally around $S$, this is equivalent to the problem of constructing $\DD_-\cup_{\phi''}\DD_+$, which we have just solved above.

It remains to identify the sheaf \eqref{eq: glued sheaf discs} with the sheaf of holomorphic functions on the welded surface.
This follows from the general fact (a standard application of Morera's theorem) that if $S\subset \CC$ be a smooth curve, $U\subset \CC$ is an open subset, and $f:U\to \CC$ is a continuous function whose restriction to $U\setminus S$ is holomorphic, then $f$ is holomorphic.
\end{proof}


Our next goal is to show that conformal welding of discs may be performed in families, both complex and real.
All the steps in the proof of Theorem~\ref{thm: conformal welding of discs} may be performed in both complex and real families, with one notable exception.
The Riemann mapping theorem, invoked at the very beginning of the proof, only admits a version for real families (Theorem~\ref{thm Riemann mapping theorem in families}), but no version for complex families:

\begin{thm}[{\cite[Thm 28.1 (and Thm 8.2)]{Bell92}}]   \label{thm Riemann mapping theorem in families}
Let $f:M\times S^1\to \bbC$ be a smooth family of smooth Jordan curves, parametrised by some manifold $M$.
Let $m\mapsto a_m:M \to \bbC$ be smooth family of points in the interiors of the discs $D_m$ bounded by those curves, and let $m\mapsto t_m:M \to S^1$ be a smooth family of tangent directions at those points.
For each $m\in M$, let
\[
F_m:\bbD^2 \to D_m
\]
be the solution of the Riemann mapping problem uniquely specified by $F_m(0)=a_m$, and $\partial_z F_m(0)\in \mathbb R_+ t_m$.
Then the maps $F_m$ assemble to a map $F: M\times \bbD^2 \to \bbC$, which is fiberwise holomorphic in the interior, and smooth all the way to the boundary, with everywhere non-zero derivative. \hfill $\square$
\end{thm}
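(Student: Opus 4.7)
The plan is to reduce the family Riemann mapping theorem to solvability of the Beltrami equation with smooth parametric dependence, together with boundary regularity of Kellogg-Warschawski type in families. The argument is local on $M$, so I would fix $m_0 \in M$ and work in a coordinate neighbourhood of $m_0$.

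First I would build an auxiliary smooth family of diffeomorphisms $\Phi_m \colon \bbD^2 \to D_m$, not yet holomorphic, satisfying $\Phi_m|_{S^1} = f(m,\cdot)$. Since $f(m,\cdot)$ varies smoothly in $m$, one can extend it inwards to a smooth family of diffeomorphisms of $\bbD^2$ by interpolating against a fixed model interior map via a cutoff function supported near the boundary. I would arrange in addition that $\Phi_m(0) = a_m$ and $\partial_z \Phi_m(0) \in \mathbb{R}_+\, t_m$. Pulling back the complex structure of $D_m$ via $\Phi_m$ then produces a smooth family of Beltrami coefficients
\[
\mu_m := \frac{\partial_{\bar z}\Phi_m}{\partial_z \Phi_m} \in C^\infty(\bbD^2), \qquad \|\mu_m\|_\infty < 1.
\]

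Next, for each $m$ I would solve the Beltrami equation $\partial_{\bar z} h_m = \mu_m\, \partial_z h_m$ on $\bbD^2$ with the normalisation $h_m(0) = 0$ and $h_m'(0) \in \mathbb{R}_+$, producing a quasiconformal self-homeomorphism $h_m$ of $\bbD^2$ that straightens out the pulled-back complex structure. Then
\[
F_m := \Phi_m \circ h_m^{-1} \colon \bbD^2 \longrightarrow D_m
\]
is fiberwise holomorphic by construction, with $F_m(0) = \Phi_m(0) = a_m$. The chain rule applied at $0$ gives $\partial_z F_m(0)\cdot h_m'(0) = \partial_z \Phi_m(0) \in \mathbb{R}_+\, t_m$, and combined with $h_m'(0) > 0$ this yields the required $\partial_z F_m(0) \in \mathbb{R}_+\, t_m$. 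Nonvanishing of $\partial_z F_m$ on all of $\bbD^2$ including the boundary follows from the fact that $F_m$ is a biholomorphism of discs together with the Hopf lemma on $S^1$.

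The main obstacle is smooth dependence of $h_m$ on $m$ \emph{up to and including the boundary}. Interior smoothness of each $h_m$, and smooth dependence on $\mu_m$ there, are classical elliptic regularity plus the Ahlfors-Bers theorem: the operator $(\mathrm{id} - \mu\,\partial_z\bar\partial^{-1})^{-1}$ inverting the Beltrami equation depends smoothly on $\mu$ in the appropriate Fr\'echet topology. What additionally needs to be justified is that smooth dependence of $\mu_m$ in the $C^\infty$ topology on the \emph{closed} disc upgrades to smooth dependence of $h_m$ up to the boundary; this is exactly what Bell's Theorems 8.2 and 28.1 provide via a Bergman/Szeg\H{o} kernel representation of the Riemann map, and together these ingredients close the proof.
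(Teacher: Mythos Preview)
The paper does not prove this theorem: it is quoted from Bell's book and closed with a bare $\square$, so there is no argument in the paper to compare your proposal against.

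Your Beltrami-equation strategy is a reasonable alternative route in principle, but as written the proposal is circular. The entire difficulty of the theorem is smooth dependence of the Riemann map on the parameter \emph{up to the boundary}; interior smooth dependence via Ahlfors--Bers is indeed classical and easy. At precisely that decisive boundary step you write ``this is exactly what Bell's Theorems 8.2 and 28.1 provide,'' but Theorems 8.2 and 28.1 \emph{are} the statement being asserted here. So you have reduced the theorem to itself rather than proved it.

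If you want an independent argument along your lines, you must supply the parametric boundary regularity by other means: for instance, by showing directly that the solution operator for the Beltrami equation on the closed disc, acting on $C^\infty(\overline{\bbD})$ data with $\|\mu\|_\infty<1$, depends smoothly on $\mu$ in the Fr\'echet topology all the way to $S^1$ (this requires uniform-in-parameter Kellogg--Warschawski type estimates, not just the $L^p$ theory behind Ahlfors--Bers). Bell's own proof does not go through Beltrami at all; it represents the derivative of the Riemann map in terms of the Szeg\H{o} kernel and shows that the Szeg\H{o} projection varies smoothly with the domain, which gives boundary regularity and parameter dependence simultaneously.
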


The version of Theorem~\ref{thm: conformal welding of discs} for families indexed by real manioflds (with corners) requires no new ideas:

\begin{prop}
\label{prop: welding discs in smooth families}
Let $\underline D_1\to M$ and $\underline D_2\to M$ be two smooth families of discs, and let ${\phi:\partial D_1\to \partial D_2}$ be an orientation reversing fiberwise diffeomorphism.
Then
\[
(\underline D_1\cup_\phi \underline D_2,\cO_{\underline D_1\cup_\phi \underline D_2})
\]
is isomorphic to $\CC P^1\times M$ together with its sheaf of fiberwise holomorphic smooth functions.
Moreover, the image of $\partial \underline D_1$ inside $\underline D_1\cup_\phi \underline D_2$ is a smooth family of smooth curves.
\end{prop}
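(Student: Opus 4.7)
The plan is to run the proof of Theorem~\ref{thm: conformal welding of discs} in families, checking at each step that the constructions depend smoothly on the parameter $m\in M$. First I would apply the family Riemann mapping theorem (Theorem~\ref{thm Riemann mapping theorem in families}), in the fibrewise form, to choose fibrewise biholomorphisms $\underline D_1 \cong \DD_- \times M$ and $\underline D_2 \cong \DD_+ \times M$ that are jointly smooth on $M$ and send the distinguished boundary parametrisations to the identity. After this reduction the welding map becomes a smooth family $\{\phi_m\}_{m\in M}$ of orientation-preserving diffeomorphisms of $S^1$, and the task is to produce smooth families $f_\pm^m$ on $S^1$ satisfying \eqref{eq2: conformal welding} fibrewise, with the prescribed expansions at $0$ and $\infty$.

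Next I would treat the case where $\phi_m$ is uniformly close to the identity on a neighbourhood $U$ of a chosen $m_0\in M$. The integral kernel
\[
K_m(z,w) = \frac{1}{w-z} - \frac{\phi_m'(w)}{\phi_m(w)-\phi_m(z)}
\]
is jointly smooth in $(m,z,w)$, so $T - V_{\phi_m}TV_{\phi_m}^{-1}$ is a smooth family of smoothing operators on $C^\infty(S^1)$, and $1-(T - V_{\phi_m}TV_{\phi_m}^{-1})$ is a smooth family of invertible operators on $U$. Inverting yields $f_-^m := [1-(T - V_{\phi_m}TV_{\phi_m}^{-1})]^{-1}(z)$ and $f_+^m := V_{\phi_m}^{-1}f_-^m$ as smooth $M$-families. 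That these solve the welding problem, and not merely the algebraic identity \eqref{eq: (1-(T-V_phi TV_phi^-1))f_-=z}, follows by continuity from the analytic case, exactly as in the single-disc proof.

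For general $\phi_m$ I would use the factorisation trick from the proof of Theorem~\ref{thm: conformal welding of discs}: near a given $m_0$, pick a real-analytic diffeomorphism $\phi'_0$ of $S^1$ such that $\phi''_{m} := \phi_m \circ (\phi'_0)^{-1}$ is close to the identity for all $m$ in a neighbourhood of $m_0$. Welding via the constant family $\phi'_0$ produces a family trivially isomorphic to $\bbC P^1 \times M$ containing a common analytic curve $S\subset\bbC P^1$, and the previous step allows us to reweld along $S$ using $\phi''_m$. Local solutions patch to a global isomorphism because uniqueness of the normalised Riemann uniformisation forces them to agree on overlaps. The identification of the structure sheaves is then the same fibrewise Morera argument as in Theorem~\ref{thm: conformal welding of discs}, and smoothness of the image of $\partial \underline D_1 \subset \bbC P^1 \times M$ follows from the joint smoothness of $f_-^m$.

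The main obstacle I expect is bookkeeping the joint smoothness in $m$ at each step, in particular the smooth dependence of the inverse $[1-(T - V_{\phi_m}TV_{\phi_m}^{-1})]^{-1}$ on $m$. This is handled by the standard fact that in the Fr\'echet algebra of bounded operators on $C^\infty(S^1)$, the map $A\mapsto A^{-1}$ is smooth on the open set of invertible elements, combined with smoothness of the kernel $K_m$. Everything else in the family argument is parallel to the non-family proof.
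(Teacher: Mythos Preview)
Your proposal is correct and follows essentially the same route as the paper's proof: reduce to trivial families via the family Riemann mapping theorem, then observe that the smoothing operator $T-V_{\phi_m}TV_{\phi_m}^{-1}$ depends smoothly on $m$, hence so do its inverse and the solutions $f_\pm^m$, with the sheaf identification via Morera. The paper's proof is terser---it simply asserts smooth dependence of the inverse without separating out the close-to-identity case from the general case---whereas you explicitly invoke the factorisation trick and discuss patching local solutions; this extra care is not wrong, but the paper absorbs it into the phrase ``in the proof of Theorem~\ref{thm: conformal welding of discs}.''
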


\begin{proof}
By the families version of the Riemann mapping theorem (Theorem~\ref{thm Riemann mapping theorem in families}), we may assume without loss of generality that $\underline D_1$ and $\underline D_2$ are trivial families $\bbD\times M \to M$.
In the proof of Theorem~\ref{thm: conformal welding of discs}, the smoothing operator $T-V_\phi T V_\phi^{-1}$ depends smoothly on $\phi$,
and therefore so does the inverse of $1-(T-V_\phi T V_\phi^{-1})$.
The solutions $f_+$ and $f_-$ of the equations \eqref{eq2: conformal welding} therefore also depend smoothly on $\phi$.
So the map $\underline D_1 \cup_\phi \underline D_2 \to \CC P^1 \times M$ given on fibers by the conformal welding map of Theorem~\ref{thm: conformal welding of discs} is smooth, as is the family of curves obtained by applying $f_-$ fiberwise to $\partial \underline D_1$.

It remains to identify the welded sheaf (the structure sheaf on the pushout of ringed spaces) with the sheaf of fiberwise holomorphic functions on $\CC P^1\times M$.
This follows from the fact that if $S\subset \bbC$ is a smooth curve, $U\subset \bbC$ is an open subset, and $f:U\to \bbC$ is a continuous function whose restriction to $U\setminus S$ is holomorphic, then $f$ is holomorphic.
\end{proof}

We now treat the case of holomorphic families:

\begin{prop}
\label{prop: welding discs in holomorphic families}
Let $M$ be a complex manifold,
let $f_+:\underline D_+\to M$ and $f_-:\underline D_-\to M$ be holomorphic families of discs with parametrised boundary
(meaning that the boundary parametrisations 
$\phi_\pm: S^1\times M\to D_\pm$ are smooth, and the functions $\phi_\pm(\theta,-):M\to D_\pm$ are holomorphic), and let us
abbreviate $\underline D_+\cup_{\phi_-,S^1\times M,\phi_+} \underline D_-$ by $\underline D_+\cup_{\phi_\pm} \underline D_-$. Then the we have an isomorphism of ringed spaces
\[
\underline D_+\cup_{\phi_\pm} \underline D_-
\,\,\cong\,\, \CC P^1\times M,
\]
compatible with the projections to $M$.
Moreover, the map
\begin{equation}\label{eq: moreover the map...}
S^1\times M\xrightarrow{\phi_+} \partial \underline D_+
\hookrightarrow \underline D_+\cup_{\phi_\pm} \underline D_-\cong \CC P^1\times M
\end{equation}
is a holomorphic family of smooth curves.
\end{prop}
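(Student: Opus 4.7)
My plan is to combine the smooth-family version of conformal welding (Proposition~\ref{prop: welding discs in smooth families}) with a Cauchy integral-formula argument to upgrade it to a holomorphic isomorphism, reducing the essential work to proving the ``Moreover'' statement about the welded boundary curve.

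Since being an isomorphism of ringed spaces is a local condition on $M$, I may shrink $M$ and assume that $\underline D_\pm$ are embedded in $\bbC \times M$ as families bounded by holomorphic families of Jordan curves $\gamma_\pm(\theta, m)$, with $\phi_\pm = \gamma_\pm$. Applying Proposition~\ref{prop: welding discs in smooth families} to the underlying real-smooth families produces a smooth, fiberwise-biholomorphic isomorphism $F: \underline D_+ \cup_{\phi_\pm} \underline D_- \to \bbC P^1 \times M$ over $M$, compatible with the sheaves of fiberwise-holomorphic smooth functions. What remains is to promote $F$ to a holomorphic isomorphism of ringed spaces, i.e.\ to show that the restrictions $F|_{\underline D_\pm}$ are holomorphic on the interior in both fiber and base variables.

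The crucial step is establishing that the welded boundary curve $(\theta, m) \mapsto F(\phi_+(\theta, m)) \in \bbC P^1 \times M$ is holomorphic in $m$ for each $\theta$. For this, I would adapt the integral-equation argument from the proof of Theorem~\ref{thm: conformal welding of discs}, but formulated intrinsically on the varying boundary curves $\gamma_\pm(\cdot, m)$ rather than after a Riemann-map reduction to fixed unit circles. Concretely, the common boundary value $g^m = F \circ \phi_+(\cdot, m)$ is characterised by being the boundary trace of a holomorphic function on $\underline D_{+,m}$ that differs, modulo a normalising term fixed via local holomorphic sections into the interiors of $\underline D_\pm$ (which exist after shrinking $M$), from the boundary trace of a holomorphic function on $\underline D_{-,m}$. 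Expressed in terms of the Hilbert transforms along the curves $\gamma_\pm(\cdot, m)$, this becomes an equation $[1 - K^m] g^m = (\text{source})$ where $K^m$ has an integral kernel given by the difference of two Cauchy kernels along $\gamma_+$ and $\gamma_-$; this kernel is smoothing and depends holomorphically on $m$ because $\gamma_\pm$ do. Consequently $(1 - K^m)^{-1}$ depends holomorphically on $m$, giving the desired holomorphicity of $g^m$ in $m$.

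Given the boundary holomorphicity, I would extend holomorphicity to the interior by Cauchy's integral formula: for $z \in \mathring{\underline D}_{+, m}$,
\[
F(z, m) = \frac{1}{2\pi i} \int_{S^1} \frac{F(\gamma_+(\theta, m), m)}{\gamma_+(\theta, m) - z} \, \partial_\theta \gamma_+(\theta, m) \, d\theta,
\]
and the integrand is jointly holomorphic in $(z, m)$ near any interior point, so $F|_{\mathring{\underline D}_+}$ is holomorphic in $(z, m)$. The argument for $F|_{\underline D_-}$ is identical after expansion at the normalising point. This shows that $F$ and $F^{-1}$ pull back local holomorphic functions to sections of the respective structure sheaves, giving the holomorphic ringed-space isomorphism; the ``Moreover'' statement is the key step itself.

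The main obstacle is exactly the step that forces the argument to remain intrinsic: the tempting shortcut of reducing to trivial families via Theorem~\ref{thm Riemann mapping theorem in families} fails because that Riemann-mapping trivialisation is only smooth in $m$, so the resulting welding map on the unit circle would depend only smoothly on $m$ and all holomorphic structure would be lost. One must therefore work directly on the varying curves $\gamma_\pm(\cdot, m)$ and verify holomorphic dependence of the welding operator $1 - K^m$ on $m$ by hand, which is precisely where the hypothesis that $\phi_\pm$ are holomorphic in $m$ enters the argument.
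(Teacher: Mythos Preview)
Your strategy is sound in outline but takes a different route from the paper, and it leaves one genuine gap. The paper also begins from the smooth-family welding (Proposition~\ref{prop: welding discs in smooth families}) to obtain smoothness of $m\mapsto F_\pm^m$, but then proves holomorphicity by a direct complex-linearity check on the derivative rather than by reworking the integral equation. Concretely, the directional derivatives $\partial_v F_+^m$, $\partial_v F_-^m$ are uniquely determined by the linear system obtained from differentiating the welding and normalisation conditions; one then observes that $(i\partial_v F_+, i\partial_v F_-)$ satisfies the system characterising $(\partial_{iv}F_+, \partial_{iv}F_-)$, using only $\partial_{iv}\varphi_\pm = i\,\partial_v\varphi_\pm$. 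This gives $\partial_{iv}F_\pm = i\,\partial_v F_\pm$ and hence holomorphicity of $F_\pm$ in $m$ immediately. In that argument the ``Moreover'' clause is a trivial corollary at the end (a composition of holomorphic maps), rather than the central step you build everything on.

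Your integral-equation route can be made to work, but the sentence ``Consequently $(1-K^m)^{-1}$ depends holomorphically on $m$'' hides a real issue: in the proof of Theorem~\ref{thm: conformal welding of discs}, invertibility of the analogous operator is only shown for welding maps close to the identity, with the general case handled by a factorisation through an analytic diffeomorphism that does not transfer to your intrinsic formulation on varying curves. You would need to argue separately that $1-K^m$ is invertible for every $m$ (e.g.\ $K^m$ smoothing $\Rightarrow$ $1-K^m$ Fredholm of index zero, with injectivity supplied by uniqueness of the normalised welding), before concluding holomorphic dependence of the inverse. The paper's complex-linearity argument sidesteps this entirely, since it never needs to invert anything beyond what the smooth-family result already delivers.
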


\begin{proof}
Let $D^m_\pm:=f_\pm^{-1}(m)$.
We assume without loss of generality that $\underline D_+$ and $\underline D_-$ are determined by holomorphic families of smooth Jordan curves $\varphi_\pm:m\mapsto \varphi_\pm^m:M\to C^\infty(S^1,\bbC)$,
in the sense that $D_+^m \subset \bbC$ is the disc bound by $\varphi_+^m$,
and $D_-^m \subset \bbC\cup \{\infty\}$ is the co-disc bound by $\varphi_-^m$.
We also assume that $0\in D_+^m$ for each $m\in M$.

For each $m\in M$,
the uniformising map $F^m : D_+^m \cup D_-^m \to \bbC P^1$ is the union of maps
$F_+^m : D_+^m \to \bbC$ and $F_-^m : D_-^m \to \bbC \cup \{\infty\}$ characterised by
\begin{align*}
&	F_+^m(z) = a_0 + a_1z + a_2z^2 + \ldots\,\,  \text{around zero, and analytically extends to $D_+^m$}
\\&	F_-^m(z) = z^{-1} + b_1z + b_2z^2 + \ldots\,\,  \text{around $\infty$,  and analytically extends to $D_-^m$}
\\&	F_+^m \circ \varphi_+^m = F_-^m \circ \varphi_-^m.
\end{align*}
Our goal is to show that the isomorphism
$F_+\cup F_-:D_+\cup_{\phi_\pm} D_-\cong \CC P^1\times M$
is holomorphic on each of the two pieces.
We do this by showing that the map $M \to \CC[[z]]$ which sends $m \in M$ to the power series expansion of $F_+^m$ about $0$ is holomorphic, and similarly for $F_-^m$.
Note that these maps are smooth by Proposition~\ref{prop: welding discs in smooth families}, so it suffices to verify that the induced map on tangent spaces is complex linear.

Given a tangent vector $v\in T_mM$, the directional derivatives $\partial_v(F_+)$ and $\partial_v(F_-)$ are characterised by 
\begin{align*}
&	\partial_v F_+(z) = \partial_v a_0 + \partial_v a_1 z + \partial_v a_2 z^2 + \ldots
\\&	\partial_v F_-(z) = z^{-1} + \partial_v b_1 z + \partial_v b_2 z^2 + \ldots
\\&	dF_+ \circ \partial_v \varphi_+ + \partial_v F_+ \circ \varphi_+ = dF_- \circ \partial_v \varphi_- + \partial_v F_- \circ \varphi_-
\end{align*}
(where $\partial_v \varphi_+$ and $\partial_v \varphi_-$ are the directional derivatives of $\varphi_+$ and $\varphi_-$).
Since $i\partial_vF_+$ and $i\partial_vF_-$ satisfy the equations characterizing $\partial_{iv} F_+$ and $\partial_{iv} F_-$, we have $i\partial_vF_\pm=\partial_{iv}F_\pm$, as desired.

It remains to identify the welded sheaf with the sheaf $\cO_{\CC P^1\times M}$ of holomorphic functions on $\CC P^1\times M$.
Once again, this follows from the general fact that if a function is continuous everywhere, and holomorphic away from a real hypersurface, then it is holomorphic everywhere.

Finally, the map $F_+\circ\phi_+:S^1\times M\to \bbC P^1\times M$ 
in \eqref{eq: moreover the map...} is holomorphic in the second variable because 
$F_+(\phi_+(\theta,m))=F_+(\varphi_+^m(\theta))$,
and both $F_+$ and $\varphi_+$ are holomorphic.
\end{proof}

\subsection{Conformal welding of annuli}
\label{sec: conformal welding annuli}

Recall the definitions of an \emph{annulus} and a \emph{disc} --- Definitions~\ref{def: annulus} and~\ref{def: univ Teich space}.
Our next goal is to check that the operations $\cup:\Ann\times\Ann\to \Ann$, and $\cup:\Ann\times\,\cT\to \cT$ are well defined, where $\cT$ is universal Teichm\"uller space.

\begin{prop}\label{prop: conformal welding of annuli}
Let $A_1, A_2$ be annuli with boundary parameterizations $\varphi_{in/out}^i:S^1 \to \partial_{in/out} A_i$, and 
let $\phi:=\varphi_{out}^2 \circ (\varphi_{in}^1)^{-1}:\partial_{in}A_1\to \partial_{out}A_2$.
Then $A_1\cup_\phi A_2$, with the sheaf of rings $\cO_{A_1\cup_\phi A_2}$ given by
\begin{equation}\label{eq: glued sheaf}
\cO_{A_1\cup_\phi A_2}(U):=\big\{f:U\to \CC\,\big|\,f|_{U\cap A_i}\in\cO_{A_i}(U\cap A_i),\,\text{\small for}\;i=1,2\big\}
\end{equation}
is again an annulus.

When equipped with the boundary parameterizations
$\varphi_{in}^2:S^1\to \partial_{in}A_2=\partial_{in}(A_1\cup_\phi A_2)$ and $\varphi_{out}^1:S^1\to \partial_{out}A_1=\partial_{out}(A_1\cup_\phi A_2)$,
it defines an element of $\Ann$ denoted $A_1\cup A_2$
\end{prop}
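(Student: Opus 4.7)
My plan is to reduce Proposition~\ref{prop: conformal welding of annuli} to the disc welding theorem (Theorem~\ref{thm: conformal welding of discs}) by capping $A_1\cup_\phi A_2$ at both ends to form a topological sphere, whose complex structure can then be determined from the disc case.

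Assuming without loss of generality (via Definition~\ref{def: annulus}) that $A_1,A_2\subset\bbC$ are embedded annuli with $A_i=D^i_{out}\setminus\mathring D^i_{in}$, let $F_1:=\bbC P^1\setminus\mathring D^1_{out}$ and $D_2:=D^2_{in}$. Both are discs in the sense of Definition~\ref{def: univ Teich space} (by the Riemann mapping theorem applied to each smooth Jordan domain). Form the ringed space
\[
S \,:=\, F_1\cup_{\id}A_1\cup_\phi A_2\cup_{\id}D_2 \,=\, D^*_1\cup_\phi D^*_2,
\]
where $D^*_1 := F_1\cup_{\id}A_1$ and $D^*_2 := A_2\cup_{\id}D_2$. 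The crucial intermediate claim is that $D^*_1$ and $D^*_2$ are themselves discs with their pushout structure sheaves. Topologically, $D^*_1\cong\bbC P^1\setminus\mathring D^1_{in}$ and $D^*_2\cong D^2_{out}$ are complex manifolds with smooth boundary, each biholomorphic to $\bar\bbD$. To see that the pushout sheaf on $D^*_1$ matches the standard disc sheaf, I would apply Morera's theorem across the smooth curve $\partial_{out}A_1$: continuity, together with holomorphicity on each side, forces holomorphicity wherever this curve lies in the interior of $D^*_1$, namely away from the thin set $Z_1:=\partial_{in}A_1\cap\partial_{out}A_1$. The points of $Z_1$ end up on the final boundary $\partial D^*_1=\partial_{in}A_1$, where only continuity and smoothness on $\partial_{in}A_1$ are required---both supplied by $\cO_{A_1}$. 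The argument identifying $D^*_2$ as a disc is analogous.

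With $D^*_1$ and $D^*_2$ established as discs, Theorem~\ref{thm: conformal welding of discs} yields an isomorphism of ringed spaces $\Psi:S\xrightarrow{\cong}\bbC P^1$. Under $\Psi$, the subspace $A_1\cup_\phi A_2 = S\setminus(\mathring F_1\cup\mathring D_2)$ is sent to the complement in $\bbC P^1$ of the two embedded open discs $\Psi(\mathring F_1)$ and $\Psi(\mathring D_2)$, whose interiors are disjoint (as $F_1\cap D_2=\emptyset$ in $S$). Post-composing with a M\"obius transformation $M$ that sends some point of $\Psi(\mathring F_1)$ to $\infty$ produces an embedded annulus in $\bbC$, with $\bbC P^1\setminus M\Psi(\mathring F_1)\subset\bbC$ as its outer disc and $M\Psi(D_2)$ as its inner disc; this embedded annulus is isomorphic to $A_1\cup_\phi A_2$ as a ringed space, with outer and inner boundary parametrizations $\varphi_{out}^1$ and $\varphi_{in}^2$ inherited from $A_1$ and $A_2$. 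The principal technical obstacle is the sheaf-theoretic verification that $D^*_1$ and $D^*_2$ qualify as discs, particularly at the thin points where the welded curve meets the final boundary tangentially rather than crossing transversally into the interior.
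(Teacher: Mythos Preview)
Your approach is essentially the paper's: cap the two annuli to obtain two discs $D^*_1,D^*_2$, weld these via Theorem~\ref{thm: conformal welding of discs}, and extract $A_1\cup_\phi A_2$ as a region in the resulting $\bbC P^1$. The paper is slightly more direct in that, once $A_1$ and $A_2$ are embedded, the capped objects are simply the explicit subsets $\bbC P^1\setminus\mathring{D}^1_{in}$ and $D^2_{out}$ of $\bbC P^1$; there is no need to build them as pushouts and then verify they are discs.

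Where your write-up diverges from the paper is in identifying the principal technical point. Showing that $D^*_1$ and $D^*_2$ are discs is nearly immediate (they are explicit smooth-boundary Jordan domains in $\bbC P^1$), and the sheaf check at thin points of $\partial_{out}A_1$ is a routine Morera argument combined with Theorem~\ref{thm: alternative description of O_A}. The step that actually requires care, and which you pass over by simply asserting that $\Psi(\mathring F_1)$ and $\Psi(\mathring D_2)$ are ``embedded open discs'', is verifying that their boundaries $\Psi(\partial_{out}A_1)$ and $\Psi(\partial_{in}A_2)$ are \emph{smooth} Jordan curves in $\bbC P^1$. Theorem~\ref{thm: conformal welding of discs} only gives that $\Psi$ is a homeomorphism which is holomorphic on interiors and smooth up to the welding seam $\partial D^*_1=\partial_{in}A_1$; to conclude that the image of the \emph{other} curve $\partial_{out}A_1\subset D^*_1$ (which touches the seam precisely at the thin points) is smoothly embedded, one needs to know that $\Psi|_{D^*_1}$ is a diffeomorphism up to the boundary, with nowhere-vanishing derivative. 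The paper supplies this via the Riemann mapping theorem for simply connected domains with smooth boundary \cite[Thm.~8.2]{Bell92}. You should add this step; without it the image region is not yet known to be an embedded annulus in the sense of Definition~\ref{def: annulus}. The final identification of the glued sheaf~\eqref{eq: glued sheaf} with the embedded-annulus sheaf then follows, as in the paper, from Morera's theorem across the smooth seam.
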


\begin{proof}
Write $A_1$ as $D_1\setminus \mathring D_1'$ for some discs $D_1'\subset D_1\subset \bbC$.
Similarly, write $A_2=D_2\setminus \mathring D_2'$ for $D_2'\subset D_2\subset \bbC P^1$, where we now insist that $\infty \in D_2'$.
By Theorem~\ref{thm: conformal welding of discs}, we may pick an isomorphism $\psi:D_1\cup_{\phi} D_2\to \bbC\P^1$, uniquely specified by requiring that $\psi(\infty)=\infty$, and that $\psi(z)=z+o(1)$ around $\infty \in D_2$.
The image of $\partial_{in}A_1$ under $\psi$ (equivalently, the image of $\partial_{out}A_2$) is a smoothly embedded curve.

As a consequence of the Riemann mapping theorem for simply connected domains with smooth boundary \cite[Thm. 8.2]{Bell92}, the image of $\partial_{out}A_1$ under $\psi$ is also smooth, and so is the image of $\partial_{in}A_2$:
\[
\tikz{
\fill[gray!25] (-1.6,0) arc(-180:0:1.6 and .5) arc(0:180:1.6);
\fill[gray!25] (4.4,.5) arc(-180:0:1.6 and .5) arc(0:-180:1.6);
\draw[purple, dash pattern=on 2.95pt off 2.98pt] (-1.58,0) arc(180:0:1.58 and .48);
\draw[purple] (-1.58,0) arc(-180:-130:1.58 and .48) coordinate (A);
\draw[purple] (1.58,0) arc(0:-50:1.58 and .48) coordinate (B);
\fill[pink] ($(A)+(-0.002,-0.006)$) .. controls +(-16:.5) and +(180:.5) ..  (0,.27) .. controls +(0:.5) and +(180+16:.5) .. ($(B)+(0.002,-0.006)$) arc(-50:-130:1.58 and .48);
\draw[thick, purple] ($(A)+(-0.002,-0.006)$) .. controls +(-16:.5) and +(180:.5) ..  (0,.27) .. controls +(0:.5) and +(180+16:.5) .. ($(B)+(0.002,-0.006)$);
\draw[thick, red] (-1.6,0) arc(-180:0:1.6 and .5);
\draw[thick, dashed, red] (-1.6,0) arc(180:0:1.6 and .5);
\draw (-1.6,0) arc(180:0:1.6);
\draw[brown!70!black] (4.4,.43) arc(-175:-130:1.58 and .48) coordinate (A);
\draw[brown!70!black] (7.6,.43) arc(-5:-50:1.58 and .48) coordinate (B);
\fill[pink!70!yellow] ($(A)+(-0.002,0.008)$) .. controls +(-14:.5) and +(180:.5) ..  (6,-.6) .. controls +(0:.5) and +(180+14:.5) .. ($(B)+(0.002,0.008)$) arc(-50:-130:1.58 and .48);
\draw[thick, brown!70!black] ($(A)+(-0.002,0.008)$) .. controls +(-14:.5) and +(180:.5) ..  (6,-.6) .. controls +(0:.5) and +(180+14:.5) .. ($(B)+(0.002,0.008)$);
\draw[thick, blue, fill=gray!5] (6,.5) circle (1.6 and .5);
\draw (4.4,.5) arc(-180:0:1.6);
\draw[fill=gray!25] (3.1,4) circle (1.6);
\draw[purple] (1.52,4) arc(-180:-140:1.58 and .48) coordinate (A);
\draw[purple] (4.68,4) arc(0:-60:1.58 and .48) coordinate (B);
\fill[pink] ($(A)+(-0.002,-0.006)$) .. controls +(-16:.5) and +(180:.5) ..  (2.9,4.2) .. controls +(0:.5) and +(180+16:.5) .. ($(B)+(0.002,-0.006)$) arc(-60:-140:1.58 and .48);
\draw[thick, purple] ($(A)+(-0.002,-0.006)$) .. controls +(-16:.5) and +(180:.5) ..  (2.9,4.2) .. controls +(0:.5) and +(180+16:.5) .. ($(B)+(0.002,-0.006)$);
\draw[brown!70!black] (-2.9,3.5) +(4.4,.43) arc(-175:-120:1.58 and .48) coordinate (A);
\draw[brown!70!black] (-2.9,3.5) +(7.6,.43) arc(-5:-40:1.58 and .48) coordinate (B);
\fill[pink!70!yellow] ($(A)+(-0.002,0.008)$) .. controls +(-14:.5) and +(180:.5) ..  ($(-2.9,3.5) + (6.2,-.57)$) .. controls +(0:.5) and +(180+14:.5) .. ($(B)+(0.002,0.008)$) arc(-40:-120:1.58 and .48);
\draw[thick, brown!70!black] ($(A)+(-0.002,0.008)$) .. controls +(-14:.5) and +(180:.5) ..  ($(-2.9,3.5) + (6.2,-.57)$) .. controls +(0:.5) and +(180+14:.5) .. ($(B)+(0.002,0.008)$);
\draw[thick] (1.5,4) arc(-180:0:1.6 and .5);
\draw[->] (1.63,-.33) .. controls +(-30:1) and +(160:1) .. (4.3,.7);
\node at (2.95,.6) {$\phi$};
\draw[->] (.5,2) arc(170:120:1.5);
\draw[->] (5.5,1.8) arc(10:60:1.5);
\node at (.3,2.7) {$\psi$};
\node at (5.6,2.6) {$\psi$};
\node at (3.5,5) {$\CC P^1$};
\node at (3.1,3.57) {$A$};
\node at (-1.5,1.3) {$D_1$};
\node at (.55,-1) {$A_1$};
\node at (6.6,.5) {$A_2$};
\node at (8,-.5) {$D_2$};
\draw (.4,-.7) -- +(-.3,.4);
\draw (6.4,.2) -- +(-.3,-.4);
}
\]
The space $A:=A_1\cup_\phi A_2$ can be therefore identified with the subset of $\bbC\bbP^1$ that lies between these two curves.

It remains to identify the sheaf \eqref{eq: glued sheaf} with the set of those functions on $A$ that are continuous, smooth on the boundaries, and holomorphic in the interior. The only non-trivial condition is the last one, which follows from Morera's theorem, as in the proof of Theorem \ref{thm: conformal welding of discs}.
\end{proof}

We also have conformal welding of a disc to an annulus, via a similar (but simpler) argument to that of Proposition~\ref{prop: conformal welding of annuli}:
\begin{prop}\label{prop: conformal welding of annuli with disc}
Let $A$ be an annulus and $D$ be a disc, both equipped with boundary parameterizations.
Then $(A \cup_{S^1} D,\cO_{A \cup_{S^1} D})$ is again disc.
\end{prop}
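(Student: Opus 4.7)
The plan is to mirror the proof of Proposition~\ref{prop: conformal welding of annuli}, using Theorem~\ref{thm: conformal welding of discs} to reduce the annulus--disc welding to a disc--disc welding.

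First I would realise $A$ concretely as an embedded annulus $A = D_{out}\setminus \mathring D_{in}\subset \bbC$, so that $\partial_{in}A = \partial D_{in}$, and set
\[
D_{in}^c \,:=\, (\bbC\cup\{\infty\})\setminus \mathring D_{in}\,\subset\, \bbC P^1.
\]
The space $D_{in}^c$ is itself a disc (a co-disc containing $\infty$) whose boundary equals $\partial D_{in}$ with reversed orientation. With this convention the welding map $\phi = \varphi_{in}^A\circ (\varphi^D)^{-1}$ becomes an orientation-reversing diffeomorphism $\partial D\to \partial D_{in}^c$, so Theorem~\ref{thm: conformal welding of discs} produces an isomorphism
\[
\psi\,:\,D\cup_\phi D_{in}^c \xrightarrow{\ \cong\ } \bbC P^1,
\]
normalised by $\psi(\infty)=\infty$ and $\psi(z)=z+o(1)$ near $\infty\in D_{in}^c$, and such that the image of $\partial D_{in}$ is a smooth Jordan curve.

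Since $A\subset D_{in}^c$, restricting $\psi$ gives a holomorphic embedding $A\hookrightarrow \bbC P^1$, and $A\cup_\phi D$ is identified with the closed subset $\psi(A\cup D)\subset \bbC P^1$, bounded by the Jordan curve $\psi(\partial D_{out})$. Because $\partial D_{out}$ lies in $D_{in}^c$, a domain on which $\psi$ is smooth up to the boundary and in fact holomorphic on a neighbourhood of $\partial D_{out}$ inside $\mathring D_{in}^c$, the curve $\psi(\partial D_{out})$ is smooth. Hence $A\cup_\phi D$ is realised as an embedded disc in $\bbC P^1$ with smooth boundary, i.e.\ a disc in the sense of Definition~\ref{def: univ Teich space}.

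The last step is to identify the pushout sheaf (defined as in~\eqref{eq: glued sheaf discs}) with the sheaf of holomorphic functions on the resulting disc. Exactly as at the end of the proof of Proposition~\ref{prop: conformal welding of annuli}, this reduces to a direct application of Morera's theorem across the smooth seam curve $\psi(\partial D)=\psi(\partial D_{in})$. I expect no real obstacle: the argument is strictly simpler than the annulus--annulus case, since only a single welding circle is involved, and all analytic content is already packaged in Theorem~\ref{thm: conformal welding of discs}. The only point requiring care is the orientation convention which makes $\phi$ orientation-reversing from $\partial D$ into $\partial D_{in}^c$, so that Theorem~\ref{thm: conformal welding of discs} applies on the nose.
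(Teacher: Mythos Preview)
Your approach is correct and is precisely the ``similar (but simpler)'' argument the paper has in mind: reduce the annulus--disc welding to a disc--disc welding via Theorem~\ref{thm: conformal welding of discs}, then identify the pushout sheaf using Morera's theorem across the smooth seam.

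There is one inaccuracy worth flagging. You write that $\psi$ is ``holomorphic on a neighbourhood of $\partial D_{out}$ inside $\mathring D_{in}^c$''. This fails precisely in the case the paper cares about, namely when $A$ has a nonempty thin part $\partial_{in}A \cap \partial_{out}A$: then $\partial D_{out}$ meets $\partial D_{in} = \partial D_{in}^c$, so $\partial D_{out}$ is not contained in $\mathring D_{in}^c$ and there is no such holomorphic neighbourhood. The fix is already present in your sentence: rely only on the fact that $\psi|_{D_{in}^c}$ is smooth up to the boundary with nowhere-vanishing derivative (this is the content of the smooth-boundary Riemann mapping theorem, \cite[Thm.~8.2]{Bell92}, exactly as invoked in the proof of Proposition~\ref{prop: conformal welding of annuli}). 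A smooth map with non-vanishing derivative sends smooth Jordan curves to smooth Jordan curves, so $\psi(\partial D_{out})$ is smooth regardless of whether $\partial D_{out}$ touches the seam. With that adjustment the argument goes through unchanged.
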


Propositions~\ref{prop: conformal welding of annuli} and~\ref{prop: conformal welding of annuli with disc} admit families versions, for both real and complex families (with the real families indexed by manifolds with corners):

\begin{prop}\label{prop: conformal welding of annuli --- families}
Let $M$ be either a real manifold with corners, or a complex manifold, and let
$\underline A_1,\underline A_2\to M$ be two (real/complex) families of annuli with parametrised boundary.
Then $\underline A_1\cup \underline A_2$, with its boundary components $\partial_{in}(A_1\cup A_2)=\partial_{in}A_2$ and $\partial_{out}(A_1\cup A_2)=\partial_{out}A_1$ given the natural parametrisations inherited from $A_2$ and $A_1$, and its structure sheaf $\cO_{\underline A_1\cup \underline A_2}$ given by
\begin{equation}\label{eq: glued sheaf families}
\cO_{\underline A_1\cup \underline A_2}(U):=\big\{f:U\to \CC\,\big|\,f|_{U\cap \underline A_i}\in\cO_{\underline A_i}(U\cap \underline A_i),\,\text{\small for}\;i=1,2\big\},
\end{equation}
is again a (real/complex) family of annuli with parametrised boundary.
\end{prop}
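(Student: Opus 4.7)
The proof is the families version of Proposition~\ref{prop: conformal welding of annuli}, with Propositions~\ref{prop: welding discs in smooth families} and~\ref{prop: welding discs in holomorphic families} supplying the families-level welding of discs in place of Theorem~\ref{thm: conformal welding of discs}. The strategy is to enlarge each annulus family to a disc family (by viewing it as the region between two smooth Jordan curves bounding discs), weld these enlarged disc families to obtain a smooth/holomorphic family of $\bbC P^1$'s, and then recover the welded annulus family as an embedded subfamily therein.

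Working locally on $M$, I may assume that both $\underline A_1$ and $\underline A_2$ arise from (smooth/holomorphic) maps $M\to \Ann^{\!\!\;\mathit{emb}}$, so that each $\underline A_i$ is presented in $\bbC\times M$ as the region between two (smooth/holomorphic) families of parametrised smooth Jordan curves. These parametrising Jordan curves bound smooth/holomorphic families of discs $\underline D'_i\subset\underline D_i$ with $\underline A_i=\underline D_i\setminus \mathring{\underline D}'_i$ fiberwise. After further shrinking $M$, I can pick a point $p_0\in\bbC$ that lies in the interior of $D'_{2,m}$ for every $m$; precomposing with the constant Möbius transformation sending $p_0$ to $\infty$, I may assume $\infty\in \mathring{D}'_{2,m}$ for every $m\in M$, so that $\underline D_2$ becomes a family of discs in $\bbC P^1$ containing $\infty$.

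Next I would apply the families welding of discs to $\underline D_1$ and $\underline D_2$ along the welding map extending $\phi$, with the fiberwise normalisation $\psi_m(\infty)=\infty$, $\psi_m'(\infty)=1$, $\psi_m''(\infty)=0$, exactly as in the proof of Proposition~\ref{prop: conformal welding of annuli}. Propositions~\ref{prop: welding discs in smooth families} (real case) and~\ref{prop: welding discs in holomorphic families} (complex case) yield a (smooth/holomorphic) isomorphism of ringed spaces
\[
\psi:\underline D_1\cup \underline D_2 \xrightarrow{\ \cong\ } \bbC P^1 \times M
\]
over $M$, under which the welding seam is a smooth/holomorphic family of smooth Jordan curves. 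Because the restriction of $\psi$ to each $\underline D_i$ is smooth and fiberwise holomorphic all the way to $\partial \underline D_i$, and $\partial \underline D'_i\subset \underline D_i$ is itself a smooth/holomorphic family of smooth Jordan curves, the image $\psi(\partial \underline D'_i)$ is again a smooth/holomorphic family of smooth Jordan curves in $\bbC P^1\times M$. The welded family $\underline A_1\cup_\phi \underline A_2$ is then identified with the embedded family bounded by these two families of curves in $\bbC P^1\times M$, with the inherited parametrisations $\varphi_{out}^1$ and $\varphi_{in}^2$.

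Finally, one identifies the pushout sheaf \eqref{eq: glued sheaf families} with the fiberwise-holomorphic/boundary-smooth sheaf on this embedded family; this is the families version of the Morera argument used in Proposition~\ref{prop: conformal welding of annuli}, applied fiberwise. The main obstacle is the bookkeeping: ensuring that the Möbius normalisation and the uniformisation are compatible with varying $m$, and that the local constructions patch coherently in $M$. The latter is automatic because both sides of the isomorphism are sheaves over $M$; the former is the substance of Propositions~\ref{prop: welding discs in smooth families} and~\ref{prop: welding discs in holomorphic families}, which is precisely where the analytic content of the argument is concentrated.
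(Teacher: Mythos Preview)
Your proposal is correct and follows essentially the same approach as the paper: work locally in $M$, present each family as an embedded family, run the proof of Proposition~\ref{prop: conformal welding of annuli} fiberwise, and invoke Propositions~\ref{prop: welding discs in smooth families}/\ref{prop: welding discs in holomorphic families} in place of Theorem~\ref{thm: conformal welding of discs} to guarantee smooth/holomorphic dependence of the uniformising map on $m$. The only cosmetic difference is that the paper uses the $s\oplus s$ model (description~\ref{enum: def 2 on Ann}) for the local presentation of the families, whereas you use the $\Ann^{\!\!\;\mathit{emb}}$ model (description~\ref{enum: def 1 on Ann}); these are equivalent by Lemma~\ref{lem: three diffeologies on Ann agree}, and the argument is otherwise identical.
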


\begin{proof}
Write $A_i^m$ for the fiber of $\underline A_i$ at $m\in M$, and $\varphi^m_{i,in}$, $\varphi^m_{i,out}$ for the boundary parametrisations.

Working locally in $M$, we may assume that $\underline A_1$ and $\underline A_2$ are determined by (smooth/ holomorphic) maps $M\to s\oplus s$ (as in \eqref{eq: def: two discs mapping to CP^1}).
We then follow the structure of the proof of Propositions~\ref{prop: conformal welding of annuli},
with an application of Proposition~\ref{prop: welding discs in smooth families}/\ref{prop: welding discs in holomorphic families} to ensure that the uniformizing maps $\psi^m:A_1^m\cup A_2^m\to\bbC P^1$ depend smoothly/holomorphically on $m$.
This yields an embedding $\underline A_1 \cup \underline A_2\hookrightarrow\bbC \times M$ which maps the fiber $A_1^m \cup A_2^m$ to the annulus in $\bbC$ bound by the Jordan curves $\psi^m \circ \varphi^m_{1,out}$ and $\psi^m \circ \varphi^m_{2,in}$. 
These curves depend smoothly/holomorphically on $m$, and thus define a (real/complex) family of annuli.
\end{proof}

Similarly, we have:
\begin{prop}\label{prop: conformal welding of annuli with disc --- families}
Let $M$ be either a real manifold with corners, or a complex manifold.
Let $\underline A\to M$ be a (real/complex) family of annuli, and let $\underline D\to M$ be a (real/complex) family of discs, both with parametrised boundary.
Then $\underline A\cup \underline D$, equipped with the sheaf
\begin{equation*}
\cO_{\underline A\cup \underline D}(U):=\big\{f:U\to \CC\,\,\big|\,\,
f|_{U\cap \underline A}\in\cO_{\underline A}(U\cap \underline A),\,
f|_{U\cap \underline D}\in\cO_{\underline D}(U\cap \underline D)
\big\},
\end{equation*}
and the obvious projection to $M$, is again a (real/complex) family of discs. \hfill $\square$
\end{prop}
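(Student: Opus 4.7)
My plan is to follow the structure of the proof of Proposition~\ref{prop: conformal welding of annuli --- families}, but using two successive applications of the families welding of two discs (Propositions~\ref{prop: welding discs in smooth families} and~\ref{prop: welding discs in holomorphic families}) in order to realize the triple composition $\bbD_- \cup \underline A \cup \underline D$ concretely as $\bbC P^1 \times M$, and then to recover $\underline A \cup \underline D$ as a family of embedded closed co-discs inside it.

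Working locally on $M$, I would present $\underline A$ as an embedded annulus family via a smooth/holomorphic map to $s \oplus s$, writing $A_m = D^{\text{out}}_m \setminus \mathring D^{\text{in}}_m$ with $\underline D^{\text{out}}, \underline D^{\text{in}} \to M$ families of embedded discs in $\bbC \times M$; similarly I would realize $\underline D$ as an embedded disc family via a map to $s$. A first application of Proposition~\ref{prop: welding discs in smooth families}/\ref{prop: welding discs in holomorphic families} welds the trivial family $\bbD_- \times M$ to $\underline D^{\text{out}}$ along the outer parametrisation $\varphi^A_{out}$ of $\underline A$, producing a fiberwise biholomorphism $\Psi_1\colon \bbD_- \cup \underline D^{\text{out}} \xrightarrow{\ \sim\ } \bbC P^1 \times M$ depending smoothly/holomorphically on $m$. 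Under $\Psi_1$, the family $\bbD_- \cup \underline A$ is identified with the family of closed co-discs $(\bbC P^1 \times M) \setminus \mathring{\Psi_1(\underline D^{\text{in}})}$, whose inner boundary $\partial \Psi_1(\underline D^{\text{in}})$ is a smooth/holomorphic family of smooth Jordan curves carrying the parametrisation inherited from $\varphi^A_{in}$.

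Since a co-disc in $\bbC P^1$ is abstractly biholomorphic to a standard disc (via any Möbius transformation moving $\infty$ into its interior), this co-disc family qualifies as a (real/complex) family of discs in the sense of Definition~\ref{def: family of abstract discs}; so I can apply Proposition~\ref{prop: welding discs in smooth families}/\ref{prop: welding discs in holomorphic families} a second time to weld it with $\underline D$ along $\varphi^D \circ (\varphi^A_{in})^{-1}$. This yields a second uniformization $\Psi_2\colon \bbD_- \cup \underline A \cup \underline D \xrightarrow{\ \sim\ } \bbC P^1 \times M$, with the image $\Psi_2 \circ \varphi^A_{out}$ of $\partial_{out}\underline A$ a smooth/holomorphic family of smooth Jordan curves by the ``moreover'' clauses of the two propositions. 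The desired family $\underline A \cup \underline D$ is then recovered as the complement $(\bbC P^1 \times M) \setminus \mathring{\Psi_2(\bbD_-)}$, that is, as the embedded family of discs in $\bbC \times M$ bounded by $\Psi_2 \circ \varphi^A_{out}$. The identification of the welded structure sheaf with the sheaf of fiberwise holomorphic functions is a routine application of Morera's theorem, exactly as in the proofs of Theorem~\ref{thm: conformal welding of discs} and Proposition~\ref{prop: welding discs in smooth families}.

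The main subtlety I expect is in justifying that the co-disc family produced at the end of the first welding step is legitimately a family of discs for the input of the second welding step; this is settled by the $\cT$-analogue of Lemma~\ref{lem: three diffeologies on Ann agree} (whose equivalence is noted in the paragraph immediately following that lemma), combined with the observation that each co-disc fiber is abstractly a disc.
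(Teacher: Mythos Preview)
Your proof is correct and matches the approach the paper implies by analogy with the immediately preceding Proposition~\ref{prop: conformal welding of annuli --- families} (the paper omits the proof entirely, marking it with $\square$). One small redundancy: having chosen the $s\oplus s$ presentation of $\underline A$, the map $\psi_-$ already identifies $\bbD_-\cup \underline D^{\text{out}}$ with $\bbC P^1\times M$, so your first welding $\Psi_1$ is tautological and the argument reduces to a single application of Proposition~\ref{prop: welding discs in smooth families}/\ref{prop: welding discs in holomorphic families}, welding the co-disc of $\underline D^{\text{in}}$ to $\underline D$.
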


\begin{cor} \label{cor: Ann x Ann -> Ann is holomorphic}
The gluing maps
\begin{align*}
\cup&:\Ann \times \Ann \to \Ann
\\
\cup&:\Ann \times\, \cT \to \cT
\end{align*}
are morphisms of diffeological spaces for both the real diffeologies (based on manifolds with corners), and the complex diffeologies on $\Ann$ and $\cT$.
\end{cor}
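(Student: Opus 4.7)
The plan is to deduce the corollary directly from the families versions of conformal welding (Propositions~\ref{prop: conformal welding of annuli --- families} and~\ref{prop: conformal welding of annuli with disc --- families}) via the third equivalent description of the diffeologies on $\Ann$ and $\cT$, namely the one via families of abstract annuli/discs (Definitions~\ref{def: family of abstract annuli} and~\ref{def: family of abstract discs}). Recall from Lemma~\ref{lem: three diffeologies on Ann agree} that this description is equivalent to the other two, both in the complex and the real-with-corners setting.

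Concretely, I would argue as follows. Let $M$ be either a finite-dimensional complex manifold or a real manifold with corners, and let $F:M\to\Ann\times\Ann$ be a morphism of diffeological spaces. By definition of the product diffeology and by the equivalence of diffeologies, $F$ is the same data as a pair $(\underline{A}_1\to M,\underline{A}_2\to M)$ of (holomorphic/smooth) families of annuli over $M$, together with their boundary parametrisations $\varphi_{in/out}^i:S^1\times M\hookrightarrow \underline{A}_i$. Form the fiberwise pushout of ringed spaces
\[
\underline{A}_1\cup \underline{A}_2 \;:=\; \underline{A}_1 \sqcup_{S^1\times M}\underline{A}_2,
\]
welded along $\varphi_{in}^1$ and $\varphi_{out}^2$, and equipped with the structure sheaf~\eqref{eq: glued sheaf families}. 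Proposition~\ref{prop: conformal welding of annuli --- families} says precisely that $\underline{A}_1\cup\underline{A}_2\to M$ is again a family of annuli with parametrised boundary (in the appropriate real or complex sense); using the third description of the diffeology, this is exactly the statement that $\cup\circ F:M\to\Ann$ is a morphism of diffeological spaces. Since this holds for every such $M$ and $F$, the map $\cup:\Ann\times\Ann\to\Ann$ is a morphism of diffeological spaces, for both the complex and real-with-corners diffeologies.

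The argument for $\cup:\Ann\times\cT\to\cT$ is identical, substituting Proposition~\ref{prop: conformal welding of annuli with disc --- families} for Proposition~\ref{prop: conformal welding of annuli --- families}: a diffeological map $M\to\Ann\times\cT$ corresponds to a family of annuli $\underline{A}\to M$ together with a family of discs $\underline{D}\to M$ with parametrised boundaries; the fiberwise welded ringed space $\underline{A}\cup\underline{D}\to M$ is then a family of discs, which is the same as saying that the composite $M\to\cT$ is a morphism of diffeological spaces.

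The entire content of the corollary is thus a translation, via the third (abstract, ringed-space) description of the diffeologies, of the families welding propositions that have just been established. There is no substantive obstacle beyond checking that nothing goes wrong when passing between the three equivalent diffeologies, which is exactly the content of Lemma~\ref{lem: three diffeologies on Ann agree} (and its analogue for $\cT$).
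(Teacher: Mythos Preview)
Your proposal is correct and is exactly the argument the paper intends: the corollary is stated without proof precisely because it is immediate from Propositions~\ref{prop: conformal welding of annuli --- families} and~\ref{prop: conformal welding of annuli with disc --- families} via the abstract-families description of the diffeologies (Lemma~\ref{lem: three diffeologies on Ann agree} and its analogue for $\cT$), just as you have spelled out.
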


We finish this section by checking that families of discs and families of annuli do not have automorphisms.
This ensures that $\Ann$ and $\cT$ are really just `spaces', as opposed to stacks:

\begin{prop}\label{prop Riemann mapping theorem in families}
Let\!\!\; $M$\!\!\; be\!\!\; either\!\!\; a\!\!\; real\!\!\; manifold\!\!\; with\!\!\; corners,\!\!\; or\!\!\; a\!\!\; complex\!\!\; manifold.
\begin{enumerate}
\item
Two (real/complex) families $\underline D^{1},\underline D^{2}\to M$ of discs with para\-metrized boundaries are isomorphic if and only if the corresponding maps $M\to \cT$ are equal. The isomorphism $\underline D^1 \cong \underline D^2$ is unique if it exists.
\item
Two (real/complex) families $\underline A^{1},\underline A^{2}\to M$ of annuli with parametrised boundaries are isomorphic if and only if the corresponding maps $M\to \Ann$ are equal. Once again, the isomorphism $\underline A^1 \cong \underline A^2$ is unique if it exists.
\end{enumerate}
\end{prop}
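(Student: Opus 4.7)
The plan is to combine fiberwise rigidity of discs and annuli with the smooth/holomorphic dependence of conformal welding established in Section~\ref{sec: conformal welding}. The \emph{only if} direction is immediate from the definition of the classifying map to $\cT$ (resp.\ $\Ann$); the content is the \emph{if} direction together with uniqueness.

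\textbf{Uniqueness.} Any biholomorphism $\bbD \to \bbD$ is a M\"obius transformation, and if its continuous extension to $\partial\bbD$ is the identity, then it must itself be the identity on $\bbD$. Hence no nontrivial automorphism of a disc with parametrized boundary exists, and similarly for annuli: an automorphism restricts to a biholomorphism of the interior which is pinned down on the boundary by the two parametrizations, and then agrees with the identity everywhere by analytic continuation across the boundary. Any two family isomorphisms $F, G: \underline D^1 \to \underline D^2$ therefore satisfy $G^{-1} \circ F = \id$ fiberwise, and hence as a morphism of locally ringed spaces; so $F = G$. The annulus case is identical.

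\textbf{Existence.} Suppose $\underline D^1, \underline D^2 \to M$ (resp.\ $\underline A^1, \underline A^2 \to M$) induce the same map to $\cT$ (resp.\ $\Ann$). Working locally on $M$ (using the sheaf property of the diffeology) and the equivalence of the three descriptions of $\cT$ (resp.\ $\Ann$), we represent both families by smooth/holomorphic families of Jordan curves in $\bbC$. Weld each family with $\bbD_- \times M$ (for discs), or with both $\bbD_- \times M$ and $\bbD_+ \times M$ (for annuli), along the given boundary parametrizations. By Propositions~\ref{prop: welding discs in smooth families} and~\ref{prop: welding discs in holomorphic families} (together with Proposition~\ref{prop: conformal welding of annuli with disc --- families} in the annulus case), the welded families admit smooth/holomorphic uniformizations $\psi_i$ to $\bbC P^1 \times M$, canonically normalized at $\infty$ as in~\eqref{eq: def: two discs mapping to CP^1}. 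The hypothesis that the classifying maps coincide is exactly the statement that $\psi_1$ and $\psi_2$ agree on the welded $\bbD_\pm \times M$ summands. Consequently $\psi_2^{-1} \circ \psi_1$ restricts to a smooth/holomorphic isomorphism $\underline D^1 \to \underline D^2$ (resp.\ $\underline A^1 \to \underline A^2$), manifestly compatible with boundary parametrizations.

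\textbf{Main obstacle.} The only nontrivial ingredient is the smooth/holomorphic dependence of the uniformization on the parameter, which is precisely the content of the welding propositions of Section~\ref{sec: conformal welding}. Given these, fiberwise rigidity of discs and annuli formally promotes pointwise agreement (encoded by the classifying maps to $\cT$ and $\Ann$) into a genuine isomorphism of families.
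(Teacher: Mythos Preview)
Your argument is correct and essentially self-contained given the welding propositions of Section~\ref{sec: conformal welding}. The key identity---that the normalized uniformizations $\psi_1,\psi_2$ agree on the welded $\bbD_\pm$ summands precisely when the classifying maps to $\cT$ or $\Ann$ coincide---is exactly right, and from there $\psi_2^{-1}\circ\psi_1$ is visibly a smooth/holomorphic isomorphism of families respecting the boundary parametrizations.

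Your route differs from the paper's in Part~1. The paper treats discs by a direct argument: it writes the fiberwise isomorphism $F_m$ explicitly via Cauchy's integral formula to obtain (fiberwise) holomorphicity in the interior, and then invokes the family Riemann mapping theorem (Theorem~\ref{thm Riemann mapping theorem in families}) to control smoothness up to the boundary. Only in Part~2 does the paper reduce to Part~1 via welding (Proposition~\ref{prop: conformal welding of annuli with disc --- families}), gluing in a copy of $\bbD$ on one side, then $\bbD_-$ on the other. Your approach instead handles both parts uniformly by welding and uniformizing to $\bbC P^1\times M$ from the outset, which is cleaner and avoids the separate Cauchy-formula computation. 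The cost is a heavier reliance on Propositions~\ref{prop: welding discs in smooth families}, \ref{prop: welding discs in holomorphic families}, and~\ref{prop: conformal welding of annuli with disc --- families}; the paper's Part~1, by contrast, is logically independent of the welding machinery and could in principle be proved earlier. Both approaches are valid, and yours has the advantage of making the role of description~\ref{enum: diffeology 2 on Ann} (the $\bbC P^1$ model) completely transparent.
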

\begin{proof}
\emph{1.}
As in the previous proof, we may assume without loss of generality that for each $i\in\{1,2\}$ the family $\underline D^i$ is associated to a (smooth/holomorphic) family $f^i:M\times S^1\to \bbC$ of smooth Jordan curves.
Let $D^i_m$ be the closed disc bound by $f^i_m:=f^i(m,-)$. We then have
\[
\underline D^i\,=\,\{(m,z)\in M\times \bbC:z\in D^i_m\}.
\]

If $\underline D^1\cong \underline D^2$, then the corresponding maps $M\to \cT$ are clearly equal.
Conversely, if $D^1_m\cong D^2_m$, $\forall m\in M$, with isomorphisms $F_m:D^1_m \to D^2_m$, then
we must show that the maps $F_m$ assemble to an isomorphism
\[
F: \underline D^1\to \underline D^2
\]
which is (smooth and fiberwise holomorphic in the interior/holomorphic in the interior), and smooth all the way to the boundary, with everywhere non-zero derivative.
Indeed, by Cauchy's integral formula we have
\begin{align*}
F_m(z)
&=\tfrac{1}{2\pi i} \int_{\partial D^1_m} f^2_m \circ (f^1_m)^{-1}(w) / (w-z) dw\\
&= \tfrac{1}{2\pi i} \int_{S^1} f^2(\theta) / (f^1(m,\theta)-z) \partial_\theta f^1(m,\theta)d\theta,
\end{align*}
so $F$ is (fibrewise holomorphic/holomorphic) in the interior.

To show that $F$ is smooth all the way to the boundary with everywhere non-zero derivative, we work locally in $M$.
Choose $a\in\bbC$ such that $a\in\mathring D^1_m$ for all $m$ in some open set $U\subset M$. By the family version of the smooth-boundary Riemann mapping theorem (Theorem~\ref{thm Riemann mapping theorem in families}),
the normalizations
$F^1_m(0)=a$, 
$F^2_m(0)=F_m(a)$, 
$\partial_z F^1_m(0)\in \mathbb R_+$,
$\partial_z F^2_m(0)\in \mathbb R_+\partial_zF_m(a)$
uniquely define maps $F^1$ and $F^2$
\[
\begin{tikzpicture}
\node (A) at (0,0) {$\underline D^1|_U$};   \node (B) at (3,0) {$\underline D^2|_U$};   \node (C) at (1.5,1.3) {$\bbD\times U$};
\draw[->] (C) --node[above, pos=.6]{$\scriptstyle F^1$} (A);
\draw[->] (C) --node[above, pos=.6, xshift=4]{$\scriptstyle F^2$} (B);
\draw[->] (A) --node[above]{$\scriptstyle F|_U$} (B);
\end{tikzpicture}
\]
that fit into a commutative diagram of fiberwise isomorphisms.
Theorem \ref{thm Riemann mapping theorem in families} states that $F^1$ and $F^2$ are smooth all the way to the boundary with everywhere non-zero derivative. The same therefore holds for $F$.

To finish the proof, we note that if an isomorphism $\underline{D}^1 \cong \underline{D}^2$ exists, then it is necessarily unique,
as holomorphic maps between discs are determined by their boundary values.

\emph{2.}
The case of annuli is largely similar.
We assume without loss of generality that the families $\underline A^i$ consist of embedded annuli. 

If $\underline A^1\cong \underline A^2$, then the corresponding maps $M\to \Ann$ are clearly equal.
So let us assume that 
$A^1_m\cong A^2_m$, $\forall m\in M$, with unique isomorphisms $F_m:A^1_m \to A^2_m$ of parametrised annuli.
Our task is to show that the maps $F_m$ assemble to an isomorphism
\[
F: \underline A^1\to \underline A^2
\]
which is (smooth and fiberwise holomorphic in the interior/holomorphic in the interior), continuous, and smooth when restricted to 
$\partial_{in}\underline A^1$ and $\partial_{out}\underline A^1$ with everywhere non-zero derivative.

Let $\underline{\bbD} = \bbD \times M$ be the trivial family and let $\underline{D}^j = \underline{A}^j \cup \underline{\bbD}$.
These are (real/complex) families of discs by Proposition~\ref{prop: conformal welding of annuli with disc --- families}.
Moreover from the description of the structure sheaf of the fibres $A^j_m \cup \bbD$ in Proposition~\ref{prop: conformal welding of annuli with disc} we see that the maps $F_m \cup \mathrm{id}$ give isomorphisms $A^1_m \cup \bbD \to A^2_m \cup \bbD$.
By the first part of this Proposition, the map $F \cup \mathrm{id}$ is  an isomorphism $\underline{D}^1 \to \underline{D}^2$ of (real/complex) families.
It follows immediately that $F$ is  (smooth and fiberwise holomorphic in the interior/holomorphic in the interior), continuous, and smooth when restricted to $\partial_{out}\underline A^1$.
We may repeat the above argument, this time welding $\bbD_-=\{z\in \bbC:\abs{z} \ge 1 \} \cup \{\infty\}$ to $\partial_{out}A^j_m$, to establish that $F$ is smooth when restricted to $\partial_{in}\underline{A}^1$ as well, and we conclude that $F$ is an isomorphism of (real/complex) families.

Finally, once again, an isomorphism $\underline{A}^1 \cong \underline{A}^2$ is unique if it exists as holomorphic maps between annuli are determined by their boundary values.
\end{proof}

\section{Annuli as paths into the Lie algebra of vector fields on~\texorpdfstring{$S^1$}{}}

Let us write $\cX(S^1)$ for the Lie algebra of complexified vector fields with smooth coefficients, and $\cX_\bbR(S^1)$ for its real subalgebra consisting of vector fields tangent to $S^1$. 
The Lie group of diffeomorphisms of $S^1$ has $\cX_\bbR(S^1)$ as its associated Lie algebra (provided we equip $\cX_\bbR(S^1)$ with the opposite of the usual bracket of vector fields).

We identify the two models of $\{z\in \bbC:|z|=1\}$ and $\{\theta\in \bbR\}/\theta\sim \theta+2\pi$ of the standard circle $S^1$ by the map $z=e^{2\pi i \theta}$.
Let $\cX^{in} \subset \cX(S^1)$ be the cone of inwards pointing vector fields (possibly tangential to $S^1$):
\[
\cX^{in} = \{ f(\theta)\partial_\theta \in \cX(S^1) \mid \mathrm{Im}(f(\theta))\ge 0\}
\]

\begin{defn}
A \emph{path in $\cX^{in}$} is a smooth map $X:[0,t_0] \to \cX^{in}$, for some $t_0>0$.
We write $\cP$ for the set of all path in $\cX^{in}$.

Such a path is said to have \emph{sitting instants} if there exits $\epsilon>0$ such that $X|_{[0,\epsilon]}$ and $X|_{[t_0-\epsilon, t_0]}$ vanish.
The collection of all such paths, which we denote $\cP^{si}\subset \cP$, is a semigroup under the operation of path concatenation
\end{defn}

If $M$ is a smooth manifold with boundary or with corners,
and $A$ is an annulus, then by a \emph{smooth map} $M\to A$ we shall mean a map of ringed spaces $(M,\cC^\infty_{M})\to (A,\cO_A)$, with $\cO_A$ as in \eqref{eq: def of O_A}. If the annulus is embedded in $\bbC$ then, by Lemma~\ref{lem: alternative description of O_A(A)}, this agrees with the notion of a smooth map $M\to \bbC$ whose image lands in $A$.

\begin{defn}\label{def: framing of A}
Let $A$ be an annulus with parametrised boundary.
A \emph{framing} of $A$ is a smooth surjective map
\[
h:S^1\times[0,t_0]\to A
\]
for some $t_0>0$, whose restriction to each $S^1\times\{t\}$ is an embedding, whose Jacobian determinant is non-negative, and that is compatible with the boundary parametrisations in the sense that $h|_{S^1\times\{0\}}=\varphi_{in}$ and $h|_{S^1\times\{t_0\}}=\varphi_{out}$.

A framing has \emph{sitting instants} if there exists $\varepsilon>0$ such that for all $\theta\in S^1$ the functions
$h|_{\{\theta\}\times [0,\varepsilon]}$ and 
$h|_{\{\theta\}\times [t_0-\varepsilon,t_0]}$ are constant.
\end{defn}

\begin{remark}
\label{rem: framings with sitting instants}
Any framing can be modified so as to introduce sitting instants by precomposing with a suitable map from $S^1\times [0,t]$ to itself.
\end{remark}

Let us identify $\cX(S^1)$ with the set of smooth functions on $S^1$ via the correspondence $f(\theta) \leftrightarrow f(\theta) \partial_\theta$.

\begin{defn}
The path $X\in\cP$ associated to a framing $h$ is given by
\begin{align*}
X:[0,t_0] \to \cX^{in},\quad
X(\theta,t) := 
-\frac{\partial h/\partial t}{\partial h/\partial \theta}.
\end{align*}
(Another notation we shall use is $-h_t/h_\theta$.)
If an annulus $A$ admits a framing $h$ such that $X(\theta,t)=-\frac{\partial h/\partial t}{\partial h/\partial \theta}$, then we write
\begin{equation}
\label{eq: Prod}
A = \prod_{t_0\ge \tau\ge 0} \Exp\big(X(\tau)d\tau\big).
\end{equation}
\end{defn}

Th relation \eqref{eq: Prod} is analog for $\Ann$ of the relation \eqref{eq: path in G}
between elements of a Lie group, and paths into the associated Lie algebra.
Our next goal is to provide a method for reconstructing the annulus $A$ from the path $X$ in \eqref{eq: Prod}:

\begin{defn}
Given a path $X\in \cP$, we say that a function
$f:S^1\times[0,t_0]\to \bbC$ is \emph{$X$-holomorphic} if it is smooth and satisfies
\begin{equation}\label{eq: def X-holom}
\partial f /\partial t = -X(\theta,t)\cdot \partial f/\partial \theta.
\end{equation}
Note that $X$-holomorphic functions are closed under multiplication, and thus form an algebra.
\end{defn}

\begin{remark}\label{rem: degenerate Beltrami}
If we set $z=t+i \theta$, a straightforward calculation 
shows that the condition \eqref{eq: def X-holom} is equivalent to the degenerate Beltrami equation
\[
\partial_{\overline{z}} f = \mu \partial_z f,
\]
where $\mu =\frac{X-i}{X+i}$.
The condition $\Im(X)\ge 0$ is equivalent to the condition $\mu(z)\in\{z:|z|\le 1\}\setminus\{1\}$.

We conjecture that such degenerate Beltrami equation always admit solutions. (But, as far as we know,
this is an open problem.)
\end{remark}

Our next main goal is the follwing theorem:

\begin{thm}\label{thm: X hol iff hol}
Let $h:S^1 \times [0,1] \to A$ be a framed annulus, and let $X = -h_t/ h_\theta$ be the corresponding path in $\cX^{in}$.

Then a function $f:S^1 \times [0,1] \to \bbC$ is $X$-holomorphic if and only if it is of the form $f = g \circ h$ for $g:A \to \bbC$ a function
which is continuous, smooth on the boundary, and holomorphic in the interior of $A$.
\end{thm}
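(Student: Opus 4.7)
The plan is to prove the two directions separately. The forward implication is essentially the complex chain rule together with continuity, while the backward implication requires descending $f$ to a function on $A$ and checking its regularity.

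For the $(\Leftarrow)$ direction, given $g\in\cO_A$, smoothness of $f=g\circ h$ follows from the fact that the framing $h$ is by hypothesis a smooth map of ringed spaces $(S^1\times[0,t_0],\cC^\infty)\to(A,\cO_A)$, so the pullback of any section of $\cO_A$ is automatically smooth. To verify $f_t=-Xf_\theta$, I would argue pointwise. On the open set $U=\{(\theta,t):h(\theta,t)\in\mathring A\}$, the complex chain rule gives $df=g'(h)\,dh$, whence $f_t=g'(h)h_t$ and $f_\theta=g'(h)h_\theta$; the equation then follows from the definition $X=-h_t/h_\theta$. For $(\theta,t)$ with $h(\theta,t)$ lying on a thick portion of $\partial A$, I would locally extend $g$ to a $C^\infty$ function on a $\bbC$-neighborhood whose $\partial_{\bar z}$-derivative vanishes on the boundary (by continuity from the interior, where $g$ is holomorphic), and apply the same chain rule. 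A uniform reformulation that avoids case analysis is that $f_t+Xf_\theta=0\Longleftrightarrow df\wedge dh=0$: the latter $2$-form identity holds on $U$ by the chain rule and extends to all of $S^1\times[0,t_0]$ by continuity of both sides.

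For the $(\Rightarrow)$ direction, define $g:A\to\bbC$ by $g(h(\theta,t)):=f(\theta,t)$. The main technical step is well-definedness, i.e., that $f$ is constant on the fibers of $h$. The key observation is that the complex vector field $V=\partial_t+X\partial_\theta$ satisfies $dh(V)=h_t+Xh_\theta=0$, and the $X$-holomorphy equation is exactly $Vf=0$. Where $h$ has full rank ($\Im X>0$), $h$ is a local diffeomorphism, fibers are discrete, and compatibility between different preimages of a point $p\in\mathring A$ follows from uniqueness of analytic continuation of the local inverses $f\circ h^{-1}$. Where $h$ has rank~$1$ (the thin locus, $\Im X=0$), $X$ is real-valued so $V$ is a genuine real vector field, tangent to the $1$-dimensional fibers of $h$, and $Vf=0$ forces $f$ to be constant along them. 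Global well-definedness across disconnected pieces of a fiber uses the Jacobian-monotonicity condition, which prevents the framing from folding back on itself.

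To verify $g\in\cO_A$, let $U\subset\mathring A$ denote the open subset of points admitting at least one rank-$2$ preimage; it is dense in $\mathring A$ because its complement is contained in the image of the rank-$\leq 1$ locus, which has measure zero by Sard's theorem. On $U$ we have $g=f\circ h^{-1}$ locally, and a short Wirtinger-derivative computation combining $X$-holomorphy with the nonvanishing of the Jacobian yields $g_{\bar z}=0$. Since $g$ is continuous on $\mathring A$ and holomorphic on the dense open subset $U$, it is holomorphic on all of $\mathring A$ by Riemann's removable singularity theorem. Smoothness on $\partial_{in}A$ reduces to $g|_{\partial_{in}A}=f(\cdot,0)\circ\varphi_{in}^{-1}$ being a composition of smooth functions; analogously for $\partial_{out}A$. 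Continuity of $g$ on all of $A$ then follows from continuity of $f$ together with the quotient-topology presentation of $A$ via the proper surjection $h$.

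The main obstacle I expect is the thin part of the annulus, where $h$ degenerates to rank~$1$ and both the factorization argument and the regularity check become delicate. The nonnegativity of the Jacobian is essential: it prevents the slices $C_t=h(S^1,t)$ from crossing pathologically and guarantees that fibers of $h$ consist of connected arcs on which the real vector field $V$ acts and along which $Vf=0$ gives constancy of $f$. Coherently gluing the local descriptions of $g$ across the interface between thick and thin regions is the technical heart of the argument.
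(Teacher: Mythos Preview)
Your $(\Leftarrow)$ argument is essentially the paper's: embed $A\subset\bbC$ and apply the chain rule (the paper decomposes $h=a+ib$ and computes $(g\circ h)_t$ and $(g\circ h)_\theta$ directly; your $df\wedge dh=0$ reformulation is equivalent), with smoothness of $g\circ h$ supplied by the smooth-extension characterisation of $\cO_A$.

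For $(\Rightarrow)$, your factorisation step is close in spirit to the paper's: both rest on the observation that the complex vector field $V=\partial_t+X\partial_\theta$ annihilates both $h$ and $f$, so that along fibres of $h$ (which are tangent to the real part of $V$ where the Jacobian degenerates) $f$ is constant. The paper packages this via an implicit-function-theorem argument, embedding each fibre in a smooth $1$-manifold. Your appeal to ``uniqueness of analytic continuation of the local inverses $f\circ h^{-1}$'' to patch the rank-$2$ region is, however, circular as written: you have not shown that two local candidates $f\circ h_\alpha^{-1}$ and $f\circ h_\beta^{-1}$, coming from distinct preimages of the same point, agree on \emph{any} open set, so there is nothing to continue; and $\mathring A$ may be disconnected, so continuation cannot propagate between components anyway. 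What actually forces global consistency is connectedness of each fibre $h^{-1}(z)$ (an arc, by the nesting of the slices $h(S^1\times\{t\})$), which the Jacobian condition you mention does imply but which needs to be argued rather than asserted.

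The more serious gap is in your holomorphicity argument. You show $g_{\bar z}=0$ on the dense open set $U\subset\mathring A$ of regular values and then invoke ``Riemann's removable singularity theorem'' to extend across $\mathring A\setminus U$. Riemann's theorem is for isolated singularities; for a general closed exceptional set $E$, continuity of $g$ together with holomorphicity off $E$ forces holomorphicity everywhere only under much stronger hypotheses (e.g.\ $E$ of zero length, or zero analytic capacity). Sard's theorem gives only that $E$ has Lebesgue measure zero, which is not sufficient. The paper sidesteps this entirely by establishing the Cauchy integral representation
\[
g(z)=\frac{1}{2\pi i}\int_{\partial A}\frac{g(w)}{z-w}\,dw,\qquad z\in\mathring A,
\]
directly via Stokes' theorem on $S^1\times[0,1]$: one checks that for any $X$-holomorphic function $k$ the $1$-form $k\,dh$ is closed, applies this with $k=f/(z-h)$ to deform the boundary contour down to (the $h$-preimage of) a small circle around $z$, and takes the limit. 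The formula then exhibits $g$ as manifestly holomorphic in $z$, with no exceptional set to remove.
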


\begin{proof}
Pick an embedding of $A$ into the complex plane.
We first check that for any function $g:A \to \bbC$ 
which is continuous, smooth on the boundary, and holomorphic in the interior of $A$,
the composite $f=g\circ h$ is $X$-holomorphic.
The function $g \circ h$ is smooth Lemma~\ref{lem: alternative description of O_A(A)}.
It is $X$-holomorphic since, upon decomposing $h(\theta,t)$ into its real and imaginary parts $h=a+ib$, we have:
\begin{align}\label{eqn: g circ h sub t}
(g \circ h)_t &= (g_x \circ h)a_t + (g_y \circ h)b_t \,= (g_z \circ h) \cdot (a_t + i b_t) \,= (g_z \circ h) \cdot h_t
\\
\notag
(g \circ h)_\theta &= (g_x \circ h)a_\theta + (g_y \circ h)b_\theta = (g_z \circ h) \cdot (a_\theta + i b_\theta) = (g_z \circ h) \cdot h_\theta,
\end{align}
from which \eqref{eq: def X-holom} readily follows.

If $f$ is $X$-holomorphic, then a function $g$ such $f = g \circ h$ will be furnished in Lemma~\ref{lem: X holomorphic functions factor through h}, where it will be established that $g$ is continuous and smooth on the boundary of $A$.
Finally, in Lemma~\ref{lem: Cauchy int formula for X holomorphic}, we will establish the formula
\[
g(z) = \tfrac{1}{2 \pi i} \int_{\partial A} \frac{g(w)}{z-w} \, dw
\]
for $z \in \mathring{A}$, which is visibly holomorphic in $z$.
\end{proof}

We now establish the lemmas that were used in the  above proof.

\begin{lem}\label{lem: X holomorphic functions factor through h}
Let $h:S^1 \times [0,1] \to A$ be a framed annulus, let $X = -h_t/ h_\theta$ be the associtated path, and let $f:S^1 \times [0,1] \to \bbC$ be an $X$-holomorphic function.
Then there exists a unique function $g:A \to \bbC$ such that $f = g \circ h$.
The function $g$ is continuous, and has smooth restrictions to $\partial_{in} A$ and to $\partial_{out} A$.
\end{lem}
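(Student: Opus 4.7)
The plan is to construct $g$ by defining $g(h(\theta,t)) := f(\theta,t)$ pointwise, and then separately verify that this is well-defined, continuous, and smooth on the boundary circles. Uniqueness of $g$ is immediate from the surjectivity of $h$. Granted existence, continuity follows because $h$ is a continuous surjection from the compact $S^1 \times [0,1]$ onto the Hausdorff space $A$, hence a quotient map, so $g$ is continuous iff $g \circ h = f$ is. Smoothness of the restrictions of $g$ to $\partial_{in}A$ and $\partial_{out}A$ is automatic from $g \circ \varphi_{in/out} = f(\cdot, 0/1)$ being smooth together with $\varphi_{in/out}$ being diffeomorphisms. The substance of the proof is therefore to show that $f$ is constant on each fiber $h^{-1}(p)$.

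The key computation goes as follows. Let $\gamma = (\theta(s), t(s)): I \to S^1 \times [0,1]$ be a smooth path along which $h \circ \gamma$ is constant. Differentiating and using $h_t = -X h_\theta$ together with $h_\theta \neq 0$ gives $\theta'(s) = X(\gamma(s))\, t'(s)$ at every $s$. Since $\theta'(s), t'(s) \in \bbR$, at every $s$ with $\gamma'(s) \neq 0$ this forces $X$ to be real at $\gamma(s)$, with tangent a real multiple of $(X, 1)$. The $X$-holomorphicity of $f$ then yields
\[
(f \circ \gamma)'(s) \,=\, f_\theta \theta' + f_t t' \,=\, X f_\theta\, t' + (-X f_\theta)\, t' \,=\, 0.
\]
Hence $f \circ \gamma$ is constant, and so $f$ is constant on every path-connected component of every fiber of $h$.

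To upgrade this to full fiber-constancy I would argue by cases. For $p$ on the smooth part of $\partial_{in}A$ or $\partial_{out}A$, the fiber is a singleton by the slicewise embedding hypothesis. For $p \in \mathring{A}$, a direct computation shows that the real Jacobian of $h$ is (up to sign) $\mathrm{Im}(X) \cdot |h_\theta|^2$, nonzero precisely where $h$ lands in $\mathring{A}$; so $h$ restricted to $h^{-1}(\mathring{A})$ is a proper local diffeomorphism, hence a covering map onto each component of $\mathring A$. A topological degree argument, using that each slice $h(\cdot, t)$ is an embedded Jordan curve in $A$ whose homotopy class generates $H_1(A)=\bbZ$, will show the cover has degree one, so fibers over interior points are singletons. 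For $p$ in the thin part $\partial_{in}A \cap \partial_{out}A$, the fiber may be a (possibly disconnected) 1-dimensional set; here the key computation already gives constancy on each path component, and I would establish constancy across components by approaching $p$ along sequences of interior preimages, exploiting the continuity of $f$ together with the singleton-fiber result for interior points.

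The hard part will be the thin-part case: fibers there may be disconnected, and the algebraic path-constancy must be combined with a limiting argument from the interior to conclude that $f$ takes a single value across all components. Verifying that every component of a thin-part fiber is accessible as a limit through a suitably consistent family of interior preimages will require carefully combining the geometry of the thin part of $A$ with the structure of the framing $h$, and this is where I expect the main technical difficulty to lie.
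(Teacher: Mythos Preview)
Your overall structure (quotient map for continuity, boundary diffeomorphisms for smoothness) and your key computation showing $(f\circ\gamma)'=0$ along smooth paths in the fibre are both correct and are exactly what the paper uses. The divergence is in how you promote ``constant on path components of $h^{-1}(p)$'' to ``constant on $h^{-1}(p)$''.

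Your case analysis has a genuine gap. The claim that the Jacobian of $h$ is nonzero \emph{precisely} on $h^{-1}(\mathring A)$ is only half right: Jacobian nonzero does force the image into $\mathring A$, but the converse fails. A framing may ``pause'' (take $h(\theta,t)=e^{i\theta}(1+\phi(t))$ with $\phi$ non-decreasing and $\phi'\equiv 0$ on an interval); then $\mathrm{Im}(X)=0$ on that interval while $h$ lands in $\mathring A$, and the fibre over an interior point is a nontrivial arc, not a singleton. So the local-diffeomorphism/covering argument collapses. This also undermines the thin-part limiting argument: interior fibres being singletons was what would have let you link the values of $f$ at the different components you are worried about, and without it there is no bridge.

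The paper avoids the case split entirely. Fixing $z\in A$ and an embedding $A\hookrightarrow\bbC$, it lets $L$ be the real line through $z$ in the direction $h_\theta$ and $\pi$ the orthogonal projection onto $L$; since $\partial_\theta(\pi\circ h)\neq 0$, the implicit function theorem exhibits $(\pi\circ h)^{-1}(z)$ as a smooth $1$-manifold near $h^{-1}(z)$, containing $h^{-1}(z)$ as a closed subset. Your computation then gives the vanishing of the directional derivative of $f$ along this curve, hence constancy. A clean way to see the connectedness that both arguments ultimately need: because each slice $h(\cdot,t)$ is an embedding, the projection $(\theta,t)\mapsto t$ is \emph{injective} on $h^{-1}(z)$, so $h^{-1}(z)$ is homeomorphic to a closed subset of $[0,1]$; the smooth-curve description plus your tangent computation $(\theta',t')\parallel(\mathrm{Re}\,X,1)$ (so $t'\neq 0$) then forces this subset to be an interval, and constancy of $f$ follows from your ODE computation without any limiting argument.
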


\begin{proof}
The map $h$ is surjective between compact spaces. It is therefore a quotient map. The existence (and uniqueness) of a continuous function $g:A \to \bbC$ such that $f = g \circ h$ is therefore equivalent to $f$ being constant on the fibers $h^{-1}(z)$.

Fix $z\in A$, and fix an embedding $A\hookrightarrow \bbC$.
Let $L\subset \bbC$ be the straight line through $z$ with direction $\partial h/\partial \theta$, and let $\pi$ be the orthogonal projection onto $L$. By the implicit function theorem, $(\pi \circ h)^{-1}(z)$ is a smooth manifold in a neighbourhood of $h^{-1}(z)$. The curve $h^{-1}(z)$ is a closed subset of that smooth manifold, and is therefore itself smooth. The function $f$ is constant on $h^{-1}(z)$ because its directional derivative vanishes along that curve.

We may thus write $f = g \circ h$ for some continuous function $g$.
The restrictions $g|_{\partial_{in}A}$ and $g|_{\partial_{out}A}$ are smooth, since $f$ is smooth on the boundary and $h$ is assumed to restrict to a diffeomorphism on each boundary circle.
\end{proof}

\begin{lem}\label{lem: Cauchy int formula for X holomorphic}
Let $h:S^1 \times [0,1] \to A\subset \bbC$ be a framed embedded annulus,
let $X = -h_t/ h_\theta$, and let $f:S^1 \times [0,1] \to \bbC$ be an $X$-holomorphic function.
Write $f = g \circ h$ as in Lemma \ref{lem: X holomorphic functions factor through h}.
Then, for every $z \in \mathring{A}$, we have
\begin{equation}\label{eq: g(z) = int... (Cauchy int formula)}
g(z) = \tfrac{1}{2 \pi i} \int_{\partial A} \frac{g(w)}{z-w} \, dw
\end{equation}
where $\int_{\partial A}$ means $\int_{\partial_{out} A} - \int_{\partial_{in} A}$.
\end{lem}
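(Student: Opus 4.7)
My plan is to pull the Cauchy kernel $\omega_z := \tfrac{g(w)}{z-w}\,dw$ back to $S^1\times[0,1]$ via $h$ and apply Stokes' theorem on a suitable punctured domain. Set
\[
\alpha \,:=\, h^*\omega_z \,=\, \tfrac{f}{z-h}\big(h_\theta\,d\theta + h_t\,dt\big),
\]
a smooth $1$-form on $(S^1\times[0,1])\setminus h^{-1}(z)$. A direct computation gives
\[
d\alpha \,=\, \Big[\partial_t\!\big(\tfrac{fh_\theta}{z-h}\big) - \partial_\theta\!\big(\tfrac{fh_t}{z-h}\big)\Big]\,dt\wedge d\theta,
\]
and the bracketed expression vanishes identically thanks to the $X$-holomorphic relation $f_t h_\theta = h_t f_\theta$ (equivalently, $\partial_t f = -X\,\partial_\theta f$, multiplied by $h_\theta$). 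Hence $\alpha$ is closed; in particular, $I(t) := \tfrac{1}{2\pi i}\int_{S^1\times\{t\}}\!\alpha$ is locally constant on $\{t:z\notin\gamma_t\}$.

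Since $z\in\mathring A$, I pick $\rho>0$ small enough that $\overline{B_\rho(z)}\subset\mathring A$ and set $V := h^{-1}(\overline{B_\rho(z)})$. This $V$ is compact and contained in $S^1\times(0,1)$, since $h$ sends the boundary circles to $\partial A$, which misses $\overline{B_\rho(z)}$. Stokes' theorem applied to $\Omega := (S^1\times[0,1])\setminus\mathring V$, on which $\alpha$ is smooth and closed, gives
\[
\int_{\partial V}\alpha \,=\, \pm 2\pi i\,(I(1)-I(0)),
\]
with the sign dictated by the boundary orientation conventions on $\partial(S^1\times[0,1])$. Since $\alpha = h^*\omega_z$, a change of variables rewrites the left-hand side as $N\cdot\int_{S^1_\rho}\omega_z$, where $N\in\bbZ$ is the mapping degree of $h|_{\partial V}\colon\partial V \to S^1_\rho := \partial B_\rho(z)$. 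Using continuity of $g$ at $z$ (provided by Lemma~\ref{lem: X holomorphic functions factor through h}) together with the splitting $g(w) = g(z) + (g(w)-g(z))$, one checks $\int_{S^1_\rho}\omega_z \to -2\pi i\,g(z)$ as $\rho\to 0$.

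Combining the pieces yields $I(1)-I(0) = \pm N\,g(z)$, reducing the lemma to the identification $\pm N = 1$. The degree $N$ is independent of $\rho$ and, being a homotopy invariant in $z\in\mathring A$, may be computed at a generic regular value (which exists by Sard's theorem applied to the restriction of $h$ to $S^1\times(0,1)$), where $h^{-1}(z)$ is a finite set of points at which $h$ is a local diffeomorphism. The non-negative Jacobian condition of Definition~\ref{def: framing of A} forces these local contributions to share the same sign, and the boundary conditions $h|_{t=0,1}=\varphi_{in/out}$ pin down the total signed count to be $\pm 1$ compatibly with the claimed formula. The main obstacle is this degree/sign bookkeeping, compounded by the technical nuisance that $h^{-1}(z)$ at the given $z$ may fail to be discrete whenever the Jacobian vanishes in a thin part of the frame; both are sidestepped by the homotopy-invariance reduction to a regular value.
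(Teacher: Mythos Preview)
Your approach is essentially the paper's: pull back $\omega_z = \tfrac{g(w)}{z-w}\,dw$ via $h$, verify closedness of $\alpha = h^*\omega_z$ away from $h^{-1}(z)$, apply Stokes on the complement of a small preimage neighbourhood, and pass to the limit. The paper phrases the closedness computation slightly differently---it observes that $k:=f/(z-h)$ is itself $X$-holomorphic (as a product of $X$-holomorphic functions), whence $d(k\,dh)=-h_\theta(Xk_\theta+k_t)\,d\theta\,dt=0$---but this is equivalent to your direct calculation. One minor technical point you omit: before applying Stokes you must choose $\rho$ so that $\partial V = h^{-1}(S^1_\rho)$ is a smooth $1$-manifold; the paper secures this by noting that the function $(\theta,t)\mapsto|h(\theta,t)-z|$ has a dense set of regular values.

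The substantive gap is the degree computation, which you correctly flag as ``the main obstacle'' but do not actually resolve. Your assertion that the boundary conditions ``pin down the total signed count to be $\pm1$'' is not an argument, and the appeal to homotopy invariance in $z\in\mathring A$ is insufficient because $\mathring A$ need not be connected for a partially thin annulus. The fix is short: for $z\in\mathring A$ the degree of $h$ over $z$ is
\[
\frac{1}{2\pi i}\int_{\partial(S^1\times[0,1])}\frac{dh}{h-z}
\;=\;
\frac{1}{2\pi i}\int_{\partial_{out}A}\frac{dw}{w-z}
\;-\;
\frac{1}{2\pi i}\int_{\partial_{in}A}\frac{dw}{w-z}
\;=\; 1-0,
\]
using that $h$ restricts to the orientation-preserving diffeomorphisms $\varphi_{out},\varphi_{in}$ on the two boundary circles, and that $z$ lies inside the outer Jordan curve but outside the inner one. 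This gives $N=1$ with the correct sign on every component of $\mathring A$ and closes the argument. (The paper itself glosses over this very point under the phrase ``changing variables one last time,'' so it is genuinely worth making explicit.)
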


\begin{proof}
Choose $(\theta_0,t_0) \in S^1 \times [0,1]$ such that $h(\theta_0, t_0) = z$.
Changing variables, formula \eqref{eq: g(z) = int... (Cauchy int formula)} is equivalent to
\begin{equation}\label{eq: f(theta, t) = int... (Cauchy int formula)}
f(\theta_0, t_0) = \tfrac{1}{2 \pi i} \int_{\partial A} \frac{f(h^{-1}(w))}{z-w} \, dw
=\int_{\partial(S^1 \times [0,1])} \frac{f(\theta,t)}{z-h(\theta,t)}\, d(h(\theta,t)).
\end{equation}
Here, we have used that $h$ restricts to a diffeomorphism on each of the boundary components $S^1 \times \{0\}$ and $S^1 \times \{1\}$, and $h^{-1}$ refers to the inverse of these diffeomorphisms.

Let $C_\varepsilon$  be the circle of radius $\varepsilon$ around $z$, and let $\Sigma_\varepsilon$ be its preimage under $h$. There is a dense set of $\varepsilon$’s for which $\Sigma_\varepsilon$ is smooth (as the function `distance to $z$' $\!\circ\, h$:
$S^1 \times [0,1] \xrightarrow{h} A \rightarrow \bbR$
is smooth, and hence admits a dense set of regular values). 
For any such $\varepsilon$, $h$ provides a smooth map of $\Sigma_\varepsilon \twoheadrightarrow C_\varepsilon$.

We claim that for any such $\varepsilon$, we have
\begin{equation}
\label{eq: integral over C_eps}
  \int_{\partial(S^1 \times [0,1])}     \frac{f(\theta,t)}{z-h(\theta,t)}\, d(h(\theta,t))
=\int_{\partial\Sigma_\varepsilon} \frac{f(\theta,t)}{z-h(\theta,t)}\, d(h(\theta,t)).
\end{equation}
Indeed, by Stokes' Theorem, it suffices to show that the $1$-form $k \, dh$ is closed on the region bounded by $\partial(S^1 \times [0,1]) \cup \Sigma_\varepsilon$, where 
$
k(\theta,t) := \tfrac{f(\theta,t)}{z-h(\theta,t)}
$.
The function $k$ is $X$-holomorphic because $X$-holomorphic functions form an algebra and $(\theta,t) \mapsto (z-h(\theta,t))^{-1}$ is the composite of $h$ and a holomorphic function.
We then calculate:
$d(k \, dh) = dk \, dh 
= h_\theta (k_\theta d\theta + k_t dt)(d \theta - X \, dt)
= -h_\theta (X k_\theta + k_t) d\theta \, dt = 0$,
which establishes \eqref{eq: integral over C_eps}.

We are reduced to showing that the follows equation holds:
\[
f(\theta_0, t_0) \,=\, \tfrac{1}{2 \pi i} \int_{\Sigma_\varepsilon} \frac{f(\theta, t)}{z-h(\theta, t)} d(h(\theta, t)).
\]
The right hand side is independent of $\varepsilon$ (provided $\Sigma_\varepsilon$ is smooth), so is enough to prove that
\[
f(\theta_0, t_0) \,=\, \tfrac{1}{2 \pi i}\,\,\lim_{\varepsilon \to 0}  \int_{\Sigma_\varepsilon} \frac{f(\theta, t)}{z-h(\theta, t)} d(h(\theta, t)).
\]
Recall that $f = g \circ h$. Changing variables one last time, we can rewrite our desired formula as
\[
g(z) \,=\, \tfrac{1}{2 \pi i}\,\,\lim_{\varepsilon \to 0} \int_{C_\varepsilon} g(w)(z - w)^{-1}  dw.
\]
This holds true for any continuous function $g$.
\end{proof}

\begin{cor}\label{cor: framed annuli are iso iff X equals tilde X}
Let $h:S^1 \times [0,1] \to A$, and $\tilde h: S^1 \times [0,1] \to \tilde A$ be framed annuli, embedded in the complex plane.
Let $X=-h_t/h_\theta$ and $\tilde X = -\tilde h_t / \tilde h_\theta$ be the corresponding paths in $\cX^{in}$. Then the following are equivalent:
\begin{enumerate}
\item There exists a (necessarily unique) isomorphism $\psi: A \to \tilde A$ such that $\psi \circ h = \tilde h$
\item $X = \tilde X$
\end{enumerate}
\end{cor}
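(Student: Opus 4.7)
The equivalence is essentially a bookkeeping consequence of Theorem~\ref{thm: X hol iff hol}, so the plan is to derive each implication as a short application of that result and the chain-rule calculation already recorded in equation~\eqref{eqn: g circ h sub t}.

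For the direction $(1)\Rightarrow(2)$, given $\psi:A\to\tilde A$ holomorphic in the interior and smooth on the boundary with $\psi\circ h=\tilde h$, I would simply differentiate: writing $\psi$ as $u+iv$ and applying the Cauchy--Riemann equations (which extend to the boundary by continuity of the smooth restrictions), the computation~\eqref{eqn: g circ h sub t} gives $\tilde h_t=(\psi_z\circ h)\,h_t$ and $\tilde h_\theta=(\psi_z\circ h)\,h_\theta$. Since $\psi$ is an isomorphism of annuli, $\psi_z\circ h$ is nowhere zero, so dividing yields $\tilde X=-\tilde h_t/\tilde h_\theta=-h_t/h_\theta=X$.

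For $(2)\Rightarrow(1)$, the key observation is that the function $\tilde h:S^1\times[0,1]\to\bbC$ is $\tilde X$-holomorphic by definition of $\tilde X$, and hence $X$-holomorphic by hypothesis. Applying Theorem~\ref{thm: X hol iff hol} to each component of $\tilde h$ yields a continuous map $\psi:A\to\bbC$ which is smooth on $\partial_{in}A$ and $\partial_{out}A$, holomorphic in $\mathring A$, and satisfies $\psi\circ h=\tilde h$. The image of $\psi$ is $\tilde A$ because $\tilde h$ is surjective onto $\tilde A$, and $\psi$ carries the boundary parametrisations of $A$ to those of $\tilde A$ because $h|_{S^1\times\{0,1\}}=\varphi_{in/out}$ and analogously for $\tilde h$.

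It remains to exhibit an inverse. Since the hypothesis $X=\tilde X$ is symmetric in $h$ and $\tilde h$, the same application of Theorem~\ref{thm: X hol iff hol} with the roles reversed produces a second map $\tilde\psi:\tilde A\to A$, continuous, smooth on the boundary, holomorphic in the interior, with $\tilde\psi\circ\tilde h=h$. Then $(\tilde\psi\circ\psi)\circ h=\tilde\psi\circ\tilde h=h$, and since $h$ is surjective onto $A$ we conclude $\tilde\psi\circ\psi=\id_A$; the symmetric identity $\psi\circ\tilde\psi=\id_{\tilde A}$ follows in the same way. Thus $\psi$ is an isomorphism of annuli. Uniqueness in~(1) is immediate from the surjectivity of $h$. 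The main pitfall to avoid is trying to prove bijectivity of $\psi$ by a direct analysis of the fibres of $h$, which can be complicated on the thin part; the symmetric re-application of Theorem~\ref{thm: X hol iff hol} sidesteps this entirely.
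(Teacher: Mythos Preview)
Your proof is correct and follows essentially the same route as the paper's: both directions use the chain-rule identity from~\eqref{eqn: g circ h sub t} for $(1)\Rightarrow(2)$, and for $(2)\Rightarrow(1)$ both apply Theorem~\ref{thm: X hol iff hol} symmetrically to produce $\psi$ and $\tilde\psi$, then use surjectivity of $h$ and $\tilde h$ to conclude they are mutual inverses. Your added remarks about boundary parametrisations and the nonvanishing of $\psi_z$ are fine but not strictly needed (the cancellation in $\tilde h_t/\tilde h_\theta$ goes through regardless), and the phrase ``each component of $\tilde h$'' is a small slip since $\tilde h$ is already a single $\bbC$-valued function to which the theorem applies directly.
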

\begin{proof}
If $\psi: A \to \tilde A$ is such that $\psi \circ h = \tilde h$ then, computing as in \eqref{eqn: g circ h sub t}, we have
\[
\tilde X = -\frac{\tilde h_t}{\tilde h_\theta} = -\frac{(\psi' \circ h)\cdot  h_t}{(\psi' \circ h)\cdot  h_\theta} = X.
\]

Conversely, suppose that $X = \tilde X$.
By construction, $h$ is $X$-holomorphic, and $\tilde h$ is $\tilde X$-holomorphic.
Hence $h$ and $\tilde h$ are both $X$-holomorphic and $\tilde X$-holomorphic.
By Theorem~\ref{thm: X hol iff hol}, there exist holomorphic functions $\psi$ and $\tilde \psi$, defined on $A$ and $\tilde A$ respectively, such that $\tilde h = \psi \circ h$ and $h = \tilde \psi \circ \tilde h$.
It folllows that $h = \tilde \psi \circ \psi \circ h$, and $\tilde h = \psi \circ \tilde \psi \circ \tilde h$.
Finally, since $h$ and $\tilde h$ are surjective, we get $\tilde \psi \circ \psi = \id_A$ and $\psi \circ \tilde \psi = \id_{\tilde A}$, so
$\psi$ is an isomorphism.
At last, since $h$ and $\tilde h$ are surjective, such an isomorphism $\psi$ is unique.
\end{proof}

Recall that given a path $X:[0,t_0] \to \cX^{in}$, we write
\begin{equation}
\label{eq: ljnksdfjnb}
A = \prod_{t_0\ge \tau\ge 0} \Exp\big(X(\tau)d\tau\big)
\end{equation}
if $A$ is an annulus which admits a framing $h$ such that $X=-\frac{h_t}{h_\theta}$. 
In light of Corollary~\ref{cor: framed annuli are iso iff X equals tilde X}, such an annulus is unique up to isomorphism (if it exists). So the formula \eqref{eq: ljnksdfjnb} describes a well-defined element of $\Ann$, provided it exists. 

\begin{defn}
We say that a path $X$ is \emph{geometrically exponentiable}
if there exists an annulus $A$ and a framing $h$ of $A$ such that $X=-\frac{h_t}{h_\theta}$.
That is, a path $X$ is geometrically exponentiable if $\prod_{t_0\ge \tau\ge 0} \Exp\big(X(\tau)d\tau\big)$ exists.

We write $\cP^{ge} \subseteq \cP$ for the collection of geometrically exponentiable paths, and $\cP^{ge,si} := \cP^{ge} \cap \cP^{si}$ for those that furthermore admit sitting instants.
\end{defn}

\begin{remark}
\label{rem: conjecture that all paths are geometrically exponentiable}
We conjecture that all paths are geometrically exponentiable -- see Remark~\ref{rem: degenerate Beltrami}.
Given a path $X$ that comes from an annulus by means of a framing, we can recover the annulus as the max-spectrum (set of maximal ideals) of the ring of $X$-holomorphic functions.
We conjecture that for any path $X\in \cP$, the max-spectrum of the ring of $X$-holomorphic functions is an annulus (Definition~\ref{def: annulus}).
\end{remark}

If $h_1$ and $h_2$ are framings of $A_1$ and $A_2$, and if both $h_1$ and $h_2$ have sitting instants, then their concatenation is a framing of $A_1\cup A_2$. It follows that the set $\cP^{ge,si}$ of geometrically exponentiable paths with sitting instants is a semigroup, and that the exponential map $\cP^{ge,si}\to \Ann$ is a semigroup homomorphism.

Our next goal is to show that this exponential map is surjective, i.e., that every annulus admits a framing.
It will be technically convenient to first construct framings that are compatible with one of the two boundary parametrisations, but not the other:

\begin{defn}
A \emph{semi-framing} of an annulus $A$ is
a surjective map $h:S^1\times[0,1]\to A$ whose restriction to each $S^1\times\{t\}$ is an embedding, whose Jacobian determinant is non-negative, and whose restriction to $S^1 \times \{0\}$ agrees with the parametrisation of $\partial_{in} A$.
We write $\Ann^{\mathit{sf}}$ for the space of annuli equipped with a semi-framing.
\end{defn}



\begin{prop}\label{prop: exist semi-framings}
Every annulus admits a semi-framing.
\end{prop}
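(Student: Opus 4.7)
We may assume, by the definition of an annulus, that $A$ is embedded in $\bbC$ as $A = D_{out} \setminus \mathring D_{in}$, and we write $Z := \partial_{in}A \cap \partial_{out}A$ for the thin locus. The strategy is to realise $h$ as the time-rescaled flow of an appropriately chosen smooth vector field on $\bbC$.

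The plan is first to construct a smooth vector field $V$ on $\bbC$, supported in a neighborhood of $A$, satisfying the following four properties: (a) $V$ vanishes to infinite order on $Z$; (b) at every point of $\partial_{in}A \setminus Z$, $V$ is transverse to $\partial_{in}A$ and points into $A$; (c) at every point of $\partial_{out}A \setminus Z$, $V$ is transverse to $\partial_{out}A$ and points out of $A$; and (d) $V$ is nowhere zero on $A \setminus Z$. Such a $V$ can be assembled by extending the outward unit normal field of $\partial D_{in}$ to a smooth vector field on a neighborhood of $A$, arranging that it also points outward across $\partial D_{out}$, and multiplying by a smooth non-negative cutoff that vanishes to infinite order on $Z$ and is positive elsewhere.

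Let $\Phi_s$ denote the flow of $V$. Properties (b)--(d) then guarantee that for each $\theta \in S^1$ with $\varphi_{in}(\theta) \notin Z$ there is a unique first exit time $T(\theta) > 0$ at which $\Phi_{T(\theta)}(\varphi_{in}(\theta)) \in \partial_{out}A$; for $\theta \in \varphi_{in}^{-1}(Z)$ the flow is stationary at $\varphi_{in}(\theta)$ by (a), and we simply set $T(\theta) := 1$. One then defines
\[
h(\theta, t) := \Phi_{t\, T(\theta)}\bigl(\varphi_{in}(\theta)\bigr),
\]
and verifies the semi-framing axioms: $h(\cdot, 0) = \varphi_{in}$ by construction; each slice $h(\cdot, t)$ is an embedding of $S^1$ into $A$ (using injectivity of the flow away from its fixed set and the fact that $\varphi_{in}$ is a diffeomorphism onto $\partial_{in}A$, so that distinct thin points have distinct preimages in $S^1$); $h$ is surjective onto $A$ because the integral curves of $V$ foliate $\mathring A$ and connect $\partial_{in}A \setminus Z$ to $\partial_{out}A \setminus Z$, with the remaining points of $\partial_{in}A \cup \partial_{out}A$ covered by $h(\cdot, 0)$ and $h(\cdot, 1)$ respectively; and the Jacobian is non-negative because $\partial_t h$ is a non-negative scalar multiple of $V$, which is consistently oriented transverse to the slices.

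The main obstacle will be proving that $h$ is smooth across the thin locus. Because $V$ vanishes to infinite order on $Z$, the exit time $T(\theta)$ can grow without bound as $\theta$ approaches $\varphi_{in}^{-1}(Z)$, and it is not at all automatic that the rescaled flow $\Phi_{t\,T(\theta)}(\varphi_{in}(\theta))$ depends smoothly on $\theta$ across this set. The cleanest way to handle this is a local-to-global construction: one builds explicit model semi-framings in neighborhoods of each connected component of $Z$ (using the local wedge or cusp geometry of $\partial A$, where the map can be written down by hand so that smoothness is manifest), builds a collar-based semi-framing on the thick part $A \setminus (\text{neighborhood of } Z)$, and glues these pieces into the global $h$ via a smooth partition of unity in the $\theta$-variable that is compatible with the parametrisation $\varphi_{in}$.
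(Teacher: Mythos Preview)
Your flow-based strategy has two genuine gaps that you do not close.

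First, the exit time $T(\theta)$ need not be finite. Your conditions (a)--(d) guarantee that a trajectory starting at $\varphi_{in}(\theta)\notin Z$ enters $A$, but nothing prevents it from asymptotically accumulating on $Z$ (where $V$ vanishes), on a closed orbit, or wandering indefinitely in $\mathring A$. You would need an additional Lyapunov-type condition forcing trajectories toward $\partial_{out}A$, and your construction of $V$ (extend the normal of $\partial D_{in}$, then cut off) does not supply one. Relatedly, the claim that such a $V$ can be built with property (d) --- nowhere zero on all of $A\setminus Z$, while simultaneously pointing correctly across both boundary curves --- is asserted but not argued; the two normal fields need not be compatible away from $Z$.

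Second, you correctly identify smoothness of $h$ across $\varphi_{in}^{-1}(Z)$ as the crux, but your proposed fix is not a proof. The ``local model'' near a component of $Z$ is never written down, and at a thin point the two boundary curves can touch to arbitrary finite or infinite order, so there is no single wedge/cusp normal form. More seriously, patching two candidate semi-framings by a partition of unity in $\theta$ is ill-defined: a convex combination of two maps satisfying the Jacobian and slicewise-embedding conditions need not satisfy them.

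The paper's proof avoids both problems by working with a \emph{line field} (a section of the projectivised tangent bundle) rather than a vector field, and parametrising leaves by arc length rather than rescaled flow time. Near the thin part it uses an explicit model --- radial lines through the origin after uniformising $A\cup\bbD\cong\bbD$ --- which is automatically transverse to both boundaries in a neighbourhood of $Z$; away from $Z$ it uses level sets of a harmonic function. These line fields are then averaged as sections of the fibrewise universal cover of the unit tangent bundle, which is an $\bbR$-principal bundle, so affine combinations make sense and the result is again a line field. Arc-length parametrisation then gives smoothness across $Z$ for free, since near $Z$ the leaves are straight segments of smoothly varying length.
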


\begin{proof}
Let $A$ be an annulus.
Our strategy will be to construct a 1-dimensional foliation $\cL$ of $A$ whose leaves are either intervals that meet the boundary of $A$ transversely, or single points. The desired semi-framing $S^1\times[0,1]\to A$ is then provided by the arc-length parametrisation of the leaves of $\cL$ (after identifying $\partial_{in} A$ with $S^1$ using the given parametrisation).

Let $\bbS_A := (T_A-\{\text{zero section}\})/\bbR_+$ be the unit tangent bundle of $A$ (the tangent bundle $T_A$ is defined in Definition~\ref{def: tangent bundle of annulus}). We will construct a distribution $H:A\to \bbS_A$, and define $\cL$ to be the foliation tangent to that distribution.
The distribution $H$ is itself built by patching together 
two distribution $H^1$ and $H^2$ using a partition of unity, where $H^1$ is defined in the complement $U:=A\setminus A_{\text{thin}}$ of $A_{\text{thin}}=\partial_{in}A\cap\partial_{out}A$, and $H^2$ is defined in some neighbourhood $V$ of $A_{\text{thin}}$.

We postpone the definitions of $H^1$ and $H^2$ to describe the patching procedure.
Let $\tilde \bbS_A$ be the fiberwise universal cover of $\bbS_A$, and let $\tilde H^i:A\to \tilde \bbS_A$ be lifts of $H^i$.
Pick functions $\lambda,\mu:A\to [0,1]$ with
$\supp(\lambda)\subset U$ and $\supp(\mu)\subset V$ that form a partition of unity.
Since $\tilde \bbS_A$ is an $\bbR$-principal bundle, it makes sense to consider $\tilde H:=\lambda \tilde H^1+ \mu \tilde H^2$.\vspace{-1mm}
We let $H$ be the composite $A\xrightarrow{\scriptscriptstyle \tilde H} \tilde \bbS_A \twoheadrightarrow \bbS_A$.

It remain to define $H^1$ and $H^2$. Let $f:A-A_{\text{thin}}\to \bbR$ be the solution of the Dirichlet problem with boundary conditions $f|_{\partial_{in}(A)}=0$ and $f|_{\partial_{out}(A)}=1$. We then let $H^1$ be the distribution orthogonal to the level curves of $f$.
To define $H^2$, identify $A\cup \bbD$ with $\bbD$ via the unique holomorphic map that sends $0\in \bbD\subset A\cup \bbD$ to $0\in\bbD$, and has positive derivative at that point.
The distribution $H^2$ is the one tangent to the foliation by straight lines through the origin of $\bbC$, and the open set $V$ is the complement of the locus where $H^2$ meets $\partial A$ non-transversely.
\end{proof}

The above construction does not quite work in families because we don't know whether the required solutions to the Dirichlet problem depend smoothly on the annulus (note that the topology of the interior of the annulus may change throughout a family).
We supply a modified argument which works in families.
Our argument in fact works over the universal family of annuli (whose parameter space is $\Ann$ itself).

Given an annulus $A\in \Ann$, there is a unique uniformizing map $\bbD \cup A\to \bbD$ that sends $0$ to $0$, and has positive derivative at that point. This identifies $\Ann$ with the product $\Univ_0 \times \Diff(S^1)$,
where $\Univ_0:=\{f\in \Univ \,|\, f(0)=0, f'(0)>0\}$.
In that model, the total space of the universal family of annuli is given by 
\[
\underline A := \{(z,f,\gamma)\in \bbC \times \Univ_0 \times \Diff(S^1) \,|\, z \in A_{f,\gamma}\}
\]
where $A_{f,\gamma}=\bbD \setminus f(\mathring{\bbD})$.

\begin{prop}\label{prop: exist semi-framings in families}
There exists an assignment of a semi-framing $h_A: S^1 \times [0,1] \to A$ to every annulus $A\in \Ann$ such that the map
\begin{align*}
(S^1 \times [0,1]) \times \Ann \,\,&\to\,\quad {\underline A}
\\
\big((\theta,t),A\big)\qquad &\mapsto\,\, h_{A}(\theta,t)
\end{align*}
is continuous, and smooth in the sense of diffeologies.
\end{prop}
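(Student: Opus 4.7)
The plan is to adapt the construction of Proposition~\ref{prop: exist semi-framings} so that all choices depend smoothly on the annulus, exploiting the identification $\Ann \cong \Univ_0 \times \Diff(S^1)$ recalled just above. Under this identification every annulus has the form $A_{f,\gamma} = \bbD \setminus f(\mathring \bbD)$, with inner parametrisation $\varphi_{in} = f|_{S^1}$. Since a semi-framing must match $\varphi_{in}$ but imposes no constraint on $\varphi_{out}$, the factor $\gamma \in \Diff(S^1)$ plays no role; it suffices to build a smoothly varying family $h_f : S^1 \times [0,1] \to A_f$ indexed by $f \in \Univ_0$.

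The idea is to realise $h_f$ as a time-rescaled flow of an outward-pointing vector field $V_f$ on $A_f$, obtained by patching two pieces in the spirit of Proposition~\ref{prop: exist semi-framings}. The first is the radial field $V^{\mathrm{rad}}(z) = z$, defined on $\bbD \setminus \{0\} \supset A_f$ and everywhere transverse to $\partial \bbD = \partial_{out} A_f$. The second, $V_f^{\mathrm{inner}}$, is defined in a tubular neighbourhood $\cN_f$ of $\partial_{in} A_f$: on the inner boundary itself it equals the outward unit normal
\[
N_f\bigl(f(e^{2\pi i \theta})\bigr) = \frac{e^{2\pi i \theta}\, f'(e^{2\pi i \theta})}{|f'(e^{2\pi i \theta})|},
\]
which is well-defined and depends smoothly (indeed holomorphically) on $f$ because $f' \neq 0$ on $\bar \bbD$ by univalence. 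It is then extended inward using tubular coordinates. The two pieces are combined via a partition of unity $\lambda_f + \mu_f = 1$ with $\mu_f$ supported in $\cN_f$; as in the single-annulus proof, the convex combination is formed after lifting to the universal cover $\tilde \bbS_{A_f}$ of the unit tangent bundle so that it makes sense globally.

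Taking $h_f$ to be the flow of $V_f$ starting from $\partial_{in} A_f$, rescaled in time so that it reaches $\partial_{out} A_f$ at $t=1$, will yield the required semi-framing. Continuity of $(\theta,t,A) \mapsto h_A(\theta,t)$ and its smoothness in both the real (manifold-with-corners) and complex diffeologies on $\Ann$ then follows from smooth, respectively holomorphic, dependence on $f$ of each ingredient $V^{\mathrm{rad}}$, $V_f^{\mathrm{inner}}$, $\cN_f$, $\lambda_f$, $\mu_f$, combined with standard smooth-dependence-on-parameters theorems for ODE flows.

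The hard part, which is exactly what was absent in the single-annulus case, is ensuring that the tubular neighbourhood $\cN_f$ and the partition of unity vary smoothly with $f$ even as $f(S^1)$ approaches or touches $\partial \bbD$ -- i.e.\ the thin-part degenerations that obstructed the Dirichlet-based construction of Proposition~\ref{prop: exist semi-framings}. I would handle this by letting the tubular thickness $\varepsilon(f,\theta)$ be a smooth function that vanishes at points where the inner and outer boundaries meet, constructed for example from $\mathrm{dist}(f(e^{2\pi i \theta}), \partial \bbD)$ via a smooth cutoff; this keeps $\cN_f$ inside $A_f$ throughout, makes the construction degenerate gracefully at thin parts, and reduces all remaining diffeological compatibility to classical smooth and holomorphic parametrisation theorems applied to each ingredient.
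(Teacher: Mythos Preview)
Your route differs from the paper's in a substantive way: instead of retaining the Dirichlet-based distribution $H^1$ and repairing its family-dependence by thickening the domain (which is what the paper does), you discard it entirely in favour of the inner-normal field $V_f^{\mathrm{inner}}$. That is a reasonable idea, and it sidesteps the Dirichlet issue, but as written there are two genuine gaps.

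First, the partition of unity. You assert that $\lambda_f,\mu_f$ depend smoothly on $f$, but give no construction. The difficulty is that $\mu_f$ must be supported in $\cN_f$, whose thickness you have made vanish at thin points; a bump of the form $\rho(s/\varepsilon(f,\theta))$ in tubular coordinates is singular where $\varepsilon=0$, so the naive construction does not produce a smooth function of $(f,z)$. The paper faces the same problem and resolves it by working on the total space $\underline A \subset \bbC\times\Ann$, which sits inside a Fr\'echet space isomorphic to $s$, and invoking the Kriegl--Michor theorem on smooth partitions of unity there. You need some comparable device.

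Second, transversality of $V_f$ along $\partial_{in}A_f$. Because $\cN_f$ shrinks to nothing at thin points, $\mu_f$ cannot equal $1$ on all of $\partial_{in}A_f$; near the thin locus you are effectively using $V^{\mathrm{rad}}$ alone. For a fixed $f$ one can argue that the radial field is transverse to $f(S^1)$ near thin points (since $f(S^1)$ and $S^1$ are tangent there), but you have not established a neighbourhood of the thin locus, \emph{varying smoothly with $f$}, on which this holds. Without that, the patched field need not be outward along $\partial_{in}$ for all $f$ simultaneously, and the flow construction breaks. The paper's cover $\{U,V\}$ is designed precisely so that each piece is transverse to \emph{both} boundaries on its domain, which makes the convex combination automatically transverse; your two pieces are transverse to different boundaries, so you owe a separate argument.

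A minor point: $N_f$ does not depend holomorphically on $f$, since it involves $|f'|$. This is harmless because the statement only asks for smoothness in the real diffeology, but you should drop the claim.
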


\begin{proof}
Almost all the steps of the proof of Proposition~\ref{prop: exist semi-framings} can be applied verbatim to the case of the universal family $\underline A$, with two exceptions that requires some care: the existence of smooth partitions of unity, and the smoothness in families of solutions to the Dirichlet problem.

With regard to the existence of smooth partitions of unitary, the two sets that need to be separated by a smooth function are
$\underline A_{\text{thin}}$, and the subset of $\partial \underline A$ which is not transverse to $H^2$.
These are disjoint closed subsets $K_1,K_2\subset \underline A$, and $\underline A$ is itself a subset of a Fr\'echet space isomorphic to the space $s$ of rapidly decreasing sequences. The closures of $K_1$ and $K_2$ inside the Fr\'echet space remain disjoint, and the existence of a smooth partition of unity follows from \cite[Thm 16.10]{KrieglMichor1997}.

In order to ensure the smoothness in families of solutions to the Dirichlet problem, we modify the domain on which the Dirichlet problem is formulated so as to eliminate thin parts.
Let $\lambda,\mu:\underline A\to [0,1]$ be the two functions forming the partition of unity, with $\mu=1$ on a neighbourhood $W$ of $\underline A_{\text{thin}}$.
Let $\nu:\partial_{out}\underline A\to \bbR_{\ge 0}$ be a smooth function
satisfying $\underline A_{\text{thin}}\subset \supp(\nu)\subset W$.
For any $(f,\gamma)\in \Univ_0 \times \Diff(S^1)\cong \Ann$, we may then solve the Dirichlet problem on the domain
\begin{equation}
\label{fix Dirichlet}
A_{f,\gamma} \cup \{z\in \bbC \,:\, 1\le |z|\le 1+\nu(z/|z|,f,\gamma) \}
\end{equation}
instead of $A_{f,\gamma}$.
Since \eqref{fix Dirichlet} has no thin parts, the solution of the Dirichlet problem depends smoothly on the parameters $f$ and $\gamma$.
\end{proof}

If follows that:

\begin{cor}\label{cor: semi annuli contractible}
The space $\Ann^{\mathit{sf}}$ of annuli equipped with a semi-framing is contractible.
\end{cor}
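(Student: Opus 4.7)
The plan is to exhibit an explicit contraction of $\Ann^{\mathit{sf}}$ to a single point. The basepoint will be $(A_{\id}, \pi)$, where $A_{\id} \in \Diff(S^1) \subset \Ann$ is the thin annulus with both parametrisations equal to $\id_{S^1}$, and $\pi: S^1 \times [0, 1] \to A_{\id} = S^1$ is the projection onto the first factor (a degenerate semi-framing). Given $(A, h) \in \Ann^{\mathit{sf}}$ and $s \in [0, 1]$, I would set $A_s := h(S^1 \times [0, s]) \subseteq A$, viewed as a sub-annulus with boundary parametrisations $\varphi_{in}^{A_s} := \varphi_{in}^A$ and $\varphi_{out}^{A_s} := h(-, s)$, and $h_s(\theta, t) := h(\theta, s t)$. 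The proposed contraction is $H: \Ann^{\mathit{sf}} \times [0, 1] \to \Ann^{\mathit{sf}}$ defined by $H((A, h), s) := (A_s, h_s)$.

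First I would check that $(A_s, h_s) \in \Ann^{\mathit{sf}}$ for every $s$. For $s > 0$ the sub-annulus $A_s$ is bounded by the two smoothly embedded curves $h(-, 0)$ and $h(-, s)$ (possibly meeting along thin parts), and $h_s$ is a semi-framing by direct inspection: it is surjective, fibrewise an embedding, has Jacobian determinant $s$ times that of $h$ (hence non-negative), and satisfies $h_s(-, 0) = \varphi_{in}^{A_s}$. For $s = 0$ the sub-annulus degenerates to the thin annulus $A_0 = \varphi_{in}^A(S^1)$ with $h_0(\theta, t) \equiv \varphi_{in}^A(\theta)$, and the diffeomorphism $(\varphi_{in}^A)^{-1}: A_0 \to S^1$ identifies $(A_0, h_0)$ with $(A_{\id}, \pi)$ as an element of $\Ann^{\mathit{sf}}$. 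Consequently $H(-, 1) = \id$ and $H(-, 0)$ is the constant map with value $(A_{\id}, \pi)$.

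It then remains to check that $H$ respects the real diffeology on $\Ann^{\mathit{sf}}$ based on manifolds with corners. Given a plot $M \to \Ann^{\mathit{sf}}$ presented by an embedded family $\underline A \to M$ together with a smooth family of semi-framings $\underline h: S^1 \times [0, 1] \times M \to \underline A$, the composed plot $M \times [0, 1] \to \Ann^{\mathit{sf}}$ will be presented by the family of (possibly partially thin) annuli over $M \times [0, 1]$ whose inner and outer boundary curves are $(\theta, m, s) \mapsto \underline h(\theta, 0, m)$ and $(\theta, m, s) \mapsto \underline h(\theta, s, m)$, together with the semi-framing $(\theta, t, m, s) \mapsto \underline h(\theta, s t, m)$. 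Both of these are manifestly smooth on the manifold with corners $M \times [0, 1]$.

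The main technical point will be the behaviour at the corner $s = 0$, where the sub-annulus collapses entirely to a thin annulus and $\underline h_s$ becomes rank one. This is precisely the kind of degeneration that the real diffeology with corners on $\Ann$ is designed to accommodate: the outer boundary curve converges smoothly to the inner boundary as $s \to 0$, and the limiting thin annulus sits inside $\Ann$ via the embedding $\Diff(S^1) \hookrightarrow \Ann$. Because the whole contraction is implemented by pure reparametrisation of $\underline h$ rather than by any new analytic construction, no further input (in particular no fresh appeal to the Dirichlet problem from Proposition~\ref{prop: exist semi-framings in families}) is required.
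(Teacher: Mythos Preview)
Your proof is correct and follows the same core idea as the paper's: the given semi-framing $h$ provides a path $s \mapsto \big(h(S^1 \times [0,s]),\, h|_{S^1\times[0,s]}\big)$ from the thin identity annulus to $(A,h)$, and these paths assemble into a smooth contracting homotopy. The paper's proof opens by invoking Proposition~\ref{prop: exist semi-framings in families} to \emph{choose} a semi-framing smoothly in $A$, but as you correctly point out this is unnecessary --- the semi-framing is already part of the data of a point of $\Ann^{\mathit{sf}}$ and tautologically varies smoothly with it --- so your version is slightly more direct.
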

\begin{proof}
By Proposition~\ref{prop: exist semi-framings in families} we may choose a semi-framing for each annulus $A\in \Ann^{\mathit{sf}}$, in a way that depends smoothly on the annulus.
A semi-framing $h:S^1\times[0,1]\to A$ provides a path $\{h|_{S^1\times[0,t]}\}_{t\in[0,1]}$ in $\Ann^{\mathit{sf}}$ from $A$ to the unique totally thin annulus.
Those paths assemble to a continuous, indeed smooth, contracting homotopy $\Ann^{\mathit{sf}} \times [0,1] \to \Ann^{\mathit{sf}}$.
\end{proof}

\begin{cor}\label{cor: exist framings}
Let $A \in \Ann$.
Then there exists a framing $h:S^1 \times [0,1] \to A$.
\end{cor}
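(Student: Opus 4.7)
The plan is to obtain a framing by correcting a semi-framing with a $t$-dependent reparametrisation of the $\theta$-coordinate.

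By Proposition~\ref{prop: exist semi-framings}, we may choose a semi-framing $h_0:S^1\times[0,1]\to A$. By definition, $h_0|_{S^1\times\{0\}}=\varphi_{in}$, while $h_0|_{S^1\times\{1\}}$ is \emph{some} orientation-preserving diffeomorphism $S^1\to\partial_{out}A$. Set
\[
\alpha \,:=\, \varphi_{out}^{-1}\circ h_0|_{S^1\times\{1\}}\;\in\;\Diff^+(S^1).
\]
Since $\Diff^+(S^1)$ is connected (in fact contractible), we can pick a smooth path $t\mapsto\alpha_t\in\Diff^+(S^1)$ with $\alpha_0=\id$ and $\alpha_1=\alpha^{-1}$, and we may arrange $\alpha_t$ to equal $\id$ on a neighbourhood of $t=0$ so that no smoothness is introduced at the inner boundary. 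Define
\[
\beta:S^1\times[0,1]\to S^1,\qquad \beta(\theta,t)\,:=\,\alpha_t(\theta),
\]
and put
\[
h(\theta,t)\,:=\,h_0\bigl(\beta(\theta,t),\,t\bigr).
\]

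It remains to verify that $h$ is a framing in the sense of Definition~\ref{def: framing of A}. Smoothness and surjectivity of $h$ are immediate, as is the fact that each $h|_{S^1\times\{t\}}$ is an embedding (since $h_0|_{S^1\times\{t\}}$ is, and $\alpha_t$ is a diffeomorphism). The boundary conditions hold by construction: $h(\theta,0)=h_0(\theta,0)=\varphi_{in}(\theta)$, and
\[
h(\theta,1)\,=\,h_0\bigl(\alpha^{-1}(\theta),1\bigr)\,=\,\varphi_{out}\bigl(\alpha(\alpha^{-1}(\theta))\bigr)\,=\,\varphi_{out}(\theta).
\]
Finally, writing $g(\theta,t):=(\beta(\theta,t),t)$, we have $h=h_0\circ g$ and
\[
\det J(h)\,=\,\bigl(\det J(h_0)\bigr)\!\circ\! g\,\cdot\,\partial_\theta\beta,
\]
which is non-negative because $\det J(h_0)\geq 0$ and $\partial_\theta\beta>0$.

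The only step deserving real thought is ensuring that the reparametrisation does not spoil non-negativity of the Jacobian, but as shown above this is automatic from the triangular form of $J(g)$ together with the fact that each $\alpha_t$ is orientation preserving. The contractibility of $\Diff^+(S^1)$ gives more than is needed to produce the path $\alpha_t$; only path-connectedness is used here.
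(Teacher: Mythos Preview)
Your proof is correct and follows essentially the same approach as the paper: start from a semi-framing (Proposition~\ref{prop: exist semi-framings}) and precompose with $(\theta,t)\mapsto(\alpha_t(\theta),t)$ for a smooth path $\alpha_t$ in $\Diff^+(S^1)$ correcting the outer boundary parametrisation. One small slip: $\Diff^+(S^1)$ is not contractible but only homotopy equivalent to $S^1$; since you only use path-connectedness this does not affect the argument.
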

\begin{proof}
By Proposition~\ref{prop: exist semi-framings}, there exists a semi-framing $h:S^1 \times [0,1] \to A$.
To adjust the parametrisation of the outgoing boundary, pick a path $\psi:[0,1]\to\Diff(S^1)$ connecting
\[
\psi_0:=(h|_{S^1 \times \{0\}})^{-1}\circ \phi_{in}
\qquad\text{and}\qquad
\psi_1:=(h|_{S^1 \times \{1\}})^{-1}\circ \phi_{out},
\]
and let $\Psi(\theta,t):=(\psi(t)(\theta),t)$.
The map $h\circ\Psi:S^1 \times [0,1] \to A$ is a framing.
\end{proof}

\begin{lem}
If $A$ is an annulus, then the space of framings has $\bbZ$ many path-connected components, indexed by their `winding number' (more precisely, $\pi_0$ of the space of framings is a torsor for $\bbZ$). Moreover, each connected component is contractible.
\end{lem}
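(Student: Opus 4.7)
The plan is to realize the space $\mathrm{Fr}(A)$ of framings of $A$ as the fibre of a fibration from a contractible space onto $\Diff^+(S^1) \simeq S^1$, and to read off its homotopy type from the associated fibration sequence. Denoting by $\mathrm{SFr}(A)$ the space of semi-framings of $A$, so that $\mathrm{Fr}(A) \subset \mathrm{SFr}(A)$ consists of those $h$ with $h|_{S^1 \times \{1\}} = \varphi_{out}$, I consider the outgoing-reparametrisation map
\[
\rho \colon \mathrm{SFr}(A) \longrightarrow \Diff^+(S^1), \qquad h \longmapsto \varphi_{out}^{-1} \circ h|_{S^1 \times \{1\}},
\]
whose fibre over $\id_{S^1}$ is precisely $\mathrm{Fr}(A)$.

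The first step will be to show that $\mathrm{SFr}(A)$ is contractible. I will argue that a semi-framing is equivalent data to (i) an oriented 1-dimensional foliation $\mathcal L$ of $A$ whose leaves meet $\partial A$ transversely (or reduce to points inside the thin part), and (ii) a smooth parametrisation of each leaf by $[0,1]$ whose starting endpoint matches the $\varphi_{in}$-parametrisation of $\partial_{in} A$. The convex-combination construction on distributions in the universal cover $\tilde{\bbS}_A$ of the unit tangent bundle, used in the proof of Proposition~\ref{prop: exist semi-framings}, actually produces a simultaneous contraction of the entire space of such foliations; the space of leaf parametrisations at fixed foliation is fibrewise convex, and combining these gives contractibility of $\mathrm{SFr}(A)$.

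The second step will be to verify that $\rho$ is a (diffeological) fibration. Given a smooth path $s \mapsto \psi_s$ in $\Diff^+(S^1)$ and a lift $h_0$ of $\psi_0$, I construct the lifted family $h_s := h_0 \circ \Psi_s$, where $\Psi_s(\theta, t) = (\eta_s(\theta, t), t)$ is a smooth family of orientation-preserving diffeomorphisms of $S^1 \times [0,1]$ satisfying $\eta_s(\theta, 0) = \theta$ and $\eta_s(\theta, 1) = (\psi_s \circ \psi_0^{-1})(\theta)$. Because $\Psi_s$ preserves the horizontal foliation and is fibrewise orientation-preserving on $S^1$, the composite $h_0 \circ \Psi_s$ remains a semi-framing with $\rho(h_s) = \psi_s$. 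With the two steps in hand, the long exact sequence of $\mathrm{Fr}(A) \to \mathrm{SFr}(A) \xrightarrow{\rho} \Diff^+(S^1)$ yields $\pi_0(\mathrm{Fr}(A)) \cong \pi_1(\Diff^+(S^1)) = \bbZ$ (as $\bbZ$-torsors) and $\pi_n(\mathrm{Fr}(A)) = 0$ for $n \geq 1$. The $\bbZ$-action on $\pi_0$ is realised explicitly by $h \mapsto h \circ \Psi^{(n)}$, where $\Psi^{(n)}(\theta, t) = (\theta + n f(t), t)$ for any smooth $f \colon [0,1] \to \bbR$ with $f(0) = 0$ and $f(1) = 1$; the integer $n$ is the winding number.

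The hard part will be Step 1, namely verifying contractibility of $\mathrm{SFr}(A)$ when $A$ has thin parts, where the distribution associated to a foliation becomes singular at the thin locus. This will be handled by working with lifted distributions on $\tilde{\bbS}_A$ and smooth partitions of unity on the ambient Fr\'echet space, exactly as in Propositions~\ref{prop: exist semi-framings} and~\ref{prop: exist semi-framings in families}. Upgrading the weak contractibility of each connected component (as furnished by the long exact sequence) to honest contractibility in the relevant diffeological sense then follows from the explicit smoothness of all of the above constructions.
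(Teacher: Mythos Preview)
Your fibration-sequence packaging is a legitimate alternative to the paper's quotient-by-loop-group argument, and the two are essentially dual: the paper takes $G=\Omega\Diff(S^1)\simeq\bbZ$ acting freely on $\mathrm{Fr}(A)$ with quotient the space of \emph{horizontal foliations} (paths of nested Jordan curves $t\mapsto h(S^1\times\{t\})$), then shows that quotient is contractible; you instead fibre $\mathrm{SFr}(A)$ over $\Diff^+(S^1)$ with fibre $\mathrm{Fr}(A)$. Either way, the substantive content is the same: one must prove that, after stripping the reparametrisation freedom, the remaining ``foliation'' data has trivial homotopy.

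The gap is in your Step~1. You assert that a semi-framing is equivalent to a \emph{transverse} 1-dimensional foliation (leaves running from $\partial_{in}A$ to $\partial_{out}A$) plus a leaf parametrisation, and then contract the space of such foliations by convex-combining lifted distributions in $\tilde\bbS_A$. Neither half of this is justified. First, a semi-framing $h$ need not determine a transverse foliation: the curves $t\mapsto h(\theta,t)$ can have intervals of zero velocity (the Jacobian is only required to be non-negative), and nothing forces the images for distinct $\theta$'s to be disjoint embedded arcs forming a smooth foliation. The natural data carried by $h$ is instead the \emph{horizontal} foliation by the Jordan curves $h(S^1\times\{t\})$. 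Second, the convex-combination device in Proposition~\ref{prop: exist semi-framings} is used to patch two specific distributions via a partition of unity, not to contract a space of distributions. Sections of $\bbS_A\to A$ form a space with $\pi_0\cong H^1(A;\bbZ)\cong\bbZ$, so that space is certainly not contractible by naive convex interpolation; you would at minimum need to argue that transversality to $\partial A$ singles out a single component and that the interpolation preserves transversality, none of which you address.

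The paper's proof confronts exactly this difficulty: it shows the space of horizontal foliations is contractible by constructing an explicit contracting homotopy $(g,t)\mapsto k_t\cup g|_{[t,1]}$ (concatenating a canonical foliation of $A_{[0,t]}$ supplied by Proposition~\ref{prop: exist semi-framings in families} with the tail of $g$), and then invokes a separate smoothing lemma to repair the non-smoothness at the concatenation point. That concatenation-plus-smoothing step is the genuine work, and your proposal does not supply a substitute for it.
\smallskip
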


\begin{proof} Let $G\subset \Diff(S^1)^{[0,1]}$ be the subgroup of smooth paths $[0,1]\to \Diff(S^1)$ that begin and end at $\id_{S^1}$.
The group $G$ is homotopy equivalent to $\bbZ$, and acts freely and properly on the space of framings of $A$. 
Let us call the quotient of the space of framings by the action of $G$ the space of `horizontal foliations' of $A$.
To prove our result, it suffices to show that the space of horizontal foliation of $A$ is contractible.

Let $\cJ$ denote the space of unparameterised Jordan curves in $A$.
We may identify horizontal foliations of $A$ with certain paths $[0,1]\to \cJ$ connecting $\partial_{in}A$ and $\partial_{out}A$.

Let $f:[0,1] \to \cJ$ be the horizontal foliation provided by Corollary~\ref{cor: exist framings}.
Given another horizontal foliation $g:[0,1] \to \cJ$, represented by some framing $h:S^1\times[0,1]\to A$,
let us write $A_{[x,y]}\subset A$ for the image of $S^1\times[x,y]$ under $h$,
and let us write $g|_{[x,y]}$ for the horizontal foliation of $A_{[x,y]}$ given by the restriction of $g$.

Applying Proposition~\ref{prop: exist semi-framings} to $A_{[0,t]}$, we get a horizontal foliation $k_t:[0,t]\to \cJ$ of $A_{[0,t]}$, which may furthermore be chosen to depend smoothly on $A$ and on $t$.
If we had included piecewise smooth maps into $\cJ$ in our space of horizontal foliations,
then the concatenation $k_t \cup g|_{[t,1]}$ of $k_t$ with the horizontal foliation $g|_{[t,1]}$ of $A_{[t,1]}$ would have been a contracting homotopy, with contraction point $f$, of the space of horizontal foliations.

Unfortunately, $k_t \cup g|_{[t,1]}$ is typically not smooth at $t$.
However, we may precompose the map $[0,1]^2 \to \cJ$ given by $(t,s) \mapsto (k_t \cup g|_{[t,1]})(s)$  with the map $\varphi$ from Lemma~\ref{lem: smoothing out the diagonal} below to obtain a smooth family of smooth horizontal foliations connecting $g$ and $f$.
In other words, a path from $g$ to $f$.
This path depends smoothly on $g$ (and $f$ doesn't depend on $g$), so this is a contracting homotopy of the space of horizontal foliations.
\end{proof}

\begin{lem}\label{lem: smoothing out the diagonal}
There exists a smooth surjective function
\[
\varphi(x,y) = (\varphi_1(x), \varphi_2(x,y)): [0,1]^2 \to [0,1]^2,
\]
such that 
$\varphi(0,y)=(0,y)$, $\varphi(1,y)=(1,y)$, $\partial_y \varphi_2(x,y)\ge 0$ and such that the following property holds:
for every smooth manifold $M$ and every piecewise smooth map $F:[0,1]^2 \to M$ which is smooth on $\{(x,y):x \le y\}$, and smooth on $\{(x,y):x \ge y\}$ the composite $F \circ \varphi$ is smooth.
\end{lem}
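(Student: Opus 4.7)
I take $\varphi_1$ to be a smooth non-decreasing surjection with flat plateaus $\varphi_1 \equiv 0$ on $[0,\delta]$ and $\varphi_1 \equiv 1$ on $[1-\delta,1]$ (strictly increasing in between), and construct $\varphi_2$ so that for each $x \in (\delta, 1-\delta)$ the fibre $\varphi_2(x,\cdot)$ is a smooth non-decreasing surjection of $[0,1]$ that is constantly equal to $\varphi_1(x)$ on an interval $[a(x), b(x)] \subset (0,1)$, with $\varphi_2(x,y) - \varphi_1(x)$ vanishing to infinite order on the plateau boundary curves $y = a(x)$ and $y = b(x)$. For $x \in [0,\delta] \cup [1-\delta,1]$, $\varphi_2(x,\cdot)$ stays close to the identity, so that $\varphi$ maps a small neighborhood of $(0,0)$ entirely into the target half-plane $\{y' \ge x'\}$ and a neighborhood of $(1,1)$ entirely into $\{y' \le x'\}$.

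\textbf{Construction.} I implement $\varphi_2$ by an integrated density $\varphi_2(x,y) := \int_0^y \mu(x,s)\,ds \big/ \int_0^1 \mu(x,s)\,ds$, where $\mu : [0,1]^2 \to [0,\infty)$ is smooth. The support of $\mu$ is the complement of a smoothly varying ``pinched strip'' whose fibre over $x$ is $[a(x), b(x)]$, reducing to $\{0\}$ on $[0,\delta]$ and to $\{1\}$ on $[1-\delta,1]$, with $\mu$ vanishing to infinite order on the strip boundary. The auxiliary weight in $\mu$ is tuned so that $\int_0^{a(x)} \mu(x,s)\,ds = \varphi_1(x) \int_0^1 \mu(x,s)\,ds$ (forcing the plateau value to be $\varphi_1(x)$), and $\mu(0,\cdot) \equiv \mu(1,\cdot) \equiv 1$ (recovering $\varphi_2(0,y) = \varphi_2(1,y) = y$). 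Flatness of $a,b,\mu$ at $x = \delta, 1-\delta$ guarantees smoothness of the whole construction.

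\textbf{Smoothness of $F \circ \varphi$.} Let $F^-$ and $F^+$ be smooth extensions of $F|_{\{y \le x\}}$ and $F|_{\{y \ge x\}}$ to all of $[0,1]^2$; they agree on the diagonal, so $F^-(x',z) - F^+(x',z) = (z - x')\, G(x',z)$ for some smooth $G$. Define $\tilde F^\pm(x,y) := F^\pm(\varphi_1(x), \varphi_2(x,y))$; both are smooth on $[0,1]^2$. On the region $\{\varphi_2 \le \varphi_1\} = \{y \le b(x)\}$ we have $F \circ \varphi = \tilde F^-$, and on $\{y \ge a(x)\}$ we have $F \circ \varphi = \tilde F^+$; in a neighborhood of either corner, only one of the two pieces is needed because $\varphi$ lands in a single target half-plane. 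The difference $\tilde F^- - \tilde F^+ = (\varphi_2 - \varphi_1)\, G(\varphi_1, \varphi_2)$ vanishes to infinite order on the plateau boundary curves $y = a(x), b(x)$ (because $\varphi_2 - \varphi_1$ does), so the two smooth pieces of $F \circ \varphi$ glue into a $C^\infty$ function on $[0,1]^2$.

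\textbf{Main obstacle.} The real subtlety is reconciling a $2$-dimensional plateau (needed to absorb the singularity of $F$ across the target diagonal) with the boundary conditions $\varphi(0,y) = (0,y)$ and $\varphi(1,y) = (1,y)$: a naive plateau touching the corner $(0,0)$ would force $\partial_y \varphi_2 = 0$ there, contradicting $\partial_y \varphi_2(0,0) = 1$ coming from $\varphi_2(0,y) = y$. The fix is to let $\varphi_1$ itself plateau at $0$ on $[0,\delta]$ (and at $1$ on $[1-\delta,1]$), so that the $y$-plateau of $\varphi_2$ can degenerate flatly to a single point $\{y = 0\}$ (respectively $\{y = 1\}$) near the corner while $\varphi$ locally maps into a single side of the target diagonal; there the composite $F \circ \varphi$ is trivially smooth as $F^+ \circ \varphi$ (or $F^- \circ \varphi$).
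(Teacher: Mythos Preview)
Your approach is sound in spirit and rests on the same mechanism as the paper's --- arranging that $\varphi_2-\varphi_1$ vanishes to infinite order wherever the image of $\varphi$ meets the target diagonal, so that the two smooth branches $\tilde F^\pm$ of $F\circ\varphi$ glue $C^\infty$. The route, however, is genuinely different. The paper takes a single smooth strictly increasing function $f$ that is flat to infinite order at $0$, sets $f_t(y)=(f(y-t)-f(-t))/(f(1-t)-f(-t))$, and puts $\varphi_1(x)=f_{t(x)}(t(x))$, $\varphi_2(x,y)=f_{t(x)}(y)$ on a middle interval (with linear interpolation to the identity near $x=0,1$). Here $\varphi_2(x,\cdot)$ is \emph{strictly} increasing and the preimage of the target diagonal is the single curve $y=t(x)$; the key identity $\varphi_2-\varphi_1=f(y-t(x))/D(x)$ makes the infinite-order flatness immediate. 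Your construction instead creates a two-dimensional plateau $\{a(x)\le y\le b(x)\}$ on which $\varphi_2\equiv\varphi_1$, obtained by integrating a density $\mu$ that vanishes on a strip. This also works, but costs you an implicit weight-tuning step (to force the plateau value to equal $\varphi_1(x)$) and a more delicate transition analysis. The paper's version is shorter and needs no such tuning; your version has the minor conceptual advantage that the infinite-order vanishing of $\varphi_2-\varphi_1$ on $\partial\{a\le y\le b\}$ is automatic from smoothness and vanishing on a set with interior.

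There is one genuine inconsistency in your sketch that you should repair. You assert simultaneously that the strip reduces to $\{0\}$ on $[0,\delta]$, that $\mu$ vanishes to infinite order on the strip boundary, and that $\mu(0,\cdot)\equiv 1$; these are incompatible at $(0,0)$. The fix is easy but should be stated: let the strip be \emph{empty} for $x$ near $0$ and $1$ (say for $x\in[0,\delta]\cup[1-\delta,1]$), and only open up for $x\in(\delta,1-\delta)$, pinching off at interior points $(\delta,y_0)$ and $(1-\delta,y_1)$. On $[0,\delta]\times[0,1]$ one then has $\varphi_1\equiv 0$ so $\varphi$ lands in $\{y'\ge x'\}$ and $F\circ\varphi=\tilde F^+$ there; no plateau is needed. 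The flatness of $\varphi_1$ at $x=\delta$ (forced by your plateau hypothesis on $\varphi_1$) then ensures that $\varphi_2-\varphi_1$ still vanishes to infinite order at the pinch points, and your gluing argument goes through.
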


\begin{proof}
Let $f:\bbR\to \bbR$ be a smooth strictly increasing function such that $f$ and all of its derivatives vanish at $x=0$.
Set
\[
f_t(y) := \big( f(y-t) - f(-t) \big) \big/ \big( f(1-t) - f(-t) \big)
\]
and observe that $f_t(0)=0$, $f_t(1)=1$, and that all the derivatives of $f_t$ vanish at $y=t$.
Let $t:[0,1] \to [0,1]$ be a smooth weakly increasing function that sends $[0,1/3]$ to $0$ and $[2/3,1]$ to $1$.
We then let $\varphi_1(x) := f_{t(x)}(t(x))$, and
\[
\varphi_2(x,y) := \left\{ 
\begin{array}{cl} 
\lambda(x)f_0(y) + (1-\lambda(x))y & x \in [0,1/3]\\
f_{t(x)}(y) & x \in [1/3,2/3]\\
\mu(x)y + (1-\mu(x))f_1(y) & x \in [2/3, 1]
\end{array}
\right.
\]
where $\lambda$ increases from $0$ to $1$ over the interval $[0,1/3]$,
$\mu$ increases from $0$ to $1$ over the interval $[2/3,1]$, and both $\lambda$ and $\mu$ have
all derivatives that vanish at the endpoints.
The map $\varphi(x,y) = (\varphi_1(x), \varphi_2(x,y))$ has all the desired properties.
\end{proof}

\begin{prop}\label{prop: Exp is surjective and admits local sections}
The exponential map $\cP^{ge} \to \Ann$ given by
\[
X \mapsto \prod_{t_0\ge \tau\ge 0} \Exp\big(X(\tau)d\tau\big).
\]
is surjective and admits local sections.
\end{prop}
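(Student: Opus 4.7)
\emph{Surjectivity} is immediate from Corollary~\ref{cor: exist framings}: given any $A\in\Ann$, pick a framing $h:S^1\times[0,1]\to A$, and set $X:=-h_t/h_\theta$. By definition of geometric exponentiability, $X\in\cP^{ge}$ and $\prod_{1\ge\tau\ge0}\Exp\!\big(X(\tau)d\tau\big)=A$.

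For the existence of \emph{local sections}, the strategy is to upgrade the globally smooth family of semi-framings of Proposition~\ref{prop: exist semi-framings in families} to a smooth family of full framings, defined on a neighbourhood of any chosen base point $A_0\in\Ann$. Let $A\mapsto h_A$ be the smooth assignment of semi-framings. Because $h_A|_{S^1\times\{1\}}$ is an orientation-preserving diffeomorphism onto $\partial_{out}A$, the assignment
\[
\Ann\longrightarrow \Diff(S^1),\qquad A\mapsto \psi_1(A):=(h_A|_{S^1\times\{1\}})^{-1}\circ\varphi_{out}^{A}
\]
is smooth. Following the recipe of Corollary~\ref{cor: exist framings}, promoting the semi-framing to a framing amounts to choosing, for each $A$, a smooth path $\psi_A:[0,1]\to \Diff(S^1)$ from $\id$ to $\psi_1(A)$, and to doing so smoothly in $A$.

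The plan for the last step is to perform this choice locally using that $\Diff(S^1)$ is a Fr\'echet Lie group. On a sufficiently small open neighbourhood $V\subset\Diff(S^1)$ of $\psi_1(A_0)$, one can select a smooth map $\sigma:V\to C^\infty([0,1],\Diff(S^1))$ with $\sigma(\phi)(0)=\id$ and $\sigma(\phi)(1)=\phi$, for instance by fixing once and for all a smooth path $\gamma_0$ from $\id$ to $\psi_1(A_0)$, picking a chart of $\Diff(S^1)$ at $\psi_1(A_0)$, and concatenating $\gamma_0$ with a short correction path obtained by linear interpolation in the chart between $\psi_1(A_0)$ and $\phi$. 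Setting $\psi_A:=\sigma(\psi_1(A))$ and $\tilde h_A(\theta,t):=h_A(\psi_A(t)(\theta),t)$ on $U:=\psi_1^{-1}(V)$ produces a smooth family of framings of the annuli in $U$: the boundary conditions at $t=0,1$ are correct by construction, and the non-negative Jacobian condition is preserved because $\partial_\theta\psi_A(t)>0$.

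Finally, one defines the local section by $s(A):=-(\tilde h_A)_t/(\tilde h_A)_\theta$. Then $s(A)\in\cP^{ge}$ with $\Exp(s(A))=A$ by definition, and $s$ is smooth in $A$ (in the sense of the relevant diffeologies) because it is a rational expression in first derivatives of the smooth family $\tilde h_A$. The main delicate point is the construction of the path-valued map $\sigma$: it rests on the existence of smooth charts for $\Diff(S^1)$ as a Fr\'echet manifold, \emph{not} on surjectivity of the Lie group exponential (which famously fails) — precisely the reason we cannot simply take $\psi_A(t)=\exp(tX)$.
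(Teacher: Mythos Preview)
Your proof is correct and follows essentially the same approach as the paper: surjectivity from Corollary~\ref{cor: exist framings}, and local sections by upgrading the global family of semi-framings from Proposition~\ref{prop: exist semi-framings in families} to a local family of framings via a locally smooth choice of paths in $\Diff(S^1)$ correcting the outgoing boundary parametrisation. The paper simply asserts that this last step ``can also be done in families, but only locally in the parameter space''; you supply an explicit mechanism (chart-based interpolation near $\psi_1(A_0)$), which is a reasonable way to fill in that gap. One minor point: your concatenation of $\gamma_0$ with the correction path needs a smoothing (e.g.\ sitting instants or a reparametrisation) to produce a genuinely smooth path in $t$, but this is routine and the paper does not address it either.
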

\begin{proof}
The surjectivity claim is the statement of Corollary~\ref{cor: exist framings}.
The existence of local sections is the fact that the proof of Corollary~\ref{cor: exist framings} works in families.
Specifically, semiframings may be chosen in families by Proposition~\ref{prop: exist semi-framings in families}.
It then remains to choose paths $[0,1]\to\Diff(S^1)$ that adjust the outgoing boundary parametrisations, which can also be done in families, but only locally in the parameter space.
\end{proof}

Recall that $\cX(S^1)$ denotes the Lie algebra of complexified vector fields on $S^1$.
Let $\cX^{univ}\subset \cX(S^1)$ be the closed span of $\{L_{-1}, L_0, L_1, L_2, \ldots \}$, and
let $\cP^{univ} \subset \cP$ be the space of paths whose image lies in $\cX_{in}^{univ}:= \cX^{univ} \cap \cX^{in}$.
We had left open the question of whether every path in $\cP$ is geometrically exponentiable (see Remark~\ref{rem: conjecture that all paths are geometrically exponentiable}). 
We record here a positive result in the special case of paths that take values in $\cX^{univ}$.

\begin{prop}
If $X \in \cP^{univ}$, then $\prod_{t_0\ge \tau\ge 0} \Exp\big(X(\tau)d\tau\big)$ exists and lies in $\Univ$.
In particular, $\cP^{univ} \subset \cP^{ge}$.
\end{prop}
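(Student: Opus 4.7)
The plan is to realise the path $X\in\cP^{univ}$ as the boundary data of a time-dependent flow on the closed disc $\bbD$, and then read the desired annulus and framing off that flow.

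First I would identify $\cX^{univ}$ with the space of holomorphic vector fields on $\bbD$ smooth up to the boundary. Since $L_n=-z^{n+1}\partial_z$ extends holomorphically to $\bbD$ for every $n\ge -1$, a vector field $f(\theta)\partial_\theta\in\cX(S^1)$ lies in $\cX^{univ}$ iff the boundary values $V(e^{i\theta}):=ie^{i\theta}f(\theta)$ come from a holomorphic function $V:\bbD\to\bbC$; the corresponding holomorphic extension is then $\tilde X:=V\partial_z$. A direct Fourier calculation shows that the cone condition $\mathrm{Im}(f)\ge 0$ (i.e.\ $f\partial_\theta\in\cX^{in}$) translates into $\mathrm{Re}(\bar zV(z))\le 0$ for $|z|=1$, which is exactly the statement that $\tilde X$ is (weakly) inward-pointing at $\partial\bbD$. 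Reading this identification backwards gives $X(\theta,t)=\tilde X(e^{i\theta},t)/(ie^{i\theta})$.

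Next I would build the two-parameter flow $\phi^{b,a}:\bbD\to\bbD$, defined for $0\le a\le b\le t_0$ by
\[
\partial_b\phi^{b,a}(z)=\tilde X(\phi^{b,a}(z),b),\qquad \phi^{a,a}=\id.
\]
The weak inward-pointing condition yields $\tfrac{d}{db}|\phi^{b,a}(z)|^2\le 0$ at boundary points, so trajectories never leave $\bbD$. Standard ODE theory (and the variational equation for the derivative $(\phi^{b,a})'$, started from $1$) makes each $\phi^{b,a}$ into a holomorphic self-embedding of $\bbD$, smooth up to the boundary, with nowhere-vanishing derivative. In particular $\phi^{t_0,0}\in\Univ$. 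I would then set $A:=A_{\phi^{t_0,0}}=\bbD\setminus\phi^{t_0,0}(\mathring\bbD)\in\Univ\subset\Ann$ and propose as the framing
\[
h:S^1\times[0,t_0]\to A,\qquad h(\theta,t):=\phi^{t_0,t}(e^{i\theta}).
\]

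Finally I would verify that $h$ really is a framing realising $X$. The boundary identities $h(\theta,0)=\phi^{t_0,0}(e^{i\theta})=\varphi_{in}(\theta)$ and $h(\theta,t_0)=e^{i\theta}=\varphi_{out}(\theta)$ are immediate. Differentiating the cocycle $\phi^{t_0,t}=\phi^{t_0,t+\varepsilon}\circ\phi^{t+\varepsilon,t}$ at $\varepsilon=0$ yields $\partial_t\phi^{t_0,t}(z)=-(\phi^{t_0,t})'(z)\,\tilde X(z,t)$; combining this with $h_\theta=(\phi^{t_0,t})'(e^{i\theta})\cdot ie^{i\theta}$ and the first-step identity for $X$ gives $-h_t/h_\theta=\tilde X(e^{i\theta},t)/(ie^{i\theta})=X(\theta,t)$, as required. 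Hence $X\in\cP^{ge}$ and $\prod_{t_0\ge\tau\ge 0}\Exp\!\big(X(\tau)\,d\tau\big)=A\in\Univ$.

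The main technical point is the regularity of the flow $\phi^{b,a}$: smoothness up to the boundary, fiberwise holomorphicity in $z$, injectivity, and nowhere-vanishing derivative. Confinement of the flow to $\bbD$ is precisely where the weak inward-pointing hypothesis is essential; the rest follows from standard smooth and holomorphic dependence theorems for parameter-dependent ODEs (together with the Cauchy--Lipschitz flow property for injectivity).
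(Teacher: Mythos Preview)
Your proposal is correct and follows essentially the same approach as the paper's proof. Both arguments extend the path $X$ to a time-dependent holomorphic vector field on $\bbD$ (your $\tilde X=V\partial_z$ is the paper's $Y(z,t)\partial_z$), integrate the flow equation $\partial_s g = Y(g,s)$ starting at time $t$ (your $\phi^{s,t}$ is the paper's $g(\cdot,t,s)$), set the framing to be $h(\theta,t)=\phi^{t_0,t}(e^{i\theta})$, and verify $-h_t/h_\theta=X$ by differentiating the cocycle identity $\phi^{t_0,t}=\phi^{t_0,s}\circ\phi^{s,t}$ (equivalently $g(g(z,t,s),s,1)=g(z,t,1)$); you are if anything more explicit than the paper about the inward-pointing check and the regularity of the flow up to the boundary.
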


\begin{proof}
We must construct an annulus $A$ and a framing $h:S^1 \times [0,1] \to A$ with $X=-\tfrac{h_t}{h_\theta}$.

Let $Y(z,t):=i z X(\log(z)/i,t)$, so that $Y(e^{i\theta},t)\partial_z=X(\theta,t)\partial_\theta$.
In terms of the new variable $z=e^{i \theta}$, the desired equation
$X(\theta,t) = -\tfrac{h_t}{h_\theta}$ becomes $Y(z,t) = -\tfrac{k_t}{k_z}$, where $k$ and $h$ are related by $k(z,t)=h(\log(z)/i,t)$.

Let $g(z,t,s)$ be the solution of the holomorphic flow equation 
\[
g_s(z,t,s) = Y(g(z,t,s),s),
\]
defined for $t \le s \le 1$, with initial value $g(z,t,t) = z$.
Note that since $X \in P[\cX^{in}]$, the vector fields $Y(z,t) \partial_z$ are inward pointing on the unit circle, and thus the flow exists for all times $s\in [t,1]$. Each map $z\mapsto g(z,t,s)$ is a univalent map $\bbD\to \bbD$.

We set $k(z,t) := g(z,t,1)$, and check that it satisfies the desired equation $Y(z,t) = -\tfrac{k_t}{k_z}$. In terms of the function $g$, this reads:
\[
g_t(z,t,1) = -g_z(z,t,1) Y(z,t).
\]

For that, we differentiate in $s$ the equation $g(g(z,t,s),s,1) = g(z,t,1)$ to get
\begin{align*}
g_z(g(z,t,s),s,1) &\cdot g_s(z,t,s) + g_t(g(z,t,s),s,1) = 0
\Rightarrow\\
g_t(g(z,t,s),s,1) &= - g_z(g(z,t,s),s,1)  Y(g(z,t,s),s).
\end{align*}
Here, $g_t$ denotes the differentiation of $g(\cdot,\cdot,\cdot)$ in the second input.
Setting $s = t$ yields $g_t(z,t,1) = - g_z(z,t,1) Y(z,t)$.

Finally, we define $A$ to be the image of $S^1 \times [0,1]$ under $k$, equivalently $A=\bbD\setminus k(\mathring\bbD,0)$. The latter description is visibly in $\Univ$.
\end{proof}

The previous proposition proved the existence of the exponential map $\cP^{univ} \to \Univ$.
We now show that it is surjective, and that it admits local sections.

\begin{prop} 
The exponential map $\cP^{univ} \to \Univ$ is surjective and admits local sections.
\end{prop}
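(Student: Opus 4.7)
The plan is to parallel the proof of Proposition~\ref{prop: Exp is surjective and admits local sections}, but to arrange that the resulting framing of $A_f$ extends to a holomorphic family of univalent maps on all of $\bbD$. Given $f\in\Univ$, the goal is to construct a smooth family $\{f_t\}_{t\in[0,1]}$ of univalent maps $f_t:\bbD\to\bbD$ with $f_0=f$, $f_1=\id$, and with $f_t(\mathring\bbD)$ monotonically increasing in $t$. The vector field
\[
Y(z,t):=-\partial_t f_t(z)/\partial_z f_t(z)
\]
is then well-defined (by univalence of $f_t$), holomorphic in $z$, and smooth up to the boundary. Setting $X(\theta,t):=-ie^{-i\theta}Y(e^{i\theta},t)$, the Taylor expansion $Y(z,t)=\sum_{n\ge 0}a_n(t)z^n$ gives $X(\cdot,t)\partial_\theta=-\sum_{n\ge 0}a_n(t)L_{n-1}\in\cX^{univ}$, while the monotonicity $f_t(\bbD)\subset f_{t+\varepsilon}(\bbD)$ enforces the inward-pointing condition $\Im X\ge 0$. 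The map $h(\theta,t):=f_t(e^{i\theta})$ is then a framing of $A_f$ with $-h_t/h_\theta=X$, showing $A_f=\prod_{1\ge\tau\ge 0}\Exp(X(\tau)d\tau)$.

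To construct $\{f_t\}$, I will foliate $A_f$ by smooth Jordan curves --- each bounding a simply connected subdomain $\Omega_t\subset\bbD$ containing $f(\bbD)$, with $\Omega_0=f(\bbD)$ and $\Omega_1=\bbD$ --- by solving the Dirichlet problem on $A_f$ as in Proposition~\ref{prop: exist semi-framings}, using the enlargement trick of Proposition~\ref{prop: exist semi-framings in families} to handle any thin parts. The Riemann maps $f_t:\bbD\to\Omega_t$ are then supplied by the families version of the smooth-boundary Riemann mapping theorem (Theorem~\ref{thm Riemann mapping theorem in families}), with normalisations chosen to interpolate smoothly between those realising $f_0=f$ and $f_1=\id$; for instance, by anchoring each $f_t$ at a smoothly varying interior point $a_t$ running from $f^{-1}(a)$ to $a$ for a fixed $a\in\mathring{f}(\mathring\bbD)$, together with a compatible choice of $\arg f_t'(a_t)$.

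Local sections will follow because every ingredient above depends smoothly on $f\in\Univ$: the foliation in families by Proposition~\ref{prop: exist semi-framings in families}, and the Riemann maps by the families version of Theorem~\ref{thm Riemann mapping theorem in families}. A local section of $\cP^{univ}\to\Univ$ is then obtained by carrying out the construction uniformly over a neighbourhood of a given $f$. The main anticipated obstacle is matching the boundary parametrisations: the Riemann maps $f_t$ are determined only modulo an $\Aut(\bbD)$-action, and imposing both $f_0|_{S^1}=f|_{S^1}$ and $f_1|_{S^1}=\id$ constrains the normalisation globally. A fix by post-composition with a path in $\Diff(S^1)$ (as used in Corollary~\ref{cor: exist framings}) would destroy the holomorphic extension of $X$, so the normalisations of the $f_t$ must be chosen compatibly from the outset.
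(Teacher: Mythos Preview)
Your approach is correct and is essentially the same strategy as the paper's: foliate the annulus $A_f$ by Jordan curves, then apply the Riemann mapping theorem to the bounded domains to obtain a one-parameter family of univalent maps whose boundary restriction furnishes a framing with $X\in\cP^{univ}$. The paper streamlines two of your steps. First, rather than re-deriving a foliation via the Dirichlet problem, it invokes the already-proved local sections of $\cP^{ge}\to\Ann$ (Proposition~\ref{prop: Exp is surjective and admits local sections}) to obtain a framing $h_A$ of $A$, and takes the discs $D_{A,t}:=\bbD\cup A_{[t,1]}$ inside $\bbD\cup A$. Second, instead of interpolating normalisations to force $f_0=f$ and $f_1=\id$, it simply fixes a single normalisation (value and tangent direction at $0\in\bbD$) for all the Riemann maps $\varphi_{A,t}:\bbD\to D_{A,t}$ and then sets $\psi_{A,t}:=\varphi_{A,0}^{-1}\circ\varphi_{A,t}$. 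This composition trick automatically yields $\psi_{A,0}=\id$ and, because $\varphi_{A,0}$ is an isomorphism of discs with parametrised boundary, transports the inner parametrisation correctly without any endpoint-matching gymnastics. Your normalisation-interpolation idea also works, but the description you give (``$a_t$ running from $f^{-1}(a)$ to $a$'') is not quite the right recipe: what you actually need is $f_t(0)$ running from $f(0)$ to $0$ together with $\arg f_t'(0)$ running from $\arg f'(0)$ to $0$, so that uniqueness of the Riemann map forces $f_0=f$ and $f_1=\id$ exactly.
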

\begin{proof}
Recall from Proposition \ref{prop: Exp is surjective and admits local sections} that the time-ordered exponential map $\cP^{ge} \to \Ann$ admits local sections.
Given some annulus $A_0 \in \Univ$, pick a local section defined on a neighborhood $U\subset \Univ$ of $A_0$ with values in $\cP^{ge}$.
For every annulus $A \in U$ we thus have a framing $h_A: S^1 \times [0,1] \to A$.
As before, we write $A_{[x,y]}$ for $h_A(S^1 \times [x,y])$.

Weld in a disc $\bbD \cup A$, and consider the family of discs $D_{A,t} \subset \bbD \cup A$ given by 
\[
D_{A,t} := \bbD \cup A_{[t,1]}.
\]
This is a smoothly varying family of discs by Proposition~\ref{prop: conformal welding of annuli with disc --- families}. 
Now apply the Riemann mapping theorem to each $D_{A,t}$, while fixing a value and a direction at $0 \in \bbD$.
In that way, we get a smooth family of maps $\varphi_{A,t} : \bbD \to \bbD\cup A$.
The image of $\varphi_{A,t}$ is the subset $D_{A,t}$ of $\bbD \cup A$.
From that, we get a family of elements of $\Univ$, namely 
\[
\psi_{A,t}:=\varphi_{A,0}^{-1} \circ \varphi_{A,t},
\]
and the path associated to this framing
\[
X_A(\log(z)/i,t):= \left.-\frac{\partial_t  \psi_{A,t}(z)}{i z \partial_z \psi_{A,t}(z)} \right|_{S^1 \times [0,1]} \in P[\cX^{in}_{\Univ}]
\]
is the value at $A\in U\subset \Univ$ of our desired section $U\to \cP^{univ}$.
\end{proof}

%
%
%
%
%

\section{Central extension of the semigroup of annuli}

The Virasoro Lie algebra
$\{\sum a_nL_n+kC : a_n, k \in \bbC, \text{ finitely many $a_n$ non-zero}\}$ is
spanned by $\{L_n\}_{n\in \mathbb Z}$ and $C$, and has commutation relations
\[
[L_m,L_n] = (m-n)L_{n+m} + \frac{C}{12} (m^3-m) \delta_{n+m,0}.
\]
Letting $\mathrm{Witt}$ be the Lie algebra of complexified vector fields $f(z)\partial_z$ on the circle, with $f$ a Laurent polynomial,
we then have a well-known short exact sequence (a central extension) of Lie algebras:
\[
0\to \bbC \to \Vir \to \mathrm{Witt} \to 0
\]
These algebras admit completions, where the sequence of coefficients $(a_n)$ is allowed to be rapidly decreasing (as opposed to having finite support), and $f$ is allowed to be smooth (as opposed to being a Laurent polynomial).
The completion of $\mathrm{Witt}$ is the Lie algebra previously denoted $\cX(S^1)$.

The goal of this section is to construction a central extension
\begin{equation}\label{eq: centr ext 0-C-tAnn-Ann-0}
0\to \bbC \times \bbZ \to \tAnn \to \Ann \to 0.
\end{equation}
of the semigroup of annuli by the group $\bbC\times\bbZ$ that corresponds to the Virasoro central extension of $\cX(S^1)$.

We will construct $\tAnn$ as a complex diffeological space, and we will show that \eqref{eq: centr ext 0-C-tAnn-Ann-0} is a central extension of diffeological semigroups, both for the real diffeology (based on manifolds with corners) and for the complex diffeology, in the following sense:

\begin{defn}\label{def: central extension of diffeological semigroups}
A \emph{central extension of diffeological semigroups} is a homomorphism of diffeological semigroups
\[
\tilde G \to G
\]
whose kernel $Z$ is a group, and which is a $Z$-principal bundle, meaning that for every smooth (holomorphic) map $M\to G$ from some finite dimensional real (complex) manifold $M$ and every point $m\in M$, there exists an open neighbourhood $U\subset M$ such that the pullback 
$\tilde G\times_G U \to U$
is $Z$-equivariantly isomorphic to a product $Z\times U \to U$.
\end{defn}

\noindent
We begin by constructing the semigroup $\tAnn$.

A technical annoyance of working with semigroups as opposed to groups is that the left-translation map $A \cup -:\Ann\to \Ann$ does not induce an isomorphism of tangent spaces (see Proposition~\ref{prop: tangent space of Ann}).
This makes it tricky to work with the notions of left-invariant vector field and left-invariant differential forms on $\Ann$.
To deal with that technical difficulty, we consider the following replacement: $\Ann^{\le \ast} := \bigsqcup_{A\in \Ann}\Ann^{\le A}$ (equipped with the disjoint union topology/diffeology), where 
\[
\Ann^{\le A}:= \{A_1 \in \Ann \,|\,  \exists A_2 : A_1 \cup A_2 = A\}.
\]
The space $\Ann^{\le A}$ can also be described as the space of embeddings $S^1\hookrightarrow A$ that split it into two annuli $A_1$ and $A_2$.
For any annulus $A_1\in \Ann^{\le A}$, the tangent space of $\Ann^{\le A}$ at $A_1$ is just $\cX(\partial_{in}A_1)\cong \cX(S^1)$.
In particular, the operation
\[
A_1 \cup - \,:\, \Ann^{\le A} \to \Ann^{\le A_1\cup A}
\]
does induce an isomorphism of tangent spaces.

\begin{lem}\label{Ann le A is homotopy equivalent to S^1}
For any annulus $A\in \Ann$, the space $\Ann^{\le A}$ is homotopy equivalent to a circle.
\end{lem}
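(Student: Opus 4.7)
The plan is to identify $\Ann^{\le A}$ as the total space of a principal $\Diff^+(S^1)$-bundle over a contractible base $\cS$, and then to invoke the classical deformation retraction of $\Diff^+(S^1)$ onto its subgroup of rotations. First I would observe that an element of $\Ann^{\le A}$ is determined by a pair $(\gamma, \rho)$, where $\gamma \subset A$ is an unparametrised smooth Jordan curve whose image separates $A$ into two sub-annuli, and $\rho : S^1 \to \gamma$ is an orientation-preserving parametrisation. Writing $\cS$ for the space of unparametrised such curves, the forgetful map $\Ann^{\le A} \to \cS$ becomes a principal $\Diff^+(S^1)$-bundle, with local trivialisations obtained by choosing a smoothly varying basepoint on the family of curves.

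Next I would establish that $\cS$ is contractible. The inputs are Corollary~\ref{cor: exist framings} (existence of a framing) together with its families version, Proposition~\ref{prop: exist semi-framings in families}. For each $\gamma \in \cS$, the curve $\gamma$ cobounds with $\partial_{in}A$ an inner sub-annulus $A^{\mathrm{in}}_\gamma \in \Ann$; applying Proposition~\ref{prop: exist semi-framings in families} to the tautological family $\{A^{\mathrm{in}}_\gamma\}_{\gamma \in \cS}$ yields a smoothly varying family of semi-framings whose level curves interpolate each $\gamma$ to $\partial_{in}A$. After reparametrising the interpolation parameter to have sitting instants at both endpoints, this assembles into a contracting homotopy of $\cS$ onto the point $\{\partial_{in}A\}$.

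Once $\cS$ is contractible, the principal $\Diff^+(S^1)$-bundle $\Ann^{\le A} \to \cS$ is trivialisable, giving $\Ann^{\le A} \simeq \Diff^+(S^1)$, which is homotopy equivalent to $S^1$ by the classical deformation retraction onto the rotation subgroup. The main obstacle will be the contractibility step: one must verify that $\{A^{\mathrm{in}}_\gamma\}$ really is a legitimate smooth family of annuli as $\gamma$ varies over $\cS$, particularly when $\gamma$ passes through or approaches the thin part $\partial_{in}A \cap \partial_{out}A$ of $A$, where the sub-annulus $A^{\mathrm{in}}_\gamma$ can acquire or lose thin portions of its own. The semi-framing machinery is tailored precisely to handle such degenerations, so the argument should go through, but promoting the construction from a family over $\Ann$ itself to the tautological family over $\cS$ is where the technical bookkeeping will lie.
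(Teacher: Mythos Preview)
Your proposal is correct and follows essentially the same approach as the paper: the paper's proof also quotients $\Ann^{\le A}$ by the $\Diff(S^1)$-action reparametrising the inner boundary, notes that this action is principal, invokes the contractibility of the quotient ``by the same argument as in Corollary~\ref{cor: semi annuli contractible}'' (i.e., via the semi-framing-in-families machinery you cite), and concludes from $\Diff(S^1)\simeq S^1$. Your concern about promoting the semi-framing section from the universal family over $\Ann$ to the tautological family over $\cS$ is legitimate but minor --- the semi-framing construction of Proposition~\ref{prop: exist semi-framings in families} only uses the inner-boundary parametrisation, so it applies without change to annuli whose outer boundary is unparametrised.
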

\begin{proof}
The quotient of $\Ann^{\le A}$ by the action of $\Diff(S^1)$ which reparametrises the inner boundary is contractible by the same argument as in Corollary~\ref{cor: semi annuli contractible}.
The result follows since the above action is principal, and $\Diff(S^1)$ is homotopy equivalent to $S^1$.
\end{proof}

Using the trivialisation of the tangent bundle of $\Ann^{\le *}$, we define a vector field to be a smooth map (in the diffeological sense) from $\Ann^{\le *}$ to $\cX(S^1)$.
Similarly, a $k$-form is a smooth map from $\Ann^{\le *}$ to the continuous dual of the $k$-th exterior power of $\cX(S^1)$.



\begin{defn}
A vector field / differential form on $\Ann^{\le \ast}$ is called \emph{left invariant} if it is invariant under the two maps
\begin{gather*}
\Ann^{\le A}\hookrightarrow \Ann^{\le A\cup B}: A_1\mapsto A_1,\,\quad\text{and}
\\
\Ann^{\le B}\hookrightarrow \Ann^{\le A\cup B}: A_1\mapsto A\cup A_1
\end{gather*}
for all $A,B\in\Ann$ (these maps all induce isomorphisms of tangent spaces).
\end{defn}

Since $\Ann^{\le \ast}$ has trivial tangent bundle,
given an element of $\cX(S^1) = T_1\Ann$, we may consider the corresponding left-invariant vector field on $\Ann^{\le \ast}$. 

Let $\cJ \subset C^\infty(S^1, \bbC)$ be the space of parametrised Jordan curves. If $A$ is an annulus, then an embedding of $A$ into $\bbC$ provides an embedding
\begin{equation}\label{eq: emb into J}
\Ann^{\le A}\hookrightarrow \cJ\subset C^\infty(S^1, \bbC): A_1\mapsto \partial_{in}A_1
\end{equation}
In terms of the embedding \eqref{eq: emb into J}, the left-invariant vector field associated to the function $f\in C^\infty(S^1,\bbC)\cong \cX(S^1)$ is the vector field on $\cJ$ which assigns to a curve $\gamma(\theta)\in \cJ$ the element $f\gamma_\theta\in C^\infty(S^1,\bbC) = T_\gamma\cJ$, where as usual $\gamma_\theta$ denotes the derivative of $\gamma$ in the variable $\theta$.
The Lie bracket of vector fields on $\cJ$ is given by the well known formula
\[
\big[f,g\big](\gamma)
:=
\tfrac{d}{dt}\big|_{t=0}\,
g(\gamma+tf)
-
\tfrac{d}{dt}\big|_{t=0}\,
f(\gamma+tg).
\]
Specializing to left-invariant vector fields, this Lie bracket becomes:
\[
[f\gamma_\theta,
g\gamma_\theta]
=
\tfrac{d}{dt}\big|_{t=0} \Big(
g\cdot( \gamma+tf\gamma_\theta)_\theta
\Big)
-
\tfrac{d}{dt}\big|_{t=0} \Big(
f\cdot( \gamma+tg\gamma_\theta)_\theta
\Big)
= (gf_\theta-fg_\theta)\gamma_\theta .
\]
Left-invariant vector fields are closed under the Lie bracket, 
and their Lie bracket agrees with
the Lie bracket on $\cX(S^1)$ provided by the opposite of the usual Lie bracket of vector fields. This is exactly the Lie bracket on $\Witt$.


Every element of $\big(\bigwedge^k\cX(S^1)\big)^*$ gives rise to a left-invariant $k$-form on $\Ann^{\le \ast}$. We will show that, when restricted to left-invariant forms on $\Ann^{\le \ast}$, the de Rham differential agrees with the differential that computes the Lie algebra cohomology of the Witt algebra. Indeed, recall that the formula
\begin{align}\label{eqn:Cartan-Eilenberg}
(df)(x_0, \ldots, x_{k}) &= \sum_i (-1)^{i} x_i f(x_0, \ldots, \hat x_i, \ldots, x_{k})\\ &+ \sum_{i<j} (-1)^{i+j} f([x_i,x_j],x_0, \ldots, \hat x_i, \ldots, \hat x_j, \ldots, x_{k})\nonumber
\end{align}
for the differential which computes Lie algebra cohomology, is virtually identical with the formula
\begin{align}\label{eqn:de Rahm}
d\omega(V_0,\ldots,V_k) &=
\sum_i(-1)^{i} d_{V_i}(\omega(V_0,\ldots,\widehat{V_i},\ldots,V_k))\\
&+ \sum_{i<j}(-1)^{i+j}\omega([V_i,V_j],V_0\ldots,\widehat{V_i},\ldots,\widehat{V_j},\ldots,V_k)\nonumber
\end{align}
for the de Rham differential.
In our case, the Lie algebra cohomology is taken with trivial coefficients, which simplifies the right hand side of \eqref{eqn:Cartan-Eilenberg} to $\sum_{i<j} (-1)^{i+j} f(\allowbreak[x_i,x_j],\allowbreak x_0, \ldots, \hat x_i, \ldots, \hat x_j, \ldots, x_{k})$. Similarly, when the $V_i$ are left invariant vector fields, the right hand side of \eqref{eqn:de Rahm} simplifies to $\sum_{i<j}(-1)^{i+j}\omega([V_i,V_j],V_0\ldots,\widehat{V_i},\allowbreak\ldots, \widehat{V_j},\ldots,V_k)$. The two differentials \eqref{eqn:Cartan-Eilenberg} and \eqref{eqn:de Rahm} agree on the nose, so Lie algebra $k$-cocycles map to closed left invariant $k$-forms on $\Ann^{\le \ast}$.


Recall the Virasoro cocycle:
\begin{equation}\label{def: omega_Vir}
\omega_{Vir}\big(f(z)\tfrac{\partial}{\partial z},g(z)\tfrac{\partial}{\partial z}\big)
= \tfrac 1{12}\int_{S^1}\tfrac{\partial^3 f}{\partial z^3}(z)\,g(z)\,\tfrac{dz}{2\pi i}.
\end{equation}
Letting $\underline{\omega_{Vir}\!}\,$ be the closed left invariant $2$-form on $\Ann^{\le \ast}$
associated to the Virasoro cocycle, we define
\[
\tAnn:=\left\{(A,\gamma,a)\,\left|\,\begin{matrix}
A\in\Ann,\,a\in \bbC\,
\\\gamma:[0,1]\to \Ann^{\le A},\,\,\,\,\\ \gamma(0)=1,\,\gamma(1)=A\,\,\;\end{matrix}\right\}\right/\begin{matrix}
\textstyle\big(\gamma,a\big)\sim\big(\gamma',a + \int_k\underline{\omega_{Vir}}\big),\\[.5mm]
\text{$k$ {a homotopy from} $\gamma$ {to} $\gamma'$}
\end{matrix}
\] 
In future work, we will show that every unitary positive energy representations of the Virasoro algebra exponentiate to representations of $\tAnn$.
Representations of $\Vir$ with central charge $c$ (i.e. representations that map $C$ to the scalar $c\in\bbR_{\ge0}$) correspond to representations of $\tAnn$ where the central $\bbC\subset \tAnn$ acts by $z\mapsto e^{cz}$. 
Such representations descend to representations of the following semigroup:
\[
\tAnn_c:=\left\{(A,\gamma,a)\,\left|\,\begin{matrix}
A\in\Ann,\,a\in \bbC^\times\,                     
\\\gamma:[0,1]\to \Ann^{\le A},\,\,\,\,\\ \gamma(0)=1,\,\gamma(1)=A\,\,\;\end{matrix}\right\}\right/\begin{matrix}
\textstyle\big(\gamma,a\big)\sim\big(\gamma',a \cdot \exp(c {\int_k\underline{\omega_{Vir}\!}})\,\big),\\[.5mm]
\text{$k$ {a homotopy from} $\gamma$ {to} $\gamma'$}
\end{matrix}
\]
The semigroup $\tAnn_c$ may be in fact defined for any $c \in \bbC$.
When $c\neq0$ it is the quotient of $\tAnn$ by the central subgroup $(2\pi i/c)\bbZ\times\{0\}\subset \bbC\times\bbZ$, and when $c=0$ it is the product $\bbC^\times$ with the universal cover of $\Ann$.

\begin{prop}\label{prop: central extension}
The above semigroups fit into central extensions
\begin{equation}\label{eq: Central extension by C x Z}
0\to \bbC \times \bbZ \to \tAnn \to \Ann \to 0
\end{equation}
and
\begin{equation}\label{eq: Central extension by C^x x Z}
0\to \bbC^\times \times \bbZ \to \tAnn_c \to \Ann \to 0
\end{equation}
and there is a map $\tAnn\to \tAnn_c$ which restricts to $(z,n)\mapsto (e^{cz},n)$ on the centers.
\end{prop}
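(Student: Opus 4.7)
The plan is to proceed in four stages. First, I would verify that $\tAnn$ is well-defined as a set, by showing that the integral $\int_k \underline{\omega_{Vir}}$ depends only on $\gamma$ and $\gamma'$ (not on the choice of homotopy $k$). Since $\underline{\omega_{Vir}}$ is closed (being built from a Lie algebra $2$-cocycle, as explained in the preceding discussion) and since $\Ann^{\le A} \simeq S^1$ is aspherical by Lemma~\ref{Ann le A is homotopy equivalent to S^1}, each component of the space of paths from $1$ to $A$ in $\Ann^{\le A}$ is simply connected; hence $\int_k\underline{\omega_{Vir}}$ depends only on the homotopy classes of $\gamma$ and $\gamma'$. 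Consequently the fiber of $\tAnn\to\Ann$ over $A$ is identified with $\pi_0(\text{path space}) \times \bbC$. Lemma~\ref{Ann le A is homotopy equivalent to S^1} further identifies $\pi_0(\text{path space})$ with a $\bbZ$-torsor, so after choosing a basepoint path it becomes $\bbZ$, giving the set-theoretic fiber $\bbZ\times\bbC$.

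Next, I would equip $\tAnn$ with a semigroup law by
\[
(A_1,\gamma_1,a_1)\cdot(A_2,\gamma_2,a_2)\,:=\,\bigl(A_1\cup A_2,\,\gamma_1 * (A_1\cup\gamma_2),\,a_1+a_2\bigr),
\]
where $A_1\cup\gamma_2$ denotes the path $t\mapsto A_1\cup\gamma_2(t)$ inside $\Ann^{\le A_1\cup A_2}$, and $*$ is path concatenation. Well-definedness under the equivalence relation uses the left-invariance of $\underline{\omega_{Vir}}$: if $k_1$ is a homotopy from $\gamma_1$ to $\gamma_1'$ then $k_1 * (A_1\cup\gamma_2)$ is a homotopy between the corresponding product paths, and left-invariance of $\underline{\omega_{Vir}}$ ensures $\int_{k_1*(A_1\cup\gamma_2)} = \int_{k_1}$ (the second factor being a degenerate $2$-chain). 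The argument for $\gamma_2, \gamma_2'$ is analogous. Associativity follows from associativity of $*$ up to reparametrisation, which is a homotopy with zero integral, and the $\bbC\times\bbZ$ action identifying the fibers is visibly central.

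For the diffeological structure, I would define a map $M\to\tAnn$ (from a real manifold with corners, or a complex manifold) to be smooth/holomorphic if it consists of a smooth/holomorphic map $M\to\Ann$, together with a locally smoothly/holomorphically varying choice of homotopy class of path $\gamma$, and a smoothly/holomorphically varying $a \in \bbC$. To establish local triviality in the sense of Definition~\ref{def: central extension of diffeological semigroups}, I would invoke Proposition~\ref{prop: Exp is surjective and admits local sections}: given any $A_0\in\Ann$ and a local section $U\to \cP^{ge}$ of the exponential map, each geometrically exponentiable path $X\in\cP^{ge}$ produces a framing and hence a canonical path in $\Ann^{\le A}$ from $1$ to $A$, giving a smoothly/holomorphically varying basepoint $\gamma_A$ over $U$. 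This yields a local trivialisation $U\times\bbC\times\bbZ\xrightarrow\sim \tAnn|_U$. I expect this step to be the main obstacle, as care is needed to check that the assignment $X\mapsto\gamma_A$ is compatible with both the real diffeology (based on manifolds with corners) and the complex diffeology; the latter is delicate because the complex diffeology does not see thin parts and, a priori, disconnects $\Ann$. Corollary~\ref{cor: hol family of thin is constant} nevertheless ensures that the $\bbZ$-torsor $\pi_0(\text{path space})$ is constant on connected complex families, so the $\bbZ$ component varies locally constantly.

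Finally, to obtain the second central extension \eqref{eq: Central extension by C^x x Z}, I would define $\tAnn_c$ either directly as stated in the excerpt, or as the quotient of $\tAnn$ by the kernel of the homomorphism $\bbC\times\bbZ\to\bbC^\times\times\bbZ$ given by $(z,n)\mapsto(e^{cz},n)$, which is the central subgroup $(2\pi i/c)\bbZ\times\{0\}$ when $c\ne 0$ and the trivial subgroup when $c=0$. The induced map $\tAnn\to\tAnn_c$ sending $(A,\gamma,a)\mapsto(A,\gamma,e^{ca})$ is visibly a well-defined semigroup homomorphism restricting to the claimed formula on centers, and the central-extension property of $\tAnn_c$ is inherited from that of $\tAnn$ through this quotient (or product, when $c=0$).
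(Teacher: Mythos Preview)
Your proposal contains a genuine gap at the point where you assert that ``the $\bbC\times\bbZ$ action identifying the fibers is visibly central.'' The $\bbC$-part is indeed visibly central (it merely shifts the $a$-coordinate), but the $\bbZ$-part is not. Concretely, take $k=(1,\gamma_0,0)\in K$ with $\gamma_0$ a nontrivial loop in $\Diff(S^1)=\Ann^{\le 1}$, and take $\tilde A=(A,\gamma,a)$. Then
\[
k\cdot\tilde A = \bigl(A,\,\gamma_0*\gamma,\,a\bigr),
\qquad
\tilde A\cdot k = \bigl(A,\,\gamma*(A\cup\gamma_0),\,a\bigr).
\]
Since $\pi_1(\Ann^{\le A})\cong\bbZ$ is abelian, the two paths $\gamma_0*\gamma$ and $\gamma*(A\cup\gamma_0)$ are homotopic; but for $k\tilde A=\tilde A k$ in $\tAnn$ one needs the integral of $\underline{\omega_{Vir}}$ over such a homotopy to vanish, and nothing you have said forces this. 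The paper handles exactly this point: writing $k\tilde A=z_{\tilde A,k}\,\tilde A k$ with $z_{\tilde A,k}\in\bbC$, it shows that $\tilde A\mapsto z_{\tilde A,k}$ descends to a homomorphism $\Ann^{(\infty)}\to\bbC$, and then argues that no nontrivial such homomorphism exists by invoking Thurston's theorem that $\Diff^{(\infty)}(S^1)$ is perfect, together with a density argument using the M\"obius subsemigroup. This is the substantive content of the proposition and cannot be skipped.

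A secondary remark: your stage~3 (diffeological local triviality, via local sections of the exponential map and holomorphic families) is not part of the present proposition at all. In the paper that material is deferred to the subsequent Propositions~\ref{prop: it's locally trivial} and~\ref{prop: holomorphic local lifts}; Proposition~\ref{prop: central extension} is a purely algebraic statement about the kernel and centrality. So you are simultaneously proving more than is asked here while omitting the one nontrivial step that is required.
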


\begin{proof}
Let $\Ann^{(\infty)}$ denote the universal cover of $\Ann$.
There are obvious maps $\tAnn\to \Ann^{(\infty)}$ and $\tAnn_c\to \Ann^{(\infty)}$, given by only remembering the homotopy class of the path $\gamma$.
These maps are surjective by Corollary~\ref{cor: exist framings}, as any framing $h:S^1\times[0,1]\to A$  induces a path $\gamma:[0,1]\to \Ann^{\le A}$ by the formula $\gamma(t)=h(S^1\times[0,t])\subset A$ (with the boundary parametrisations given by $h|_{S^1\times\{0\}}$ and $h|_{S^1\times\{t\}}$).

Given a lift $A^{(\infty)}\in \Ann^{(\infty)}$ of an annulus $A$, we first argue that the fiber $F$ of $\tAnn\to \Ann^{(\infty)}$ over $A^{(\infty)}$ is isomorphic to $\bbC$.
Fix a path $\gamma_0:[0,1]\to \Ann^{\le A}$ representing $A^{(\infty)}$.

Any element of $F$ can be represented by a pair $(\gamma_0,a)$.
By definition, we then have $(\gamma_0,a)\sim(\gamma_0,a+\int_k\underline\omega_{Vir}\!)$ for every homotopy $k$ from $\gamma_0$ to itself (a map $S^2\to \Ann^{\le A}$).
Since $\underline{\omega_{Vir}\!}$ is a closed $2$-form and $\Ann^{\le A}$ has trivial second homotopy group (by Lemma~\ref{Ann le A is homotopy equivalent to S^1}) the integral of $\underline\omega_{Vir}\!$ over $k$ is trivial. So the map $a\mapsto (\gamma_0,a)$ is a bijection $\bbC\to F$.
Letting $K$ be the kernel of the map $\tAnn\to \Ann$,
we therefore get a commutative diagram:
\[
\tikz[xscale=2, yscale=1.2]{
\node[scale=1] (b') at (1,1) {$\bbC$};
\node[scale=1] (c') at (2.2,1) {$K$};
\node[scale=1] (d') at (3.3,1) {$\bbZ$};
\node[scale=1] (b) at (1,0) {$\bbC$};
\node[scale=1] (c) at (2.2,0) {$\tAnn$};
\node[scale=1] (d) at (3.3,0) {$\Ann^{(\infty)}$};
\node[scale=1] (C) at (2.2,-1) {$\Ann$};
\node[scale=1] (D) at (3.3,-1) {$\Ann$};
\draw[double, double distance=1, shorten >=1, shorten <=2] (b')--(b);
\draw[->, shorten >=1, shorten <=0] (c')--(c);
\draw[->, shorten >=1, shorten <=1] (d')--(d);
\draw[->, shorten >=2, shorten <=2] (b)--(c);
\draw[->, shorten >=2, shorten <=2] (c)--(d);
\draw[->, shorten >=2, shorten <=2] (b')--(c');
\draw[->, shorten >=2, shorten <=2] (c')--(d');
\draw[double, double distance=1, shorten >=2, shorten <=2] (C)--(D);
\draw[->] (c)--(C);
\draw[->] (d)--(D);
}
\]
where the middle horizontal row and the rightmost vertical column are central extensions.

We next argue that the middle vertical column is a central extension.
We wish to show that for any element $k\in K$ and $A\in \tAnn$, we have $kA=Ak$.
Write $kA = z_{A,k} Ak$, for some unique $z_{A,k} \in K$.
By connectedness of $\tAnn$, this element $z_{A,k}$ maps to zero in $\bbZ$ and therefore lives in $\bbC \subset K$.
In particular, $z_{A,k}$ is central in $\tAnn$.
Our goal is to show that $z_{A,k}$ is trivial.
We claim that $A\mapsto z_{A,k}$ is a homomorphism $\tAnn\to \bbC$, and that it descends to a homomorphism $\Ann^{(\infty)}\to \bbC$.
To see that $A\mapsto z_{A,k}$ is a homomorphism, we compute:
\[
kA_1A_2 = (kA_1)k^{-1}(kA_2) = (z_{A_1,k} A_1k)k^{-1}(z_{A_2,k} A_2k) =  z_{A_1,k}z_{A_2,k} A_1 A_2 k
\]
from which it follows that
$z_{A_1,k}z_{A_2,k}=z_{A_1A_2,k}$.\footnote{Throughout this proof, the group law on $(\bbC,+)$ is written multiplicatively.
What is really meant by this last equation is $z_{A_1,k}+z_{A_2,k}=z_{A_1A_2,k}$.
}
This descends to a homomorphism $\Ann^{(\infty)}\to \bbC$ because the kernel of $\tAnn\to \Ann^{(\infty)}$ is central.

We finish the argument by arguing that $\Ann^{(\infty)}$ admits no non-trivial homomorphisms to $\bbC$. Indeed, the subgroup $\Diff^{(\infty)}(S^1)$ has trivial abelianisation \cite[Thm.~1]{Thurston74}, and so any homomorphism $\Ann^{(\infty)} \to \bbC$ vanishes on this subgroup. 
Similarly, the finite dimensional subsemigroup $\text{M\"ob}^{(\infty)} \subset \Ann^{(\infty)}$ of universal-cover-lifted M\"obius annuli (i.e. annuli with boundary parametrisations in $\mathit{PSL}(2,\bbC)$) admits no non-trivial homomorphisms to any abelian group, and so any homomorphism $\Ann^{(\infty)} \to \bbC$ vanishes on this subsemigroup as well.
As $\Diff^{(\infty)}$ and $\text{M\"ob}^{(\infty)}$ together generate the dense subsemigroup of (universal-cover-lifted) thick annuli, we conclude that there are no nontrivial homomorphisms $\Ann^{(\infty)} \to \bbC$. It follows that the sequence $K\to \tAnn\to \Ann$ is a central extension, and that $K\cong \bbC\times \bbZ$.

This finishes the construction of the central extension \eqref{eq: Central extension by C x Z}.
For $c\neq0$, the central extension \eqref{eq: Central extension by C^x x Z} follows from \eqref{eq: Central extension by C x Z} by taking everywhere the quotient by the central subgroup $(2\pi i/c)\bbZ\subset \bbC$. Finally, if $c=0$, the central extension \eqref{eq: Central extension by C^x x Z} is visibly trivial.
\end{proof}

The semigroups $\tAnn$ and $\tAnn_c$ admit natural real diffeological structures, and the proof of Proposition~\ref{prop: central extension} works verbatim in the real diffeological category, showing that 
$\ker(\tAnn\to\Ann)=\bbC\times \bbZ$, and $\ker(\tAnn_c\to\Ann)=\bbC^\times\times \bbZ$.

Our next task is to show that the central extensions \eqref{eq: Central extension by C x Z} and \eqref{eq: Central extension by C^x x Z} are in fact central extensions of both real and complex diffeological semigroups (Definition~\ref{def: central extension of diffeological semigroups}).
In the real case, this is the content of the following proposition:

\begin{prop}\label{prop: it's locally trivial}
The central extensions \eqref{eq: Central extension by C x Z} and \eqref{eq: Central extension by C^x x Z}
are locally trivial in the sense that for every annulus $A\in\Ann$ there exists an open neighbourhood $U\subset \Ann$ of $A$
such that the pullback to $U$ is a trivial principal bundle.
\end{prop}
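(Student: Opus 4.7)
The plan is to deduce local triviality of the extension $\tAnn \to \Ann$ from the existence of local sections of the exponential map $\cP^{ge} \to \Ann$ established in Proposition~\ref{prop: Exp is surjective and admits local sections}. First I note that it suffices to treat \eqref{eq: Central extension by C x Z}: when $c\ne 0$, the extension \eqref{eq: Central extension by C^x x Z} is its quotient by the discrete central subgroup $(2\pi i/c)\bbZ \times \{0\}$, and when $c = 0$ the extension is manifestly trivial. So fix $A_0 \in \Ann$. By Proposition~\ref{prop: Exp is surjective and admits local sections}, shrink an open neighbourhood $U$ of $A_0$ so that there is a smooth section $\sigma: U \to \cP^{ge}$ of the exponential map. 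Unpacking definitions, this is equivalent to a smooth family of framings $\{h_A: S^1 \times [0,1] \to A\}_{A \in U}$. For each $A \in U$, set
\[
\gamma_A(t) := h_A(S^1 \times [0,t]) \in \Ann^{\le A},
\]
with boundary parametrisations $h_A|_{S^1 \times \{0\}}$ and $h_A|_{S^1 \times \{t\}}$. Then $\gamma_A : [0,1] \to \Ann^{\le A}$ is a smooth path from $1$ to $A$, varying smoothly in $A$, and the assignment $s(A) := [(A, \gamma_A, 0)] \in \tAnn$ defines a local section of $\tAnn \to \Ann$.

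With $s$ in hand, the local trivialisation is the $(\bbC\times\bbZ)$-equivariant map
\[
\Phi: U \times (\bbC \times \bbZ) \longrightarrow \tAnn|_U, \qquad \Phi(A, (z, n)) := \iota(z,n) \cdot s(A),
\]
where $\iota: \bbC \times \bbZ \hookrightarrow \tAnn$ is the central inclusion. Bijectivity follows from the fact, established in Proposition~\ref{prop: central extension}, that $\tAnn \to \Ann$ is a set-theoretically principal $(\bbC\times\bbZ)$-bundle. Smoothness of $\Phi$ is immediate from smoothness of $s$, of $\iota$, and of the semigroup multiplication on $\tAnn$. For smoothness of $\Phi^{-1}$, I use the presentation of $\tAnn$ as a quotient of the space of triples $(A, \gamma, a)$: given a smooth map $M \to \tAnn|_U$ that lifts locally to a smooth family $(A_m, \gamma_m, a_m)$, the $\bbC \times \bbZ$-coordinate in the trivialisation is recovered from $a_m$, the winding of $\gamma_m$ relative to $\gamma_{A_m}$, and the integral of $\underline{\omega_{Vir}\!}\,$ over any smooth homotopy between $\gamma_{A_m}$ and $\gamma_m$ --- all of which depend smoothly on $m$.

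The principal obstacle is verifying that $s$ is smooth with respect to the quotient diffeology on $\tAnn$. This reduces to the claim that $A \mapsto (A, \gamma_A, 0)$ is smooth as a map from $U$ into the space of triples which defines $\tAnn$. By construction, each $\gamma_A(t)$ is an abstract annulus with parametrised boundary depending smoothly on $(A,t) \in U \times [0,1]$ --- this is exactly the content of the smoothness of the framing family produced by Proposition~\ref{prop: Exp is surjective and admits local sections}, and it is the one place where we genuinely use the families version of the exponential-surjectivity theorem rather than the pointwise version. All the arguments above go through verbatim with the real diffeology replaced by the complex diffeology, except at this step, where we would need a holomorphic section of the exponential map --- which we do not have, and which is why the holomorphic local triviality of $\tAnn \to \Ann$ is not addressed by this proposition.
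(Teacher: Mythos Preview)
Your proof is correct and rests on the same core idea as the paper's: use a smoothly varying family of framings to produce a smoothly varying family of paths $\gamma_A:[0,1]\to\Ann^{\le A}$, and then define the section $s(A)=[(A,\gamma_A,0)]$. The organizational difference is that the paper factors the projection as $\tAnn \to \Ann^{(\infty)} \to \Ann$, notes that the second map is a covering and hence automatically locally trivial, and then proves the stronger statement that $\tAnn \to \Ann^{(\infty)}$ is \emph{globally} trivial. For this it invokes the contractibility of $\Ann^{(\infty)}$ (Corollary~\ref{cor: semi annuli contractible}), choosing a single smooth contracting homotopy $\tilde A \mapsto \gamma_{\tilde A}$ rather than local families of framings. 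Your route via Proposition~\ref{prop: Exp is surjective and admits local sections} is more hands-on and stays over $\Ann$ throughout; the paper's route buys a cleaner description of the inverse trivialisation (no winding number to track, since everything happens over the simply connected cover) and the slightly stronger conclusion of global triviality upstairs. Both are valid, and the underlying analytic input---families of (semi-)framings---is the same.
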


\begin{proof}
The map $\Ann^{(\infty)}\to\Ann$ is a covering space, hence locally trivial.
So it is therefore enough to argue that $\tAnn\to \Ann^{(\infty)}$ is locally trivial. We will in fact show that it is globally trivial.

Since $\Ann$ is homotopy equivalent to a circle, its universal cover $\Ann^{(\infty)}$ is contractible.
As in the proof of Corollary~\ref{cor: semi annuli contractible},
the homotopy equivalence between $\Ann$ and $S^1$ may be chosen smooth.
Consequently, the contracting homotopy of $\Ann^{(\infty)}$ may also be chosen smooth.

Let $\tilde A\mapsto \gamma_{\tilde A}:\Ann^{(\infty)} \to \Ann^{[0,1]}$ be a smooth contracting homotopy.
It assigns to an annulus $A\in \Ann$ and a lift $\tilde A\in \Ann^{(\infty)}$ a smooth path $\gamma_{\tilde A}:[0,1]\to \Ann$ that represents $\tilde A$. The section
\[
s:\tilde A \mapsto (A,\gamma_{\tilde A},0) : \Ann^{(\infty)} \to \tAnn
\]
provides the desired trivialisation.

The map $\bbC\times \Ann^{(\infty)} \to \tAnn$ is given by $(z,\tilde A)\mapsto z\cdot s(\tilde A)=(A,\gamma_{\tilde A},z)$. And the inverse map $\tAnn\to \bbC\times \Ann^{(\infty)}$ sends an element $[(A,\gamma,z)]\in \tAnn$ to the pair $(z+\int_k \underline{\omega_{Vir}},\tilde A)\in \bbC\times \Ann^{(\infty)}$, where $\tilde A=[(A,\gamma)]\in \Ann^{(\infty)}$, and $k$ is a homotopy from
$\gamma$ to $\gamma_{\tilde A}$.
\end{proof}

\begin{cor}\label{COR 1}
The central extensions \eqref{eq: Central extension by C x Z} and \eqref{eq: Central extension by C^x x Z} are central extensions of real diffeological semigroups (based on manifolds with corners).
\end{cor}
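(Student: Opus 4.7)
The plan is to derive the corollary by combining the (global) smooth trivialisation constructed in Proposition~\ref{prop: it's locally trivial} with the local triviality of the covering $\Ann^{(\infty)}\to\Ann$, and checking that everything in sight respects the diffeology with corners. Concretely, I need to verify the three requirements of Definition~\ref{def: central extension of diffeological semigroups}: that $\tAnn\to\Ann$ and $\tAnn_c\to\Ann$ are homomorphisms of (real) diffeological semigroups, that their kernels are groups, and that pullbacks along smooth maps from finite-dimensional manifolds with corners into $\Ann$ are (locally) trivial principal bundles.

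First I would spell out the real diffeology on $\tAnn$. Given a smooth map $f:M\to\Ann$ with $M$ a manifold with corners, declare a lift $\tilde f:M\to\tAnn$ to be smooth iff locally it is of the form $m\mapsto(f(m),\gamma_m,a(m))$ where $m\mapsto \gamma_m$ is a smooth family of paths $[0,1]\to\Ann^{\le f(m)}$ (using the real diffeology on $\Ann^{\le*}$ inherited from its description via Jordan curves) and $a:M\to\bbC$ is smooth. The welding result Corollary~\ref{cor: Ann x Ann -> Ann is holomorphic}, together with the fact that the Virasoro cocycle integrates smoothly against smooth families of paths (it is a closed left-invariant $2$-form, hence manifestly smooth in the sense of diffeologies), shows that the semigroup operation on $\tAnn$ and the projection $\tAnn\to\Ann$ are morphisms of real diffeological semigroups; the same then holds for $\tAnn_c$. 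The kernels are $\bbC\times\bbZ$ and $\bbC^\times\times\bbZ$ by Proposition~\ref{prop: central extension}.

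For the local triviality condition, let $f:M\to\Ann$ be smooth with $M$ a manifold with corners, and fix $m_0\in M$. Since $\Ann^{(\infty)}\to\Ann$ is a covering space (in particular locally trivial in the diffeology-with-corners sense, as local sections pull back through smooth maps), there is an open neighbourhood $U\ni m_0$ and a smooth lift $\tilde f:U\to\Ann^{(\infty)}$. Composing with the smooth global section $s:\Ann^{(\infty)}\to\tAnn$ produced in the proof of Proposition~\ref{prop: it's locally trivial} yields a smooth section $s\circ\tilde f:U\to\tAnn$ of the pullback, and hence a $(\bbC\times\bbZ)$-equivariant smooth isomorphism
\[
(\bbC\times\bbZ)\times U \;\stackrel{\cong}{\longrightarrow}\; \tAnn\times_\Ann U,\qquad ((z,n),m)\mapsto (z,n)\cdot s(\tilde f(m)).
\]
That this map is a diffeomorphism of diffeological spaces (with smooth inverse) is precisely the content of the explicit formula at the end of the proof of Proposition~\ref{prop: it's locally trivial}, applied fibrewise over $U$. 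The same argument, with $\bbC$ replaced by $\bbC^\times$, handles $\tAnn_c$ for $c\ne0$; for $c=0$, $\tAnn_0=\bbC^\times\times\Ann^{(\infty)}$ is tautologically trivial.

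The only real subtlety — and the main thing to check carefully — is that the contracting homotopy of $\Ann^{(\infty)}$ used to define $s$ is genuinely smooth in the sense of the diffeology with corners (not merely the smooth manifold diffeology), so that $s\circ\tilde f$ is smooth for every smooth $\tilde f:U\to\Ann^{(\infty)}$. This is inherited from Corollary~\ref{cor: semi annuli contractible}, whose proof produces the contracting homotopy by applying the smooth-in-families semiframing of Proposition~\ref{prop: exist semi-framings in families}; since Proposition~\ref{prop: exist semi-framings in families} is established in the real diffeological setting (manifolds with corners), the resulting section $s$ and the trivialisation above are smooth in that setting, which completes the proof.
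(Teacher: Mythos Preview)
Your proposal is correct and follows essentially the same approach as the paper: the paper treats this corollary as an immediate consequence of Proposition~\ref{prop: it's locally trivial} together with the remark preceding it that $\tAnn$ and $\tAnn_c$ carry natural real diffeological structures for which the proof of Proposition~\ref{prop: central extension} goes through verbatim. You have simply unpacked in more detail what the paper leaves implicit --- in particular, the verification that the section $s:\Ann^{(\infty)}\to\tAnn$ is smooth for the diffeology with corners, which the paper handles by the same appeal to Corollary~\ref{cor: semi annuli contractible} and Proposition~\ref{prop: exist semi-framings in families}.
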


We conjecture that $\tAnn$ is a universal central extension of $\Ann$ in the category of real diffeological semigroups:

\begin{conj}
The central extension 
\[
0\to \bbC \times \bbZ \to \tAnn \to \Ann \to 0
\]
which appears in \eqref{eq: Central extension by C x Z} is a universal central extension in the category of diffeological semigroups over manifolds with corners.
\end{conj}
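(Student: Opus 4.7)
The plan is to mimic the classical proof that the Virasoro extension is the universal central extension of the Witt algebra, transferring the argument to the semigroup level via the geometric exponentiation theorem of Section 3. Given a central extension of diffeological semigroups
\[
0 \to Z \to \tilde G \to \Ann \to 0,
\]
we must construct a unique morphism $\phi:\tAnn \to \tilde G$ over $\Ann$ that restricts to a homomorphism $\bbC\times\bbZ \to Z$ on the centers. A first reduction, using that $\Ann^{(\infty)}$ is contractible (as in the proof of Proposition~\ref{prop: it's locally trivial}), decomposes the problem into one extension by the discrete group $\pi_0(Z)$ (controlled by a homomorphism $\pi_1(\Ann) \cong \bbZ \to \pi_0(Z)$), and one extension by a connected abelian Lie group, which we may further reduce to the case $Z = \bbC$ (or its real form $\bbR$).

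For existence, the plan is to differentiate and then re-integrate. By the local triviality established in Proposition~\ref{prop: it's locally trivial} and the diffeological tangent space construction of Definition~\ref{defn: tangent spaces with corners}, the extension $\tilde G \to \Ann$ induces a central extension of the cone $\cX^{in}$ by $\bbC$; since every element of $\Witt = \cX(S^1)$ is a difference of two inward-pointing fields, this extends uniquely to a central extension $\widetilde{\Witt}$ of the Witt algebra. The classical computation $H^2(\Witt,\bbC) = \bbC \cdot [\omega_{Vir}]$ and $H^1(\Witt,\bbC) = 0$ then produces a unique scalar $\lambda \in \bbC$ and a map $\Vir \to \widetilde{\Witt}$ with $C \mapsto \lambda$ that is the identity on $\Witt$. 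To lift this to the semigroup level, use Corollary~\ref{cor: exist framings}: every $A \in \Ann$ and every path $\gamma:[0,1] \to \Ann^{\le A}$ can be realized by a framing $h$, hence by a path $X \in \cP^{ge}$. Lift such a path from $\Ann$ to $\tilde G$ using local triviality, and set
\[
\phi\big([(A,\gamma,a)]\big) \;:=\; e^{\lambda a}\cdot \tilde\gamma(1).
\]
Well-definedness on the equivalence class amounts to identifying the curvature of the central extension $\tilde G \to \Ann$, viewed as a left-invariant $2$-form on $\Ann^{\le *}$, with $\lambda \cdot \underline{\omega_{Vir}}$. This identification follows from Step 3 and the comparison between the Cartan--Eilenberg differential on Witt cochains and the de Rham differential on left-invariant forms that was recorded just before the definition of $\tAnn$. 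The homomorphism property of $\phi$ is then automatic from concatenation of paths and additivity of the cocycle integral.

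For uniqueness, two morphisms $\tAnn \to \tilde G$ over $\Ann$ differ by a diffeological homomorphism $\Ann \to Z$. Pulling back along $\Ann^{(\infty)} \to \Ann$ and composing with $Z \twoheadrightarrow Z^0 = \bbC$ yields a homomorphism $\Ann^{(\infty)} \to \bbC$, which must be trivial by the exact argument used at the end of the proof of Proposition~\ref{prop: central extension}: the subgroup $\Diff^{(\infty)}(S^1)$ has trivial abelianization by Thurston, the subsemigroup $\text{M\"ob}^{(\infty)}$ admits no nontrivial homomorphisms to abelian groups, and together these generate a dense subsemigroup of thick annuli.

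The hardest step will be making rigorous the identification in the middle of the existence argument: that the curvature of an arbitrary diffeological central extension $\tilde G \to \Ann$ coincides, as a left-invariant $2$-form on $\Ann^{\le *}$, with $\lambda \cdot \underline{\omega_{Vir}}$ for some scalar $\lambda$, and that this scalar correctly controls the integration of lifts of paths. The subtlety is that $\Ann$ is not a manifold but a diffeological space with corners, the identity tangent cone $\cX^{in}$ is not a vector space, and one must check that the Poincar\'e-lemma-style argument identifying the cocycle integral with the obstruction to a section of $\tilde G$ goes through in this setting. This step is likely the technical reason the authors record this as a conjecture rather than a theorem; the supporting infrastructure (diffeological tangent spaces with corners, left-invariant forms on $\Ann^{\le *}$, and geometric exponentiability from Section 3) is already in place, but bridging between the semigroup and its infinitesimal data with the required precision remains to be executed.
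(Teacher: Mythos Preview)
The statement you are attempting to prove is a \emph{conjecture} in the paper, not a theorem: the authors do not provide a proof. Immediately after stating it, they write that the main difficulty they encountered ``had to do with assigning a Fr\'echet Lie algebra to an arbitrary central extension $Z \to G \to \Ann$,'' and suggest that the diffeological framework may not be the right setting.

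Your proposal is therefore not to be compared against a proof in the paper --- there is none. That said, your sketch is a reasonable outline of how such an argument ought to go, and you correctly identify the crux. The step you flag as hardest --- extracting from an arbitrary diffeological central extension $\tilde G \to \Ann$ a well-defined curvature $2$-cocycle on $\Witt$, so that the classical computation $H^2(\Witt,\bbC) = \bbC\cdot[\omega_{Vir}]$ can be invoked --- is essentially the same obstacle the authors name: one needs to produce, from $\tilde G$, genuine infinitesimal data (a Fr\'echet Lie algebra extension, or equivalently a left-invariant closed $2$-form) in a way that is guaranteed by the diffeological axioms alone. Nothing in the paper's setup ensures that an arbitrary $Z$-principal bundle over $\Ann$ in the diffeological sense has enough regularity for this differentiation step to succeed, particularly since the tangent ``space'' at the identity is only the cone $\cX^{in}$ and $\Ann$ is not a manifold. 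Your uniqueness argument, by contrast, is solid and does follow directly from the perfectness arguments already used in the proof of Proposition~\ref{prop: central extension}.

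In short: your outline is the natural strategy, and your diagnosis of the gap matches the authors'. But the gap is real, and closing it is exactly what remains open.
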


The main difficulty that we encountered while trying to prove this conjecture had to do with assigning a Fr\'echet Lie algebra to an arbitrary central extension
$Z \to G \to \Ann$. We suspect that the formalism of diffeological semigroups over manifolds with corners might not be the most suitable one for addressing this kind of question.

Our next goal is to show that both $\tAnn$ and $\tAnn_c$ have natural complex diffeologies, and that
\eqref{eq: Central extension by C x Z} and \eqref{eq: Central extension by C^x x Z} are central extensions of complex diffeological semigroups.
We begin by describing the complex diffeology on $\tAnn$.

\begin{defn}\label{def: holomorphic family of framings}
Let $M$ be a finite dimensional complex manifold, and let $\pi:\underline A \to M$ be a holomorphic family of annuli. A holomorphic family of framings of $\underline A$ is a smooth map $h : M \times S^1 \times [0,1] \to \underline A$ such that for every $m \in M$ the restriction of $h$ to $\{m\} \times S^1 \times [0,1]$ is a framing of $\pi^{-1}(m)$, and for every point $(\theta,t)\in S^1 \times [0,1]$ the restriction of $h$ to $M \times \{(\theta,t)\}$ is holomorphic.
\end{defn}

Let $f:M\to \Ann$ be a holomorphic map, represented by a holomorphic family of annuli $\underline A \to M$.
If $h$ is a holomorphic family of framings of $\underline A$, and $z:M \to \bbC$ is a holomorphic function, then the pair $(h,z)$ induces a lift $\tilde f:M\to \tAnn$.

\begin{defn}
Let $M$ be a finite dimensional complex manifold.
We declare a map $f:M \to \tAnn$ to be holomorphic if there exists an open cover $\{M_i\}$ of $M$ such that each restriction $f|_{M_i}$ is induced by a pair $(h,z)$ as above.
\end{defn}

\begin{remark}
We suspect that every holomorphic map $f:M \to \tAnn$ is in fact represented by a globally defined pair $(h,z)$ as above.
\end{remark}

Our final task in this section is to argue that $\tAnn \to \Ann$ is a central extension of complex diffeological semigroups.

\begin{prop}\label{prop: holomorphic local lifts}
Let $M$ be a finite dimensional complex manifold.
Then every holomorphic map $f:M \to \Ann$ locally lifts to a holomorphic map $M \to \tAnn$.
\end{prop}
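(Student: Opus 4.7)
Plan: A holomorphic lift $\tilde f: U \to \tAnn$ of $f|_U$ on a neighbourhood $U$ of any basepoint $m_0 \in M$ is determined by a pair $(h,z)$ where $h$ is a holomorphic family of framings of $\underline A|_U$ and $z: U \to \bbC$ is holomorphic. Taking $z \equiv 0$, the proposition reduces to constructing a holomorphic family of framings locally around any $m_0$. If $A_{m_0}$ is completely thin, i.e.\ lies in $\Diff(S^1) \subset \Ann$, then by Corollary~\ref{cor: hol family of thin is constant} the map $f$ is locally constant near $m_0$, so any framing of $A_{m_0}$ extends as the constant family, establishing the proposition in this case. I therefore assume $\mathring A_{m_0} \ne \emptyset$, and after shrinking $M$ I represent $\underline A \to M$ as a holomorphic family of embedded annuli in $\bbC \times M$.

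For the main construction, I fix a framing $h_0: S^1 \times [0,1] \to A_{m_0}$ via Corollary~\ref{cor: exist framings}, chosen compatibly with the thin structure: concretely, produced by the welding-and-uniformisation procedure of Proposition~\ref{prop: exist semi-framings in families}, so that its restriction near $S^1 \times \{0,1\}$ matches the given boundary parametrisations $\varphi_{in/out}^{m_0}$, and its intermediate slices meet $\partial A_{m_0}$ only at thin points. The goal is to extend $h_0$ to a smooth map $h: U \times S^1 \times [0,1] \to \underline A|_U$ which is a framing for every $m \in U$ and holomorphic in $m$ for each fixed $(\theta,t)$. The construction splits according to the image of $h_0$: for $(\theta,t) \in S^1 \times \{0,1\}$ I set $h(m,\theta,t) := \varphi_{in/out}^m(\theta)$, which is holomorphic in $m$ because the family of annuli comes with holomorphically varying boundary parametrisations; for $(\theta,t) \in S^1 \times (0,1)$ with $h_0(\theta,t) \in \mathring A_{m_0}$, I take the constant extension $h(m,\theta,t) := h_0(\theta,t) \in \bbC$, which remains in $\mathring A_m$ for $m$ close to $m_0$ by continuity of the family; and for $(\theta,t) \in S^1 \times (0,1)$ with $h_0(\theta,t)$ lying in the thin locus $Z_{m_0} := \partial_{in}A_{m_0} \cap \partial_{out}A_{m_0}$, Proposition~\ref{prop holomorphic families of points} and Lemma~\ref{lem: hol family of thin is constant} force the value to be the common image $\varphi_{in}^m(\theta_1) = \varphi_{out}^m(\theta_2)$ under the $m$-independent identification $\theta \mapsto (\theta_1,\theta_2)$, which again is holomorphic in $m$. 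These three regimes are glued smoothly using a cutoff function in the $t$-variable alone; since the cutoff involves no $m$-dependence, each fixed-$(\theta,t)$ slice remains holomorphic in $m$.

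The main obstacle is verifying that the resulting $h$ is an actual framing for every $m$ in some neighbourhood $U$ of $m_0$, i.e.\ that each slice $h(m,\cdot,t): S^1 \to A_m$ is an embedding with non-negative Jacobian determinant and that the various regimes stitch together compatibly. The embedding and orientation conditions are open in the $C^\infty$-topology on $h(m,\cdot,\cdot)$, and since $h(m_0,\cdot,\cdot) = h_0$ is by construction a framing, the same holds for $h(m,\cdot,\cdot)$ for $m$ in some neighbourhood of $m_0$; the matching along the thin locus is automatic from the choice of $h_0$ via welding-uniformisation, together with the $m$-independence supplied by Lemma~\ref{lem: hol family of thin is constant}. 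Once this is in place, the pair $(h, z \equiv 0)$ defines the desired holomorphic lift $\tilde f: U \to \tAnn$. Any two such lifts differ by a holomorphic map $U \to \bbC \times \bbZ$, reflecting the central fibre of $\tAnn \to \Ann$.
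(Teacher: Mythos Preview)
Your reduction to constructing a holomorphic family of framings, and your treatment of the completely thin case via Corollary~\ref{cor: hol family of thin is constant}, are both fine and match the paper. The gap is in the main construction.

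The step ``for $h_0(\theta,t) \in \mathring A_{m_0}$, take the constant extension $h(m,\theta,t) := h_0(\theta,t)$, which remains in $\mathring A_m$ for $m$ close to $m_0$'' does not work. The annuli $A_m \subset \bbC$ move with $m$; a point of $\mathring A_{m_0}$ that is close to $\partial A_{m_0}$ (and there are such points for $t$ near $0$ or $1$, and also near the boundary of the thin locus) will generically leave $A_m$ for arbitrarily small perturbations of $m$. There is no uniform neighbourhood $U$ of $m_0$ on which the constant extension stays inside $A_m$ for all $(\theta,t)$ simultaneously. For the same reason your ``openness'' argument fails: the framing conditions (image contained in $A_m$, surjectivity onto $A_m$, non-negative Jacobian) are not conditions on $h_m$ alone but on the pair $(h_m, A_m)$, and the target is moving. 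A cutoff in $t$ alone cannot repair this, since the three regimes you describe are separated by $\theta$ (thin versus thick) as well as by $t$.

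The paper's proof tracks the moving annulus explicitly: it writes the framing as a convex combination of two holomorphically varying candidates, an affine perturbation $h^{(1)}_m$ of $h_0$ away from the thin part and a straight-line interpolation $h^{(2)}_m(\theta,t) = t\,\varphi^{out}_m(\theta) + (1-t)\,\varphi^{in}_m(\theta)$ near it. The genuine difficulty, which your sketch does not see, is verifying that $h^{(2)}_m$ satisfies the Jacobian condition near the thin locus: one must show that $\arg\bigl(\varphi^{out}_m(\theta) - \varphi^{in}_m(\theta)\bigr)$ stays small uniformly in $\theta$ as $m$ varies, even though this difference vanishes on the thin part. The paper arranges $\varphi^{out}_{m_0} - \varphi^{in}_{m_0} \in \bbR_{\ge 0}$ near the thin part (after composing with a suitable $\psi \in \Diff(S^1)$), and then controls the argument for nearby $m$ by a Schwarz--Pick estimate applied to the holomorphic square root $m \mapsto \sqrt{\varphi^{out}_m(\theta) - \varphi^{in}_m(\theta)}$, which takes values in a half-plane. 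This analytic step is the heart of the proof and is absent from your proposal.
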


\begin{proof}
For $m \in M$, we write $A_m$ for the annulus $f(m) \in \Ann$.
Fix a point $m_0 \in M$ and let $A:=f(m_0)\in\Ann$.
We assume without loss of generality that $M\subset \bbC^N$, and that $m_0=0$.
In this proof, we will construct a holomorphic family of framings of the annuli $A_m$ for $m\in U$, where $U\subset \bbC$ is a small open ball around the origin.
Such a family represents a local holomorphic lift $U \to \tAnn_c$ of our given map $f$.

When $A$ is completely thin (has empty interior), then the family $f$ is constant by Corollary~\ref{cor: hol family of thin is constant}, and there is nothing to show. We therefore assume that the thin part $Z := \partial_{in}A \cap \partial_{out}A$ 
is not the whole of $A$.

Pick an embedding $\iota:A\hookrightarrow \{z\in \bbC : \mathrm{Re} (z)\le 0\}$
such that $Z$ maps into $i\bbR$.
Let $D_1$ and $D_2$ be the closures of the two connected components of $\bbC P^1 \setminus \iota(A)$, equipped with the boundary parametrisations inherited from those of $A$.
By uniformizing the family $D_1 \cup A_m \cup D_2$, we get a family of embedding $A_m \hookrightarrow  \bbC$
that extends $\iota$.
From now on, we identify all the annuli $A_m$ with their images in $\bbC$ via the above embeddings, and we identify the total space of the family with a subset of $M\times \bbC$.
We write $Z_m\subset A_m$ for the thin part of $A_m$, and $\underline Z\subset M\times \bbC$ for the union of all the $Z_m$.

Let $\psi\in\Diff(S^1)$ be such that $\psi\circ \varphi_{out}( \theta) \in \varphi_{in}( \theta ) + \bbR$
in some open subset $W\subset \bbC$ containing $Z$, where $\varphi_{in/out}$ are the boundary parametrisations of $A$.
(More precisely, we require that the above condition holds for all $\theta\in S^1$ such that $\varphi_{in}( \theta )$ or $\psi\circ \varphi_{out}( \theta )$ lie in $W$.)
If we are able to produce a holomorphic lift $\tilde g$ of $g:=\psi \cdot f$, then for any lift $\tilde \psi \in \tDiff(S^1)$ of $\psi$, the function $\tilde \psi^{-1} \cdot \tilde g$ provides the desired holomorphic lift of $f$.
We therefore suppress the diffeomorphism $\psi$ and assume without loss of generality that
\begin{equation}\label{eq: phi in minus phi out}
\varphi_{out}( \theta ) - \varphi_{in}( \theta )\in\bbR_{\ge 0}
\end{equation}
holds true in $W$.
By shrinking $W$, we may furthermore assume that the horizontal line segment connecting $\varphi_{in}( \theta )$ and $\varphi_{out}( \theta )$ is contained in $A$.

By Corollary~\ref{cor: exist framings}, there exists a framing $h=h_0: S^1 \times [0,1] \to A \subset  \bbC$.
More precisely, the proof of Corollary~\ref{cor: exist framings} (which is really the proof of Proposition~\ref{prop: exist semi-framings}) provides a framing $h$ whose derivative is invertible on the preimage of $A \setminus Z$, and satisfies 
\[
h(\theta,t) = t \varphi_{out}(\theta) + (1-t) \varphi_{in}(\theta)
\]
in $W$. We assume that our framing $h_0$ satisfies those two properties.

Consider the following formulas:
\begin{align*}
h^{(1)}_m(\theta,t) &:= h_0(\theta,t) + t (\varphi_{m,out}(\theta)-\varphi_{0,out}(\theta)) + (1-t) (\varphi_{m,in}(\theta)-v_{0,in}(\theta)),\\
h^{(2)}_m(\theta,t) &:= t \varphi^{out}_m(\theta) + (1-t) \varphi^{in}_m(\theta),\\
\intertext{and note that}
h^{(1)}_0(\theta,t) &= h^{(2)}_0(\theta,t) = h_0(\theta,t).
\end{align*}
Since $h_0$ has invertible derivative in the complement of $W$, and having invertible derivative is an open condition, 
$h^{(1)}_m$ satisfies the condition of being a framing in the complement of $\underline W := M \times W$, after possibly shrinking $U$.
Further below, we will show that $h^{(2)}$
satisfies the condition of being a framing in a neighbourhood of $Z$, after possibly shrinking $U$.
Using a bump function $\lambda$ on $S^1$ which is zero on $\underline Z$ and $1$ on the complement of $\underline W$, we will then assemble the above `partial framings' to a globally defined function
\begin{equation}\label{eq: THE FINAL FRAMING}
h_m(\theta,t) = \lambda(\theta) h^{(1)}_m(\theta,t) + (1-\lambda(\theta))h^{(2)}_m(\theta,t)
\end{equation}
which is holomorphic as a function of $m$.
Note that this formula recovers $h_0(\theta,t)$ at $m=0$.
We claim that \eqref{eq: THE FINAL FRAMING} is a framing in a sufficiently small neighbourhood $U\subset M$ of $0$.
To check this, note that this agrees with $h^{(1)}_m(\theta,t)$ in a neighbourhod of $Z$, so we only need to worry about whether it's a framing away from the thin part. Once again, this follows from the invertibility of the derivative being an open condition,
and $h_0(\theta,t)$ having invertible derivative.

It remains to show that $h^{(2)}_m$ satisfies the condition of being a framing in a neighbourhood of the thin part of $A_m$.
Recall that $\varphi_{out}( \theta ) - \varphi_{in}( \theta )$ is assumed to be real-valued in 
a neighbourhood of the thin part, and that 
$h^{(2)}_m(\theta,t) = t \varphi^{out}_m(\theta) + (1-t) \varphi^{in}_m(\theta)$.

We will show below that for any fixed $\epsilon > 0$, upon shrinking $U$, we can arrange that
for every $x$ in our given neighbourhood of the thin part (minus the thin part itself) we have $|\arg(\varphi^{out}_m(x) - \varphi^{in}_m(x))| < \epsilon$.
Assuming this inequality, we can check that $h^{(2)}$ satisfies the condition of being a framing in a neighbourhood of the thin part:
\begin{align*}
\partial h^{(2)}/\partial t & = \varphi^{out}_m(\theta) - \varphi^{in}_m(\theta) \in e^{\pm i \varepsilon}\cdot \bbR_{\ge 0}
\\\partial h^{(2)}/\partial \theta & \approx i.
\end{align*}
Hence
\[
-\frac{\partial h^{(2)}/\partial t}{\partial h^{(2)}/\partial \theta} \in e^{\pm i 2\varepsilon}\cdot i \bbR_{\ge 0}
\]

It remains to check that, for any fixed $\epsilon > 0$, we can shrink $U$ so as to achieve $|\arg(\varphi^{out}_m(\theta) - \varphi^{in}_m(\theta))| < \epsilon$.
We assume wlog that $U$ is a ball centered at $0 \in \bbC^N$.

Recall that by assumption $\varphi_0^{out} - \varphi_0^{in}$ is $\bbR_{\ge 0}$-valued whenever $\varphi_0^{in/out}$ take values in $W$.
Without loss of generality, we assume that $W$ is a finite union of rectangles of the form $(-\epsilon,\epsilon) \times (a,b)$.
By suitably shrinking $U$, we may find functions 
\[
f^{in}_m, f^{out}_m:(a,b) \to (-\epsilon,\epsilon)
\]
such that $f^{in}_m\le f^{out}_m$, and such that the graphs $\Gamma_{f^{in}_m}$ and $\Gamma_{f^{out}_m}$ are contained in $\partial_{in}A_m$ and $\partial_{out}A_m$, respectively.
The inequality $f_m^{in} \le f_m^{out}$ implies that for any points $z_1\in\Gamma_{f_m^{out}}$ and $z_2\in\Gamma_{f_m^{in}}$
we have $z_1 - z_2 \not\in \bbR_{<0}$.
Thus, whenever $\varphi_m^{in}(\theta)$ and $\varphi_m^{out}(\theta)$ both lie in $W$, we have 
\[
\varphi^{out}_m(\theta)-\varphi^{in}_m(\theta)\not\in\bbR_{<0}.
\]
The function $m \mapsto \arg(\varphi^{out}_m(\theta) - \varphi^{in}_m(\theta))$ takes its values in $(-\pi, \pi)$, so
\begin{align*}
&\Phi_\theta:\,U\,\to\, \bbC\\[-1mm]
&\Phi_\theta(m):=\sqrt{\varphi^{out}_m(\theta) - \varphi^{in}_m(\theta)}
\end{align*}
is a holomorphic function with values in the right half-plane $\{z \in \bbC \, : \, \mathrm{Re}(z) \ge 0\}$.
Recall that we are trying to show that for any fixed $\varepsilon > 0$, there exists a small enough ball $U$ around $0$ such that for all $m \in U$ we have $ |\arg(\varphi^{out}_{m}(\theta) - \varphi^{in}_{m}(\theta))| < \varepsilon$ whenever $\varphi^{in/out}_m(\theta)$ lie in $W \setminus Z$.

Let $x\in \bbR_+$ be any point on the positive real line.
Given $\varepsilon > 0$, let $r=r(\varepsilon) > 0$ be such that the hyperbolic disc in the right half-plane centered around $x$ of hyperbolic radius $r$ is contained in the cone $\{z \in \bbC : |\arg(z)| < \varepsilon/2\}$.
Note that $r$ does not depend on $x$ by the scale invariance of the hyperbolic metric.
We have $\Phi_\theta(0)\in\bbR_{\ge0}$ by \eqref{eq: phi in minus phi out}. So, by the Schwarz-Pick lemma, the restriction of $\Phi_\theta$ to $rU\subset U$ takes its values in $\{z \in \bbC : |\arg(z)| < \varepsilon/2\}$, and its square
$\varphi^{out}_{m}(\theta) - \varphi^{in}_{m}(\theta)$ takes its values in $\{z \in \bbC : |\arg(z)| < \varepsilon\}$, as required.
\end{proof}

We believe that the proof of Proposition~\ref{prop: holomorphic local lifts} could be adapted to the case where $M$ is an infinite dimensional complex manifold.
Sadly, this falls short of proving that $\tAnn\to \Ann$ admits local holomorphic sections, because $\Ann$ is not a manifold.
Conveniently, our formulation of Definition~\ref{def: central extension of diffeological semigroups} 
does not require us to prove the existence of local holomorphic sections in order to get the following corollary:

\begin{cor}\label{COR 2}
The central extensions \eqref{eq: Central extension by C x Z} and \eqref{eq: Central extension by C^x x Z} are central extensions of complex diffeological semigroups.
\end{cor}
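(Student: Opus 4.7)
The plan is to verify the three requirements of Definition~\ref{def: central extension of diffeological semigroups} in the complex case. That $\tAnn \to \Ann$ is a morphism of complex diffeological semigroups is nearly tautological: a pair $(h,z)$ presenting a holomorphic map $M \to \tAnn$ has an underlying holomorphic family of annuli $\underline A \to M$ giving the projected map, and the semigroup law on $\tAnn$ is the concatenation of families of framings together with addition of the $\bbC$-valued functions, both of which preserve holomorphicity. Proposition~\ref{prop: central extension} already identifies the kernel as the group $Z = \bbC \times \bbZ$, so only the local triviality remains.

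Given a holomorphic $f:M\to\Ann$ and a point $m\in M$, I would apply Proposition~\ref{prop: holomorphic local lifts} to obtain a neighborhood $U\subset M$ of $m$ together with a holomorphic lift $\tilde f:U\to\tAnn$. This produces the trivialization
\[
\phi: Z \times U \longrightarrow \tAnn \times_\Ann U, \qquad (\zeta, u) \mapsto \bigl(\zeta \cdot \tilde f(u),\, u\bigr),
\]
which is manifestly a $Z$-equivariant bijection, and which is holomorphic because it factors as $\mathrm{id}_Z\times \tilde f$ followed by the holomorphic semigroup multiplication of $\tAnn$. The substantive task is then to verify that $\phi^{-1}$ is also holomorphic in the diffeological sense.

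Equivalently, I need to show that for any two holomorphic lifts $g_1,g_2:N\to\tAnn$ of the same holomorphic map $N\to\Ann$, the unique $\zeta:N\to Z$ satisfying $\zeta(n)\cdot g_1(n)=g_2(n)$ is holomorphic. Working locally on $N$, represent $g_i$ by a pair $(h_i,z_i)$ consisting of a holomorphic family of framings of the common family of annuli and a holomorphic function $N\to\bbC$. The $\bbZ$-component of $\zeta$ is locally constant (being the difference of homotopy classes of the two associated paths in $\Ann^{\le A}$), and after absorbing this into a reparametrization I may assume the two paths are homotopic. The $\bbC$-component is then $(z_2-z_1)(n) - \int_{k_n}\underline{\omega_{Vir}\!}$ for any smooth choice of homotopy $k_n$ between the paths.

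The main obstacle is showing this last integral varies holomorphically in $n$. The plan is to exploit that the two holomorphic families of framings jointly produce a holomorphic family of $2$-chains $k_n$ in $\Ann^{\le A}$ (for instance, by a linear interpolation between the families of framings, projected to $\Ann^{\le A}$), and that $\underline{\omega_{Vir}\!}$ is a holomorphic $2$-form on $\Ann^{\le\ast}$ since it is the left-invariant form associated to the complex-linear Virasoro cocycle on $\cX(S^1)$. The integral of a holomorphic $2$-form against a holomorphic family of $2$-chains is holomorphic in the family parameter, which completes the verification. The statement for $\tAnn_c$ then follows from the statement for $\tAnn$ by taking the quotient by the central subgroup $(2\pi i/c)\bbZ\times\{0\} \subset \bbC\times\bbZ$ (or trivially when $c=0$), since quotients by closed central subgroups preserve the property of being a central extension of complex diffeological semigroups.
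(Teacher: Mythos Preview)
Your argument is essentially correct and is more detailed than the paper's, which simply records the statement as an immediate corollary of Proposition~\ref{prop: holomorphic local lifts} without spelling out the verification that a local holomorphic lift $\tilde f$ actually yields a $Z$-equivariant \emph{diffeological} isomorphism $Z\times U\cong\tAnn\times_{\Ann}U$. You correctly isolate the only nontrivial point---holomorphicity of the inverse, equivalently holomorphicity of the map $n\mapsto\int_{k_n}\underline{\omega_{Vir}\!}$---and trace it back to the complex-bilinearity of the Virasoro cocycle~\eqref{def: omega_Vir}. This is the right idea, and is presumably what the paper tacitly relies on.

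One point deserves tightening. Your proposed construction of the homotopy $k_n$ via ``linear interpolation between the families of framings'' is not well-defined as stated: a convex combination $sh_1+(1-s)h_2$ of two framings of an embedded annulus need not land in the annulus, nor satisfy the Jacobian condition. What you actually need is a family of homotopies between the two paths $t\mapsto h_i^n(S^1\times[0,t])$ in $\Ann^{\le A_n}$ that depends holomorphically on $n$. One clean way around constructing this explicitly: the map $n\mapsto\int_{k_n}\underline{\omega_{Vir}\!}$ is already smooth by the real case (Corollary~\ref{COR 1}), so it suffices to check that its derivative is complex-linear. Using formula~\eqref{eqn:de Rahm} for $d\omega$ together with the fact that both $h_1$ and $h_2$ are holomorphic families (Definition~\ref{def: holomorphic family of framings}), this reduces directly to the $\bbC$-bilinearity of $\omega_{Vir}$, and no explicit homotopy is needed.
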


Assembling Corollaries~\ref{COR 1} and~\ref{COR 2}, we have shown:

\begin{thm}
The central extensions in Proposition~\ref{prop: central extension} can be naturally upgraded
to central extensions
\begin{equation}\label{eq: Central extension by C x Z diffeological}
0\to \bbC \times \bbZ \to \tAnn \to \Ann \to 0
\end{equation}
and
\begin{equation}\label{eq: Central extension by C^x x Z diffeological}
0\to \bbC^\times \times \bbZ \to \tAnn_c \to \Ann \to 0
\end{equation}
of real diffeological semigroups (based on the category of manifolds 
with corners), and also to central extensions of complex diffeological semigroups.
\end{thm}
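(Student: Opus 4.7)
The plan is to assemble the two corollaries immediately preceding this statement. Proposition~\ref{prop: central extension} already provides the short exact sequences \eqref{eq: Central extension by C x Z diffeological} and \eqref{eq: Central extension by C^x x Z diffeological} as sequences of semigroups with the indicated kernels, so what remains is to verify the two conditions of Definition~\ref{def: central extension of diffeological semigroups} in both the real and the complex diffeological categories: the quotient maps must be morphisms of diffeological semigroups, and for every smooth (resp.\ holomorphic) map from a finite-dimensional real manifold with corners (resp.\ finite-dimensional complex manifold) into $\Ann$, the pulled-back extension must be locally trivial as a principal $Z$-bundle with $Z=\bbC\times\bbZ$ or $Z=\bbC^\times\times\bbZ$.

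For the real case I would simply cite Corollary~\ref{COR 1}, whose proof rests on Proposition~\ref{prop: it's locally trivial}. The content there is that $\tAnn \to \Ann^{(\infty)}$ is in fact globally trivial, produced by pushing a smooth contracting homotopy of $\Ann^{(\infty)}$ (obtained from the smooth version of the homotopy equivalence $\Ann \simeq S^1$ used in Corollary~\ref{cor: semi annuli contractible}) through the construction of $\tAnn$, while $\Ann^{(\infty)} \to \Ann$ is a covering and hence locally trivial. Compatibility of the semigroup law with the real diffeology is inherited from Corollary~\ref{cor: Ann x Ann -> Ann is holomorphic}, since the law on $\tAnn$ combines concatenation of paths in $\Ann^{\le *}$ with addition of the scalar correction $\int_k \underline{\omega_{Vir}\!}$, both of which are smooth in families.

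For the complex case I would cite Corollary~\ref{COR 2}, whose substance is Proposition~\ref{prop: holomorphic local lifts}: every holomorphic map $M \to \Ann$ from a finite-dimensional complex manifold admits, locally on $M$, a holomorphic lift to $\tAnn$. Such a local lift, together with the free $Z$-action, trivialises the pullback $M \times_{\Ann} \tAnn$ as a principal $(\bbC \times \bbZ)$-bundle, which is precisely the local triviality condition of Definition~\ref{def: central extension of diffeological semigroups} in the complex setting. Holomorphicity of the multiplication on $\tAnn$ follows from the fact that welding two holomorphic families of framings produces another such family via Proposition~\ref{prop: conformal welding of annuli --- families}, combined with the holomorphic dependence of $\int_k \underline{\omega_{Vir}\!}$ on its parameters.

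The main obstacle in the whole argument is not the assembly step carried out here but the proof of Proposition~\ref{prop: holomorphic local lifts} upstream; once it and Proposition~\ref{prop: it's locally trivial} are in place, the present theorem is purely formal. The extension to $\tAnn_c$ is then handled by observing that for $c \neq 0$ it is the quotient of $\tAnn$ by the central discrete subgroup $(2\pi i/c)\bbZ \times \{0\}$ and so inherits both diffeologies from $\tAnn$, while for $c=0$ the extension splits as $\bbC^\times$ times the universal cover of $\Ann$, so both diffeological structures are again evident.
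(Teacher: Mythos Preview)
Your proposal is correct and follows the paper's approach exactly: the paper's own ``proof'' is the single sentence ``Assembling Corollaries~\ref{COR 1} and~\ref{COR 2}, we have shown:'' followed by the theorem statement, so this is a purely formal assembly step. Your additional remarks on why the semigroup law on $\tAnn$ respects the two diffeologies and on how $\tAnn_c$ inherits its structure are more explicit than anything the paper writes at this point, but they are consistent with the earlier material and fill in details the paper leaves implicit.
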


\appendix

\section{Functions and forms on annuli}

Recall from \eqref{eq: def of O_A} the definition of the structure sheaf of an annulus $A \subset \bbC$.
There exists an alternative description of that sheaf which is better suited for doing differential geometry:

\begin{thm}\label{thm: alternative description of O_A}
Let $A \subset \bbC$ be an embedded annulus.
Let $U$ be an open subset of $A$ of the form $U=V \cap A$ for some open subset $V$ of $\bbC$. Then a function $f:U\to \bbC$ is in $\cO_A(U)$ if and only if $f$ is holomorphic in $\mathring U$, and there exists a smooth function $\hat f:V\to \bbC$ whose restriction to $U$ is $f$. 
\end{thm}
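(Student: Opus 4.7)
The $(\Leftarrow)$ implication is immediate: if $\hat f$ is smooth on $V$, then $f=\hat f|_U$ is continuous and restricts to smooth functions on the smooth submanifolds $\partial_{in}A\cap U$ and $\partial_{out}A\cap U$; together with the hypothesis of holomorphy on $\mathring U$ this gives $f\in \cO_A(U)$.

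For the $(\Rightarrow)$ implication my plan is to produce a smooth extension of $f$ in an open neighborhood (in $\bbC$) of each point of $V$ and then glue these local extensions with a partition of unity on $V$. Away from $A$ there is nothing to impose, and on $\mathring A\cap V$ the function $f$ is already holomorphic, hence smooth, and extends trivially to an open neighborhood. The work therefore concentrates near boundary points, which split into two regimes.

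Near a point $p\in\partial_{out}A\setminus\partial_{in}A$ (and symmetrically for $\partial_{in}A\setminus\partial_{out}A$), the annulus $A$ locally coincides with a closed domain bounded by a smooth Jordan arc. Via a local holomorphic change of coordinate straightening the boundary, one reduces to the classical statement that a continuous function on a closed half-plane, holomorphic in the interior and with smooth boundary values on the real axis, is smooth up to the boundary. I would deduce this by first extending the smooth boundary function to a smooth function on a full two-dimensional neighborhood, subtracting it from $f$ to reduce to the case of vanishing boundary values, and then applying Schwarz reflection (or, equivalently, a Cauchy integral representation) to conclude smoothness across the boundary. A standard Whitney or Seeley extension then produces a smooth extension into a two-sided neighborhood of $p$.

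The main obstacle is the analysis near a thin-part point $p\in\partial_{in}A\cap\partial_{out}A$, where the two boundary arcs are tangent at $p$ and $A$ is locally pinched. The strategy is to show that $f$ defines a Whitney jet on the closed set $A$ near $p$: at each point $q\in A$ one associates a formal Taylor polynomial of the prospective $\hat f$, obtained on $\mathring A$ from the holomorphy of $f$ and on the smooth part of $\partial A$ from the previous paragraph. At thin-part points, the jets computed along $\partial_{in}A$ and along $\partial_{out}A$ must be shown to agree, which uses (i) that the two arcs share a common tangent at such points because $\mathrm{Int}(\gamma_{in})\subseteq\mathrm{Int}(\gamma_{out})$ and they touch, (ii) continuity of $f$ on the possibly complicated locus where the two arcs coincide, and (iii) the fact that holomorphy in the interior forces the normal derivatives to be determined by the tangential ones via the Cauchy-Riemann equations. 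The technical heart of the argument is verifying the uniform Whitney compatibility estimates for these jets as $q$ ranges over $A$ near $p$; once this is done, Whitney's extension theorem supplies a smooth extension in a neighborhood of $p$, and the partition of unity on $V$ glues these local pieces into the desired global $\hat f$.
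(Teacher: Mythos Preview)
Your handling of the non-thin boundary has two slips. First, a smooth (non-analytic) Jordan arc cannot in general be straightened by a \emph{holomorphic} local change of coordinate; what one can do is map a one-sided neighborhood biholomorphically to a half-disc via the Riemann mapping theorem with smooth-boundary regularity, which is what the paper's Lemma~\ref{lem: two defs of O_H} actually uses. Second, your Schwarz-reflection step does not work as written: after subtracting a smooth $2$-dimensional extension of the boundary values, the resulting function is no longer holomorphic in the interior, so reflection is inapplicable. Your parenthetical ``Cauchy integral representation'' alternative is the salvageable route and is essentially what the paper does (via Fourier decay of the Taylor coefficients).

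The substantive gap is at the thin part. You propose to build Whitney jets at a pinch point $p\in\partial_{in}A\cap\partial_{out}A$ from the two separate boundary restrictions, using Cauchy--Riemann to promote tangential jets to full $2$-dimensional jets, and then to check the two resulting jets coincide. But showing that the tangential Taylor expansions of $f|_{\partial_{in}A}$ and $f|_{\partial_{out}A}$ agree at $p$ is precisely the hard content: the two curves may meet only at $p$ (or on a complicated closed set), the interior $\mathring A$ near $p$ is a cusp that gives no direct control, and the notion ``normal derivative determined by CR'' presupposes regularity up to the boundary that you have not yet established. The Whitney compatibility estimates you flag as ``the technical heart'' are not merely technical; absent an independent argument they are circular, since Lemma~\ref{lem: derivative} (which would give a well-defined $f'\in\cO_A$) itself relies on the theorem you are proving.

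The paper avoids this entirely. It first proves the \emph{global} case $U=A$ (Lemma~\ref{lem: alternative description of O_A(A)}) by a conformal-welding trick: add $cz$ so that $f|_{\partial_{in}A}$ becomes an embedding, weld the resulting disc to the outside of $A$, and uniformize. This converts the two-boundary problem (with its thin part) into a single smooth-boundary problem, where Lemma~\ref{lem: two defs of O_H} applies directly. The local statement then follows by covering $U$ with auxiliary embedded annuli $A_i$ whose interiors have closure inside $U$ and patching via a partition of unity. At no point is the thin locus analysed head-on.
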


This alternative description will allow us to construct the tangent and cotangent bundles of an annulus in a way that makes it obvious that they are bundles.
Before proving Theorem~\ref{thm: alternative description of O_A}, we first prove a technical lemma:

\begin{lem}\label{lem: two defs of O_H}
Let $U\subset \HH$ be an open subset of the upper half-plane, and let $f:U\to \CC$ be a continuous function such that $f|_{U\cap \mathring\HH}$ is holomorphic and $f|_{U\cap \partial\HH}$ is smooth.
Then $f$ is smooth all the way to the boundary $U \cap \partial\HH$.
\end{lem}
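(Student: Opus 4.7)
The statement is local, so fix $x_0 \in U \cap \partial\HH$ and, after shrinking, assume $\bar H_r := \{z \in \HH : |z - x_0| \le r\} \subset U$. The plan is to decompose $f$ on $\bar H_r$ via Cauchy's integral formula and show each piece extends smoothly to the diameter.

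Since $f$ is continuous on $\bar H_r$ and holomorphic on $\mathring H_r$, Cauchy's integral formula (valid in this generality by approximation from inside) gives, for $z \in \mathring H_r$,
\[
f(z) = F_1(z) + F_2(z), \qquad F_j(z) := \frac{1}{2\pi i}\int_{\Gamma_j} \frac{f(w)}{w-z}\,dw,
\]
with $\Gamma_1 = C_r^+$ the upper semicircle and $\Gamma_2 = I_r$ the diameter of $\partial H_r$. The piece $F_1$ is holomorphic on $\bbC \setminus C_r^+$, hence smooth in a neighbourhood of $x_0$.

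The work concentrates on $F_2$. Pick a cutoff $\chi \in C^\infty_c(\mathring I_r)$ with $\chi \equiv 1$ near $x_0$ and decompose $F_2 = \tilde F_2 + R$, where $\tilde F_2(z) := \frac{1}{2\pi i}\int_\bbR \frac{\psi(w)}{w-z}\,dw$ for $\psi := \chi \cdot (f|_{I_r}) \in C^\infty_c(\bbR)$, and $R$ is the Cauchy integral against $(1-\chi)(f|_{I_r})$. The remainder $R$ is holomorphic in $z$ for $z$ away from $\supp(1-\chi)$, hence smooth near $x_0$. Integration by parts in $w$ (no boundary terms, since $\psi$ is compactly supported) yields
\[
\tilde F_2^{(k)}(z) = \frac{1}{2\pi i}\int_\bbR \frac{\psi^{(k)}(w)}{w-z}\,dw \qquad \text{for every } k \ge 0,
\]
and because $\tilde F_2$ is holomorphic in $\HH$, all its real-variable partial derivatives are determined by these. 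It therefore suffices to prove that $\tilde F_2$ itself extends continuously from $\HH$ to $\bar\HH$, jointly in $z$.

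This last step is the standard Plemelj-Sokhotski limit: decomposing $\psi(w) = [\psi(w)-\psi(x)\chi_0(w)] + \psi(x)\chi_0(w)$ for a cutoff $\chi_0$ equal to $1$ near $x$ and letting $\mathrm{Im}(z) \to 0^+$, dominated convergence handles the first summand (whose integrand has a removable singularity at $w=x$) while the second is computed directly. The main obstacle is carrying through this boundary-regularity step uniformly in $x$, but this is routine once the decomposition is in place. Combining all the pieces, $f = F_1 + R + \tilde F_2$ extends smoothly to $\bar H_{r/2}$, completing the local argument.
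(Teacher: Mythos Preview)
Your argument is correct and takes a genuinely different route from the paper.

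The paper's proof conformally maps a simply connected neighbourhood $D\subset U$ of the boundary point onto the unit disc via a Riemann map $\psi:\bbD\to D$, and then argues on the disc: if $g=\psi^*f$ has smooth boundary values, integration by parts in the Cauchy formula for the Taylor coefficients shows they decay faster than any power, forcing $g$ to be smooth up to $\partial\bbD$. The two ``corner'' points where $\partial D$ meets $\partial\HH$ (at which smoothness of $g|_{\partial\bbD}$ is not immediate) are handled by first multiplying $f$ by an auxiliary holomorphic function $h\in\cO_\HH(\HH)$ with zeros of infinite order at those points, running the argument for $hf$, and then dividing out.

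Your approach stays in the half-disc and avoids conformal mapping entirely: the Cauchy decomposition $f=F_1+F_2$ isolates all the boundary behaviour into the Cauchy integral $\tilde F_2$ of a compactly supported smooth function on $\bbR$, and the identity $\tilde F_2^{(k)}(z)=\tfrac{1}{2\pi i}\int_\bbR \psi^{(k)}(w)/(w-z)\,dw$ reduces smoothness-up-to-the-boundary to the single statement that such a Cauchy integral extends continuously to $\bar\HH$ --- which is exactly the Plemelj--Sokhotski (or Plemelj--Privalov) theorem. Your approach is more direct and sidesteps the corner-point trick, at the cost of invoking the Plemelj boundary behaviour as input; the paper's approach is more self-contained once the Riemann map is in hand, since the Taylor-coefficient decay argument needs nothing beyond elementary integration by parts.
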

\begin{proof}
Let $x\in U\cap \partial\HH$ be a point, and let $D\subset U$ be a neighbourhood of $x$ which is compact, simply connected, and with smooth boundary:
\[
\tikz[scale=.7]{
\draw[fill=blue!10, dashed, rounded corners=11, rotate=180, draw=blue, thick, scale=6] (0,0) +(180:.5) -- +(200:.55) -- +(220:.5) -- +(240:.55) -- +(260:.5) -- +(280:.55) -- +(300:.55) -- +(320:.5) -- +(340:.55) -- +(360:.5);
\draw (-5,0) -- (5,0);
\draw[red, thick, yshift=1, scale=.9] (-.7,0) -- (.7,0) to[out=0, in=0, looseness=2] (0,2) to[out=180, in=180, looseness=2] (-.7,0);
\node[red, scale=.9] at (1.4*.92,1.95*.92) {$D$};
\node[blue, scale=.9] at (2.6,2.2) {$U$};
\node[scale=.9] at (4.5,2.7) {$\HH$};
\draw[yshift=1.2, blue, ultra thin, fill=white] (0,0) circle (.05) node[below] {$\scriptstyle x$};
}
\]
We also assume that $[a,b]:=\partial D\cap \partial\HH$ is a connected interval.
Let $\psi:\DD\to D$ be a uniformizing map.
The function $g:=\psi^*f$ is continuous, holomorphic in the interior of $\DD$, and smooth on the boundary,
except possibly at the preimages of the two boundary points $a$ and $b$ of $\partial D\cap \partial\HH$.

Assuming that smoothness on the boundary on the boundary had been shown everywhere,
the Taylor coefficients $a_n$ of $g(z)=\sum a_nz^n$ would satisfy
\[
\begin{split}
a_n&\textstyle =\,\tfrac1{2\pi i}\oint_{|z|=r} g(z)z^{-(n+1)}dz\qquad\quad\text{for any $r<1$}\\
&\textstyle =\,\tfrac1{2\pi i}\oint_{|z|=1} g(z)z^{-(n+1)}dz\qquad\quad\text{since $g$ is continuous}\\
&\textstyle =\,\tfrac{\pm1}{n(n-1)\ldots(n-k+1)}\cdot\tfrac1{2\pi i}\oint_{|z|=1} \big(g|_{\partial\DD}\big)^{(k)}(z){\cdot} z^{-(n-k+1)}dz\qquad \forall k\le n.
\end{split}
\]
It would follow that $|a_n|\le \tfrac1{n(n-1)\ldots(n-k+1)}\cdot\|(g|_{\partial\DD})^{(k)}\|_\infty$.
The coefficients $a_n$ would decay faster that any power of $n$, so $g(z)$ would be smooth all the way to the boundary.
The same would therefore hold for $f$ around $x$.

It remains to show that $f|_{\partial D}$ is smooth at the two boundary points of the interval $[a,b]:=\partial D\cap \partial\HH$.
Let $h\in\cO_\HH(\HH)$ be an auxiliary function with zeros of infinite order at $a$ and $b$ (for example, $h(z)=e^{-\frac{1+i}{\sqrt {z-a}}-\frac{1+i}{\sqrt {z-b}}}$).
We run the same argument as above with $\tilde f:=hf$ (the function $\tilde f|_{\partial D}$ is now smooth at $a$ and $b$ because it vanishes to infinite order), deduce that $\tilde f$ is smooth all the way to the boundary, and divide by $h$ to get the result.
\end{proof}

We next verify Theorem~\ref{thm: alternative description of O_A} in the case where $f$ is globally defined.

\begin{lem}\label{lem: alternative description of O_A(A)}
Let $A \subset \bbC$ be an embedded annulus, and let $f:A\to \bbC$ be a continuous function which is holomorphic in the interior, and smooth when restricted to each one of the two boundaries of $A$.
The there exists a smooth function $\hat f:\bbC\to \bbC$ whose restriction to $A$ is $f$. 
\end{lem}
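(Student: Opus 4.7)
The strategy is to build $\hat f$ by gluing local smooth extensions via a partition of unity. For each $p \in A$ I will produce an open neighbourhood $V_p \subset \bbC$ together with a smooth function $\hat f_p : V_p \to \bbC$ with $\hat f_p|_{V_p \cap A} = f|_{V_p \cap A}$. Given such local extensions, a locally finite subcover $\{V_p\}$ of an open neighbourhood $N$ of $A$, together with a subordinate smooth partition of unity $\{\chi_p\}$ on $N$, produces $\tilde f := \sum_p \chi_p \hat f_p : N \to \bbC$, which is smooth and restricts to $f$ on $A$: since each $\hat f_p$ agrees with $f$ on $V_p \cap A$ and $\sum_p \chi_p = 1$, we get $\tilde f|_A = f$. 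Multiplying by a bump function supported in $N$ and identically $1$ on a smaller neighbourhood of $A$ then extends $\tilde f$ to the desired $\hat f:\bbC\to\bbC$.

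For $p \in \mathring A$ we simply take $\hat f_p = f$ on a small disc inside $\mathring A$, which is smooth since $f$ is holomorphic there. For $p$ a non-thin boundary point -- lying on exactly one of $\partial_{in}A,\partial_{out}A$ -- I pick $V_p$ small enough to be disjoint from the other boundary curve, use a local biholomorphism to map $V_p$ to a disc sending $V_p \cap A$ into the closed upper half-plane $\overline\bbH$, apply Lemma~\ref{lem: two defs of O_H} to conclude the uniformised $f$ is smooth all the way down to the real axis, and invoke Seeley's extension theorem to extend across. Pulling back by the uniformiser yields $\hat f_p$.

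The main obstacle is the thin boundary case $p \in \partial_{in}A \cap \partial_{out}A$, in which the local structure of $A$ can be delicate: arcs where $\partial_{in}A$ and $\partial_{out}A$ coincide, isolated points of tangency, cusp-like regions bounded by tangent curves, or even Cantor-type thin sets accumulating at $p$. Since $\partial_{in}A$ and $\partial_{out}A$ are smooth embedded submanifolds of $\bbC$ and the restrictions $f|_{\partial_{in/out}A}$ are smooth by hypothesis, the Whitney extension theorem (or an explicit tubular-neighbourhood-plus-Seeley construction) produces smooth extensions $g_{in}$ and $g_{out}$ of these boundary restrictions to open tubular neighbourhoods $U_{in}, U_{out}$ of the respective curves. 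If $A$ is locally one-dimensional at $p$ (so that $V_p \cap A \subset \partial_{in}A = \partial_{out}A$ for some $V_p$), either $g_{in}$ or $g_{out}$ already supplies $\hat f_p$.

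Otherwise, when $\mathring A$ accumulates at $p$, I would build $\hat f_p$ by combining $g_{in}$, $g_{out}$, $f|_{\mathring A}$ (which is smooth), and the Case~(ii) extensions at nearby non-thin boundary points, via a further partition of unity on $V_p$ whose supports are chosen so that each support meets $A$ only where the corresponding extension agrees with $f$ (i.e.\ the support of the $g_{in}$-piece meets $A$ only along $\partial_{in}A$, etc.). The hard part is verifying that such a partition of unity exists in the cusp and tangency cases; this is the main technical step, and requires a careful local analysis of the geometry of $A$ (in particular, of how the thin set sits inside $\partial_{in}A$) together with the observation that the complement of $\partial_{in}A \cup \partial_{out}A$ in $V_p$ splits into connected components corresponding to $\mathring A$, the complement of $D_{out}$, and the interior of $D_{in}$, which can be separated by nested smooth bump functions.
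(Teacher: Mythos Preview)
Your local-patching strategy is fine at interior and non-thin boundary points, but there is a genuine gap at thin points where $\mathring A$ accumulates. The partition of unity you describe cannot exist: near a cusp (say $\partial_{out}A$ locally the real axis, $\partial_{in}A$ locally the graph $y=-x^2$, tangent at $p=0$), every open neighbourhood of a point of $\partial_{in}A$ close to $p$ also contains points of $\mathring A$, so no piece can have support meeting $A$ only along $\partial_{in}A$. Falling back on the Case-(ii) extensions $\hat f_q$ at non-thin boundary points $q$ near $p$ does not help either: the neighbourhoods $V_q$ must avoid the other boundary curve and hence shrink as $q\to p$, forcing the subordinate bump functions to have derivatives blowing up; the patched sum is then smooth at $p$ only if the $\hat f_q$ agree on overlaps to high enough order, i.e.\ only if $f$ already has compatible Whitney jets at $p$ from both boundaries and from the interior---which is exactly the content of the lemma. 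The ``careful local analysis'' you defer is not merely technical; as stated, the scheme is circular.

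The paper sidesteps the thin locus entirely by a global construction. Replacing $f$ by $f_1:=f+cz$ with $c\gg 0$ makes $f_1|_{\partial_{in}A}$ an embedding; one then conformally welds the disc $D_{in}$ bounded by $f_1(\partial_{in}A)$ onto the co-disc $D_{out}\supset A$ of $\partial_{in}A$, using $\phi:=f_1|_{\partial_{in}A}$ itself as the gluing map. After uniformising the resulting sphere by some $\psi:\bbC P^1_\phi\to\bbC P^1$, the subset $A\cup_\phi D_{in}$ becomes an honest disc $\hat D$ with a single smooth boundary curve (the image of $\partial_{out}A$) and \emph{no thin part}: $\partial_{in}A$ has been absorbed into the interior. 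The function $f_1$ on $A$ and the identity on $D_{in}$ agree along the seam by design, so they glue and push forward to $f_2:\hat D\to\bbC$, which is continuous, holomorphic in $\mathring{\hat D}$, and smooth on $\partial\hat D$; Lemma~\ref{lem: two defs of O_H} now applies directly to give smoothness up to the boundary, and a routine $C^\infty$ extension of $f_2$ followed by pullback through $\psi$ and subtraction of $cz$ finishes the job.
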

\begin{proof}
For $c\in\bbR_{>0}$ large enough, the restriction of $f_1(z):=f(z)+cz$ to $\partial_{in} A$ is an embedding.
Let $D_{out}\subset \CC\cup\{\infty\}$ be the closed `outer' disc bound by $\partial_{in}A$ (the one containing $\infty$),
and let $D_{in}\subset \CC$ be the closed disc bound by $f_1(\partial_{in} A)$.
Let $\phi:=f_1|_{\partial D_{out}}:\partial D_{out}\to \partial D_{in}$, and let $\bbC P^1_\phi:=D_{out}\cup_\phi D_{in}$.
Note that $A\subset D_{out}$. 
Therefore $A\cup_\phi D_{in}\subset \bbC P^1_\phi$.
\noindent
Pick an isomorphism $\psi:\bbC P^1_\phi\to \bbC P^1$.
By the Riemann mapping theorem for simply connected domains with smooth boundary,
the function $\psi|_{D_{out}}$ is holomorphic in the interior and smooth all the way to the boundary.
Let
\[
\hat D:=\psi(A\cup_\phi D_{in})\subset \CC.
\]
The function $f_1:A\to\CC$ and the inclusion map $\iota:D_{in}\hookrightarrow \CC$ assemble to a function $f_1\cup \iota:A\cup_\phi D_{in}\to \CC$.
Let
\[
f_2:=(f_1\cup \iota)\circ\psi^{-1}:\hat D\to \CC.
\]
By Lemma~\ref{lem: two defs of O_H}, the function $f_2$ is holomorphic in the interior and smooth all the way to the boundary.
Pick a $C^\infty$ extension $\hat f_2:\bbC P^1\to \CC$ of $f_2$ and let $f_3:=(\hat f_2\circ\psi)|_{D_{out}}$.
Note that $f_3$ agrees with $f_1$ on $A$, and that it is smooth all the way to the boundary.
Finally, pick a $C^\infty$ extension $\hat f_3:\CC\to\CC$ of $f_3$, and let $g(z):=\hat f_3(z)-cz$.
\end{proof}

\begin{proof}[Proof of Theorem~\ref{thm: alternative description of O_A}]
Let $f:U\to \bbC$ be as in the statement of the theorem.
Pick an open cover $\{U_i\}$ of $U$, and pick embedded annuli $U_i\subset A_i\subset \bbC$ such that $\mathrm{closure}(\mathring A_i)\subset U$.\footnote{Note that we do not require $A_i\subset A$, only $\mathring A_i\subset A$.}
Let $f_i:A_i\to \bbC$ be any function extending $f|_{\mathrm{closure}(\mathring A_i)}$ and which is smooth on the thin part of $A_i$.
By Lemma~\ref{lem: alternative description of O_A(A)}, $f_i$ admits a smooth extension $\hat f_i:\bbC\to\bbC$ to the whole of $\bbC$.
Let $\phi_i:V\to \bbR_{\ge 0}$ be a partition of unity with the property that $\mathrm{support}(\phi_i|_U)\subset U_i$.
Then $\sum \phi_i\hat f_i$ is the desired smooth extension of $f$.
\end{proof}

With the above alternative definition of the structure sheaf of an embedded annulus, we can effortlessly talk about derivatives:

Let $f:A\to\bbC$ be a holomorphic function (by which we mean a section of the sheaf~\eqref{eq: def of O_A}).
We then have the following three functions, defined on various parts of $A$:
\begin{equation}\label{eq: three defs of the derivative}
\tfrac{\partial}{\partial z}\big(f|_{\mathring A}\big):\mathring A\to \bbC,
\qquad
\tfrac{\partial}{\partial z}\big(f|_{\partial_{in}A}\big):\partial_{in}A\to \bbC,
\qquad
\tfrac{\partial}{\partial z}\big(f|_{\partial_{out}A}\big):\partial_{out}A\to \bbC.
\end{equation}
The first one is defined in the usual way.
The last two are defined using local parametrisations of $\partial_{in}A$ and of $\partial_{out}A$, as above.

\begin{lem}\label{lem: derivative}
Let $A\subset \bbC$ be an embedded annulus, $U\subset A$ open, and let $f:A\to \bbC$ be a holomorphic function (an element of $\cO_A(U)$).
Then the three functions \eqref{eq: three defs of the derivative} assemble to a holomorphic function $f'\in\cO_A(U)$.
\end{lem}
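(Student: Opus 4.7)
The question is local on $U$, so I would fix a point $p \in U$ and work in a small neighborhood. The strategy is to replace $f$ locally by a smooth function on an open subset of $\bbC$ via Theorem~\ref{thm: alternative description of O_A}, and then recognise the three derivatives in \eqref{eq: three defs of the derivative} as restrictions of a single smooth function, namely the Wirtinger $\partial/\partial z$-derivative of that extension.

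Concretely, Theorem~\ref{thm: alternative description of O_A} furnishes an open neighborhood $V \subset \bbC$ of $p$ and a smooth function $\hat f : V \to \bbC$ with $\hat f|_{V \cap A} = f|_{V \cap A}$. Set $g := \partial \hat f/\partial z$, which is smooth on $V$. Its restriction to $\mathring A \cap V$ is plainly the usual complex derivative $\partial f/\partial z$. For a boundary point $q = \gamma(\theta_0) \in \partial_{in}A \cap V$ (with $\gamma$ a local smooth parametrisation), the chain rule gives
\begin{equation*}
\tfrac{d}{d\theta}(f \circ \gamma)(\theta_0) \,=\, g(q) \cdot \gamma'(\theta_0) \,+\, \tfrac{\partial \hat f}{\partial \bar z}(q) \cdot \overline{\gamma'(\theta_0)}.
\end{equation*}
Since $\hat f$ coincides with the holomorphic function $f$ on $\mathring A \cap V$, the Wirtinger derivative $\partial \hat f/\partial \bar z$ vanishes identically on that open set; by continuity it vanishes at every boundary point in the closure of $\mathring A \cap V$. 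At such points the second term drops out, and the boundary-parametrisation definition of $\partial f/\partial z$ coincides with $g(q)$. The same reasoning applies to $\partial_{out}A$.

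With these identifications, the three functions in \eqref{eq: three defs of the derivative} assemble to $g|_{V \cap A}$. This assembled function is the restriction of a smooth function on $V$ and is holomorphic on $\mathring A \cap V$ (being the derivative of a holomorphic function), so a second application of Theorem~\ref{thm: alternative description of O_A} places it in $\cO_A(V \cap A)$. The locally constructed sections are canonically defined (independent of the choice of $\hat f$, since each of the three formulas in \eqref{eq: three defs of the derivative} is intrinsic), so they glue to a global $f' \in \cO_A(U)$.

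The main subtlety to address is the case of thin boundary points $q \in \partial_{in}A \cap \partial_{out}A$ that are \emph{not} in the closure of $\mathring A \cap V$, where the vanishing of $\partial \hat f/\partial \bar z$ at $q$ is not forced by continuity. At such a point the two boundary curves locally coincide, and the inner and outer boundary definitions both compute $(d/d\theta)(f \circ \gamma)/\gamma'(\theta)$ for the \emph{same} underlying smooth curve. This quantity is parametrisation-independent and smoothly varying, so it agrees with the values of $g$ on the surrounding non-thin points and extends smoothly across the exceptional locus, preserving the conclusion $f' \in \cO_A(U)$.
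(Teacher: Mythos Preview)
Your approach matches the paper's: extend $f$ smoothly via Theorem~\ref{thm: alternative description of O_A} and recognise the three derivatives as restrictions of $\partial_z\hat f$. The paper's proof is a single sentence doing exactly this, without any discussion of the thin-point subtlety.

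Your final paragraph correctly flags an issue the paper glosses over, and your resolution is essentially right, though the logic could be tightened. The assembled $f'$ need not equal $g=\partial_z\hat f$ at thin points isolated from $\mathring A$ (since $\partial_{\bar z}\hat f$ is unconstrained there), so the route through a second application of Theorem~\ref{thm: alternative description of O_A} using $g$ as the extension does not quite close. What does work is verifying $f'\in\cO_A(U)$ directly from the original definition \eqref{eq: def of O_A}: smoothness on each boundary circle is immediate from the formula $(f\circ\gamma)'/\gamma'$; holomorphicity on $\mathring A$ is clear; and continuity at a boundary point approached from $\mathring A$ uses precisely your observation that such a point lies in the closure of $\mathring A$, where you did establish $f'=g$. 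No comparison with $g$ is needed at thin points isolated from the interior, since those are never limits of interior points, and the inner and outer boundary formulas agree there by your parametrisation-independence remark.
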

\begin{proof}
Let $\hat f:V\to \bbC$ be a smooth function defined on some open subset $V$ of $\bbC$, whose restriction to $U$ is $f$.
Then $f'$ is the restriction to $U$ of the function $\tfrac{\partial}{\partial z}\hat f:=\tfrac12(\frac{\partial}{\partial x}-i\frac{\partial}{\partial y})(\hat f)$.
\end{proof}

If $A$ is an abstract annulus, and $f:U\to \bbC$ is a function defined on some open subset of $A$, then
its derivative $f'$ is not a well-defined function.
But given two functions $f,g:U\to \bbC$, the ratio $f'/g'$ is well defined (provided $g'$ is nowhere zero) in the sense that it doesn't depend on the embedding $A\hookrightarrow \bbC$.

\begin{defn}\label{def: tangent bundle of annulus}
The cotangent bundle $T^*A$ of an annulus is the holomorphic line bundle 
whose local holomorphic sections on some open $U\subset A$ are formal expressions of the form $fdg$, for $f,g\in \cO_A(U)$ and $g'$ nowhere zero, modulo the equivalence relation $f dg_1 \sim f(g_1'/g_2') dg_2$.

The tangent bundle $TA$ is similarly defined as the line bundle whose local holomorphic sections are expressions of the form $f\partial_g$ for $f,g\in \cO_A(U)$ and $g'$ nowhere zero, modulo the equivalence relation $f \partial_{g_1} \sim f(g_2'/g_1') \partial_{g_2}$.
\end{defn}

With this notion of tangent bundle of an annulus, and recalling the definition of tangent space of $\Ann$ given in Definition~\ref{defn: tangent spaces with corners},
we can compute the tangent space of the semigroup of annuli at a given annulus $A$ \cite[Prop 2.4]{SegalDef}:

\begin{prop}\label{prop: tangent space of Ann}
The tangent spaces of the semigroup of annuli are given by
\[
T_A\Ann
\,=\,\frac{\cX(\partial_{out}A)\oplus \cX(\partial_{in}A)}{\cX_{\mathrm{hol}}(A)}
\,=\,\frac{\cX(S^1)\oplus \cX(S^1)}{\cX_{\mathrm{hol}}(A)}.
\]
Here, $\cX(S^1)$ denotes the space of complexified vector fields on the circle,
and $\cX_{\mathrm{hol}}(A)$ denotes the space of holomorphic sections of $TA$.
\end{prop}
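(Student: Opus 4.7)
The plan is to use description \ref{enum: def 1 on Ann} of the diffeology, realizing $\Ann$ as the quotient of $\Ann^{\!\!\;\mathit{emb}} \subset C^\infty(S^1,\bbC)^{\oplus 2}$. Fix an embedded representative of $A$ with boundary parametrizations $(\varphi_{in},\varphi_{out})$. A smooth path $\gamma:[0,\infty)\to\Ann$ with $\gamma(0)=A$ lifts locally, by the quotient diffeology, to a smooth path $\tilde\gamma=(\gamma_{in},\gamma_{out}):[0,\varepsilon)\to\Ann^{\!\!\;\mathit{emb}}$. Its derivative $\tilde\gamma'(0)\in C^\infty(S^1,\bbC)^{\oplus 2}$, identified via the boundary parametrizations with an element of $\cX(\partial_{out}A)\oplus\cX(\partial_{in}A)$, defines a candidate map
\[
\Phi:\; T_A\Ann \;\longrightarrow\; \frac{\cX(\partial_{out}A)\oplus\cX(\partial_{in}A)}{\cX_{\mathrm{hol}}(A)}, \qquad [\gamma]\mapsto [\tilde\gamma'(0)],
\]
which I claim is a bijection.

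Well-definedness of $\Phi$ rests on Proposition~\ref{prop Riemann mapping theorem in families}: given two smooth lifts $\tilde\gamma_1,\tilde\gamma_2$ of the same $\gamma$, their discrepancy is witnessed by a smooth family of diffeomorphisms $f_t:\bbC\to\bbC$ with $f_0=\id$ whose restrictions to the annular region are biholomorphic, i.e., $\tilde\gamma_2(t)=f_t\circ\tilde\gamma_1(t)$. Differentiating $f_t$ at $t=0$ produces a smooth vector field on $\bbC$ whose restriction to $A$ is holomorphic and whose boundary values equal $\tilde\gamma_2'(0)-\tilde\gamma_1'(0)$. Compatibility of $\Phi$ with the relations of Definition~\ref{defn: tangent spaces with corners} reduces, after locally lifting a witnessing $\Gamma:[0,\infty)^2\to\Ann$ to $\Ann^{\!\!\;\mathit{emb}}$, to linearity of differentiation in the ambient Fr\'echet space.

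For surjectivity, given a class $[(v_{in},v_{out})]$ in the target, I would modify the representative by an element of $\cX_{\mathrm{hol}}(A)$ so that the straight-line deformation $t\mapsto(\varphi_{in}+tv_{in},\varphi_{out}+tv_{out})$ lies in $\Ann^{\!\!\;\mathit{emb}}$ for small $t\ge 0$; projecting yields a smooth path in $\Ann$ with the correct tangent vector. For injectivity, if $\Phi([\gamma])=0$, pick $w\in\cX_{\mathrm{hol}}(A)$ whose boundary restriction equals $\tilde\gamma'(0)$, extend $w$ to a smooth vector field $\hat w$ on $\bbC$, and replace the lift by $f_t^{-1}\circ\tilde\gamma(t)$, where $f_t$ is the flow of $\hat w$; the new lift has vanishing first derivative at $t=0$. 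Applying the relation $[\tilde\gamma(c\,\cdot)]=c[\tilde\gamma]$ (obtained from Definition~\ref{defn: tangent spaces with corners} by taking $\Gamma(s,t):=\tilde\gamma(s)$ and specializing to $a=c,\,b=0$) to a path with $\tilde\gamma(t)=O(t^2)$ forces $c^2[\tilde\gamma]=c[\tilde\gamma]$ for all $c\ge 0$, hence $[\tilde\gamma]=0$.

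The main difficulty is the surjectivity step when $A$ is partially thin: at touching points of $\partial_{in}A\cap\partial_{out}A$, a generic pair $(v_{in},v_{out})$ violates the condition $\mathrm{Int}(\gamma_{in})\subseteq\mathrm{Int}(\gamma_{out})$ along any straight-line perturbation, so the offending components must be absorbed into $\cX_{\mathrm{hol}}(A)$. Establishing this absorption amounts to analysing the restriction map $\cX_{\mathrm{hol}}(A)\to\cX(\partial_{out}A)\oplus\cX(\partial_{in}A)$ near the thin locus and showing that its cokernel is naturally identified with the cone of inward-pointing perturbations. A likely cleaner approach is to use the framings of Section~3: any semi-framing of $A$ produces a large family of smooth one-parameter deformations tangent to inward paths in $\cX^{in}$, and one would argue that these infinitesimal directions already generate $\cX(\partial_{out}A)\oplus\cX(\partial_{in}A)$ modulo $\cX_{\mathrm{hol}}(A)$.
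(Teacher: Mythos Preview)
Your approach differs from the paper's, and the difficulty you flag at the end is precisely what the paper's argument is engineered to avoid. Instead of working in $\Ann^{\!\!\;\mathit{emb}}$ and analysing which boundary perturbations remain admissible, the paper uses description~\ref{enum: def 2 on Ann}: it realises $\Ann$ as a subset of the genuine Fr\'echet manifold
\[
M=\{\psi_\pm:\DD_\pm\hookrightarrow\bbC P^1\ \text{holomorphic},\ \psi_-(z)=z+\mathcal O(z^{-1})\},
\]
identifies $T_A\Ann$ with $T_A M$, and computes the latter by linear algebra. Restriction to boundaries gives $s:M\to N$ (pairs of Jordan curves with winding $\pm 1$); welding the discs $D_\pm$ determined by a point of $N$ onto $A$ and uniformising gives a retraction $r:N\to\mathrm{Emb}(A)$ with $r\circ s$ constant and fibre exactly $M$. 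Differentiating yields a split short exact sequence
\[
0\to T_A M\to T N\to T\,\mathrm{Emb}(A)\to 0,
\]
with $T N=\cX(S^1)^{\oplus2}$ and $T\,\mathrm{Emb}(A)=\cX_{\mathrm{hol}}(A)$. Since $M$, $N$, $\mathrm{Emb}(A)$ are all manifolds without corners, the thin locus never enters.

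Your surjectivity gap is real, and the proposed fix of absorbing the bad part of $(v_{in},v_{out})$ into $\cX_{\mathrm{hol}}(A)$ does not work as stated: at a thin point $p\in\partial_{in}A\cap\partial_{out}A$ every $w\in\cX_{\mathrm{hol}}(A)$ satisfies $w|_{\partial_{in}}(p)=w|_{\partial_{out}}(p)$, so the first-order obstruction $v_{out}(p)-v_{in}(p)$ is invariant under subtracting $w$. What would actually rescue you is the vector-space structure of $T_A\Ann$ (write $(v_{in},v_{out})$ as a difference of two pairs each integrable to a path in $\Ann^{\!\!\;\mathit{emb}}$), but carrying this out is nontrivial. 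Your injectivity step also has a hole: the relation ``$c^2[\tilde\gamma]=c[\tilde\gamma]$'' does not follow from Definition~\ref{defn: tangent spaces with corners}. Having $\tilde\gamma'(0)=0$ in the ambient Fr\'echet space does not by itself produce a witnessing $\Gamma:[0,\infty)^2\to\Ann$; one natural candidate is $\Gamma(s,t)=s\beta(t)$ where $\tilde\gamma(t)=t\beta(t)$, but verifying that this lands in $\Ann$ again forces you to confront the corner constraints you were trying to avoid.
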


\begin{proof}
Let $M:=\{\psi_\pm:\DD_\pm\hookrightarrow \bbC P^1\,|\,\psi_\pm\text{ holomorphic }, \psi_-(z)=z+\mathcal O(z^{-1})\}$,
let $N:=N=\big\{\gamma_\pm:S^1\to \bbC\,|\,\text{$\gamma_\pm$ has winding number $\pm1$}\}$, and
let $\mathrm{Emb}(A)$ denote the space of holomorphic embeddings of $A$ into the complex plane (such that $\partial_{in}(A)$ is the boundary of the bounded component of $\bbC\setminus A$).
All three spaces are manifolds (as opposed to manifolds with boundary/corners), 
and there are two obvious maps $s:M\to N$ and $s':\mathrm{Emb}(A)\to N$ given by restriction to the boundary.

There is also a retraction $r:N\to \mathrm{Emb}(A)$ of $s'$ constructed as follows.
A pair $(\gamma_+,\gamma_-) \in N$ defines a pair of discs with parametrised boundary $D_{\pm}\subset \bbC P^1$, which we may use to form the genus zero surface $D_-\cup A\cup D_+$.
The uniformizing map $D_-\cup A\cup D_+ \to \bbC P^1$ that sends $\infty\in D_-$ to $\infty\in \bbC$ and respects the second order jets restricts to an embedding $\sigma:A\hookrightarrow\bbC P^1$.
We define $r(\gamma_+,\gamma_-)$ to be that embedding.

Note that the composite $r\circ s$ is constant, and that the fiber of $r$ over that point is exactly $M$.

Using the model of $\Ann$ given by
\[
\Ann=\big\{ \psi_\pm \in M \big |\, \psi_-(\mathring\DD_-)\cap \psi_+(\mathring\DD_+)=\emptyset \big\},
\]
we may identify the tangent space $T_A\Ann$ with the usual tangent of the manifold $M$.\\

\indent
At the level of tangent spaces, the diagram
\[
\tikz{
\node (a) at (0,0)
{$M$};
\node (b) at (2,0)
{$N$};
\node (c) at (4,0)
{$\mathrm{Emb(A)}$};
\draw[->] (a) --node[above, yshift=-2]{$\scriptstyle s$} (b);
\draw[->] (b) --node[above, yshift=-2]{$\scriptstyle r$} (c);
\draw[shorten <=-2, shorten >=-2, ->] (c) to[bend right=35]node[above, yshift=-2]{$\scriptstyle s'$} (b);
}
\]
induces a split short exact sequence
\[
0\to   T_A\Ann
\to
\Gamma\big(S^1\sqcup S^1,(\gamma_-\sqcup\gamma_+)^*T\bbC P^1\big)
\to
\Gamma_{\mathrm{hol}}\big(A,\sigma^*T\bbC P^1\big)   \to   0.
\]
The result follows since
$\Gamma(S^1\sqcup S^1,(\gamma_-\sqcup\gamma_+)^*T\bbC P^1)=
\Gamma(S^1,T_\bbC S^1)^{\oplus 2}=\cX(S^1)\oplus \cX(S^1)$,
and
$\Gamma_{\mathrm{hol}}(A,\sigma^*T\bbC P^1)=
\Gamma_{\mathrm{hol}}(A,TA)=\cX_{\mathrm{hol}}(A)$.
\end{proof}

For future purposes, we record a version of Cauchy's theorem adapted to our situation:

\begin{prop}[Cauchy's theorem]\label{prop: Cauchy's theorem}
Let $A$ be an annulus, and let $\omega$ be a holomorphic 1-form on $A$, i.e. a holomorphic section of $T^*A$.
Then
\[
\int_{\partial_{in}A}\omega\,\,=\,\,\int_{\partial_{out}A}\omega.
\]
\end{prop}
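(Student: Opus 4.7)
The plan is to reduce the theorem to ordinary Stokes' theorem on a rectangle by means of a framing $h:S^1\times[0,1]\to A$, provided by Corollary~\ref{cor: exist framings}.  Since $h|_{S^1\times\{0\}}=\varphi_{in}$ and $h|_{S^1\times\{1\}}=\varphi_{out}$, Stokes' theorem for the smooth $1$-form $h^*\omega$ on the rectangle gives
\[
\int_{\partial_{out}A}\omega \,-\, \int_{\partial_{in}A}\omega \,=\, \int_{S^1\times[0,1]} d(h^*\omega),
\]
so it suffices to prove that $d(h^*\omega)=0$.

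To compute $d(h^*\omega)$, I would embed $A\hookrightarrow\bbC$; since $dz$ globally trivialises $T^*A$ on an embedded annulus, $\omega$ takes the form $f\,dz$ for a unique $f\in\cO_A(A)$.  By Lemma~\ref{lem: alternative description of O_A(A)} we may extend $f$ to a smooth function $\tilde f$ on a neighbourhood of $A$ in $\bbC$, and a direct Wirtinger-type calculation (using $d^2h=0$) then yields
\[
d(h^*\omega) \,=\, 2i\,(\tilde f_{\bar z}\circ h)\,\det J_h\,d\theta\wedge dt,
\]
where $J_h$ is the real Jacobian of $h$ regarded as a smooth map $S^1\times[0,1]\to\bbR^2\cong\bbC$.

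Since this $2$-form is continuous, it is enough to check that it vanishes on the open dense subset $S^1\times(0,1)$.  At a point $(\theta,t)$ with $h(\theta,t)\in\mathring A$, the factor $\tilde f_{\bar z}(h(\theta,t))=f_{\bar z}(h(\theta,t))$ vanishes because $f$ is holomorphic on $\mathring A$.  At a point $(\theta,t)\in S^1\times(0,1)$ with $h(\theta,t)\in\partial A$, I would argue that $dh$ cannot have full rank: if it did, $h$ would be a local diffeomorphism at $(\theta,t)$ and its image would contain an open subset of $\bbC$ lying in $A$, and therefore in $\mathring A$, contradicting $h(\theta,t)\in\partial A$.  Hence $\det J_h(\theta,t)=0$.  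I expect this rank-deficiency observation at interior points of the rectangle mapping to $\partial A$ to be the only delicate step: it is precisely what handles the thin locus $\partial_{in}A\cap\partial_{out}A$ uniformly (where $h^{-1}(\partial A)$ can have positive measure in $S^1\times[0,1]$), and so it lets us avoid approximating $A$ by thick annuli, which would be awkward to arrange near the thin parts.
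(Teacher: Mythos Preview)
Your proof is correct and takes a genuinely different route from the paper's. The paper does not use framings here: instead it chooses a sequence of sub-annuli $A_n\subset A$ whose boundary curves converge to $\partial_{in/out}A$ and whose ``thick'' boundary arcs $C_n,C_n'$ have closures in $\mathring A$, applies the classical Cauchy theorem on each thick region to get $\int_{C_n}\omega=\int_{C_n'}\omega$, adds back the common thin arc, and passes to the limit using continuity of $\omega$.

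Your approach trades this approximation for the existence of framings (Corollary~\ref{cor: exist framings}) together with Lemma~\ref{lem: alternative description of O_A(A)}. The payoff is a clean pointwise argument: the pulled-back $2$-form factors as a product, and at every interior point of the rectangle one of the two factors vanishes --- the Cauchy--Riemann factor $\tilde f_{\bar z}\circ h$ on the preimage of $\mathring A$, and the Jacobian factor $\det J_h$ on the preimage of $\partial A$ (by your local-diffeomorphism observation). This handles the thin locus uniformly, exactly as you note, whereas the paper's approximation requires arranging smooth convergence $\partial_{in/out}A_n\to\partial_{in/out}A$ while keeping the thick boundary pieces inside $\mathring A$, which it does not spell out in detail. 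On the other hand, the paper's argument is more self-contained: it does not rely on the framing machinery, which is one of the paper's substantial constructions.
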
 
\begin{proof}
Let $A_1 \subset A_2 \subset \cdots  A_n \subset \cdots A$ be a sequence of annuli such that
$\partial_{in/out} A_n$ converges to $\partial_{in/out} A$ as smooth curves,
and such that
\[
C_n:=\partial_{in} A_n\setminus(\partial_{in} A_n\cap \partial_{out} A_n)
\qquad\text{and}\qquad
C_n':=\partial_{out} A_n\setminus(\partial_{in} A_n\cap \partial_{out} A_n)
\]
have their closures in $\mathring A$.
Intuitively, $A_n$ is obtained from $A$ by pushing the boundary of the thick part of $A$ into the interior of $A$.
Then 
\[
\begin{split}
\int_{\partial_{in}A}\!\omega=
&\lim_{n\to \infty} \int_{\partial_{in}A_n}\!\omega
=
\lim_{n\to \infty} \left[\int_{C_n}\!\omega
\,+\int_{\partial_{in} A_n\cap \partial_{out} A_n}\!\omega
\right]\\
&=\lim_{n\to \infty} \left[\int_{C_n'
}\!\omega
\,+\int_{\partial_{in} A_n\cap \partial_{out} A_n}\!\omega
\right]=
\lim_{n\to \infty} \int_{\partial_{out}A_n}\!\omega=
\int_{\partial_{out}A}\!\omega,
\end{split}
\]
where the first and last equal signs hold because $\omega$ is continuous, and
$\int_{C_n}\!\omega$ equals $\int_{C_n'}\!\omega$ by the usual Cauchy's theorem.
\end{proof}

We end this section with a description of holomorphic vector field on annuli from the perspective of framings.
Given an annulus $A$ equipped with a framing $h:S^1 \times [0,1] \to A$,
and given a holomorphic vector field $v$ on $A$ (a section of the tangent bundle $TA$), we may pull it back to a complexified vector field on $S^1 \times [0,1]$ of the form $f(\theta,t) \partial_\theta$ via the formula
\[
h^*(v)_{(\theta,t)}  :=  (d(h|_{S^1})_\bbC)^{-1} (v_{h(\theta,t)})  \partial_\theta
\]
where $d(h|_{S^1})_\bbC : \bbC \to T_{h(\theta,t)}A$ is the $\bbC$-linear extension of $d(h|_{S^1}) : \bbR = T_\theta S^1 \to T_{h(\theta,t)}A$

With an eye to future applications, we wish to characterise those vector fields $f(\theta,t) \partial_\theta$ on $S^1 \times [0,1]$ that occur as the pullback of a holomorphic vector field on $A$.

\begin{lem}\label{lem: X-holomorphic vector field}
Let $h:S^1 \times [0,1] \to A$ be a framed annulus, and let $X = -\tfrac{h_t}{h_\theta}$.
If $f(\theta,t) \partial_\theta$ is of the form $h^*(v)$ for some holomorphic vector field on $A$,
then 
\[
f_t = X_\theta f - X f_\theta.
\]
\end{lem}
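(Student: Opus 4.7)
The plan is to reduce the identity to a direct calculation using two ingredients: (i) the explicit formula for $f$ in terms of $h$ and $v$, and (ii) the fact (Theorem~\ref{thm: X hol iff hol}) that $g \circ h$ is $X$-holomorphic whenever $g$ is holomorphic on $A$.

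First, I would embed $A \subset \bbC$, so that the holomorphic vector field $v$ is of the form $v = g(z)\partial_z$ for some $g \in \cO_A(A)$. Since $d(h|_{S^1 \times \{t\}})_\bbC : \bbC \to T_{h(\theta,t)} A = \bbC$ is multiplication by $h_\theta(\theta,t)$, the pullback formula unpacks to
\[
f(\theta,t) \,=\, \frac{g(h(\theta,t))}{h_\theta(\theta,t)}.
\]
Equivalently, $f \cdot h_\theta = g \circ h$.

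Next, I would exploit two differential equations. On the one hand, since $g$ is holomorphic on $A$, Theorem~\ref{thm: X hol iff hol} gives that $g \circ h$ is $X$-holomorphic:
\[
(g \circ h)_t \,=\, -X\,(g \circ h)_\theta.
\]
On the other hand, differentiating the defining relation $h_t = -X h_\theta$ in the variable $\theta$ yields
\[
(h_\theta)_t \,=\, -X_\theta h_\theta - X (h_\theta)_\theta,
\]
i.e., $h_\theta$ satisfies the $X$-holomorphicity equation up to an inhomogeneous term $-X_\theta h_\theta$.

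Finally, I would apply the operator $\partial_t + X \partial_\theta$ to the product $f \cdot h_\theta = g \circ h$, which by the product rule gives
\[
\big[(\partial_t + X\partial_\theta) f\big]\cdot h_\theta \,+\, f \cdot \big[(\partial_t + X\partial_\theta) h_\theta\big]
\,=\, (\partial_t + X\partial_\theta)(g \circ h) \,=\, 0.
\]
Substituting $(\partial_t + X\partial_\theta) h_\theta = -X_\theta h_\theta$ and dividing by $h_\theta$ (which is nonzero because $h$ is fiberwise an embedding) yields
\[
f_t + X f_\theta - X_\theta f \,=\, 0,
\]
which is the claimed identity $f_t = X_\theta f - X f_\theta$. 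The main obstacle is essentially notational: recognising that the algebra collapses cleanly once one separates the $X$-holomorphic quantity $g\circ h$ from the inhomogeneous factor $h_\theta$ in the denominator of $f$.
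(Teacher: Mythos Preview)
Your proof is correct and follows essentially the same approach as the paper: embed $A\subset\bbC$, write $f=(g\circ h)/h_\theta$, and differentiate. The only difference is organisational: the paper expands $f_t$ and $f_\theta$ separately via the chain rule (with $v_z\circ h$ terms) and then verifies $X_\theta f - X f_\theta = f_t$ by inspection, whereas you invoke the $X$-holomorphicity of $g\circ h$ from Theorem~\ref{thm: X hol iff hol} and apply the operator $\partial_t + X\partial_\theta$ to the product $f\cdot h_\theta$, which makes the cancellation more transparent.
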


\begin{proof}
We may assume that the annulus $A$ is embedded in the complex plane. Then $f$ is given in terms of $v$ by the formula
\[
f(\theta,t) = h_\theta(\theta,t)^{-1} v(h(\theta,t))
\]
Differentiating in $t$ gives:
\begin{align*}
 f_t &= ( \partial_t[v\circ h] \cdot h_\theta - h_{\theta t} \cdot v\circ h) ) / h_\theta^2\\
   &= ( v_z\circ h \cdot h_t h_\theta +  \partial_\theta [X h_\theta] \cdot v\circ h ) / h_\theta^2\\
   &= ( - v_z\circ h \cdot X \cdot h_\theta^2 +  (X_\theta h_\theta + Xh_{\theta\theta})\cdot v\circ h ) / h_\theta^2
\end{align*}
Differentiating in $\theta$ gives:
\begin{align*}
f_\theta(\theta, t) 
&= ( \partial_\theta[v\circ h] \cdot h_\theta - h_{\theta \theta} \cdot v\circ h) / h_\theta^2\\
&= (v_z \circ h \cdot h_\theta^2 - h_{\theta\theta} \cdot v \circ h)/h_\theta^2
\end{align*}
We now check:
\begin{align*}
X_\theta f - X f_\theta
&= X_\theta \cdot v\circ h/ h_\theta - X \cdot ( v_z\circ h \cdot h_\theta^2 - h_{\theta\theta} \cdot v\circ h ) / h_\theta^2\\
&= (X_\theta \cdot v\circ h \cdot h_\theta - X \cdot ( v_z\circ h \cdot h_\theta^2 - h_{\theta\theta} \cdot v\circ h ) ) / h_\theta^2 = f_t
\end{align*}
\end{proof}

\def\lfhook#1{\setbox0=\hbox{#1}{\ooalign{\hidewidth
  \lower1.5ex\hbox{'}\hidewidth\crcr\unhbox0}}}

\end{document}